\journal{Advances in Mathematics}
\newtheorem{theorem}{Theorem}[section]
\newtheorem{uniqueness-theorem}[theorem]{Existence and Uniqueness Theorem}
\newtheorem{convergence-theorem}[theorem]{Convergence Theorem}
\newtheorem{variational-principle}[theorem]{Variational Principle}
\newtheorem{convexity-principle}[theorem]{Convexity Principle}
\newtheorem{energy-conservation-principle}[theorem]{Energy Conservation Principle}
\newtheorem{maximum-principle}[theorem]{Maximum Principle}
\newtheorem{energy-convergence-lemma}[theorem]{Energy Convergence Lemma}
\newtheorem{equicontinuity-lemma}[theorem]{Equicontinuity
Lemma}
\newtheorem{continuity-lemma}[theorem]{Equicontinuity 
Lemma}
\newtheorem{friedrichs-lemma}[theorem]{Friedrichs Inequality Lemma}
\newtheorem{limit-harmonicity-lemma}[theorem]{Lemma}
\newtheorem{laplacian-approximation-lemma}[theorem]{Laplacian Approximation Lemma}
\newtheorem{Green-identity}[theorem]{Green Identity}
\newtheorem{path-energy-estimate}[theorem]{Path Energy Lemma}
\newtheorem{projection-lemma}[theorem]{Projection Lemma}
\newtheorem{lemma-on-vertices-in-rectangle}[theorem]
{Rectangle Capacity Lemma}
\newtheorem{diameter-lemma}[theorem]{Diameter Lemma}
\newtheorem{gradient-approximation-lemma}[theorem]{Gradient Approximation Lemma}
\newtheorem{lemma}[theorem]{Lemma}
\newtheorem{corollary}[theorem]{Corollary}
\theoremstyle{definition}
\newtheorem{example}[theorem]{Example}
\newtheorem{problem}[theorem]{Problem}
\theoremstyle{remark}
\newtheorem{remark}[theorem]{Remark}
\DeclareMathOperator{\Real}{Re}
\DeclareMathOperator{\Imaginary}{Im}
\DeclareMathOperator{\Closure}{Cl}
\begin{document}

\begin{frontmatter}



\title{The boundary value problem for discrete analytic functions}


\author[add2,add3]
{M. Skopenkov\fnref{fn}}
\ead{skopenkov@rambler.ru} 
\ead[URL]{http://skopenkov.ru} 
\address[add2]{Institute for Information Transmission Problems of the Russian Academy of Sciences,\\
Bolshoy Karetny per.~19, bld.~1, Moscow, 127994, Russian Federation}
\address[add3]{King Abdullah University of Science and Technology, Saudi Arabia}
\fntext[fn]{The author was partially supported by ``Dynasty'' foundation, by the Simons--IUM fellowship, and by the President of the Russian Federation grant MK-3965.2012.1.}


\begin{abstract}
This paper is on further development of discrete complex analysis introduced by R.~Isaacs, J.~Ferrand, R.~Duffin, and C.~Mercat.
We consider a graph lying in the complex plane and having quadrilateral faces.  A function on the vertices is called discrete analytic, if for each face the difference quotients along the two diagonals are equal.

We prove that the Dirichlet boundary value problem for the real part of a discrete analytic function has a unique solution. In the case when each face has orthogonal diagonals we prove that this solution uniformly converges to a harmonic function in the scaling limit. 
This solves a problem of S.~Smirnov from 2010. 
This was proved earlier by R.~Courant--K.~Friedrichs--H.~Lewy and L.~Lusternik for square lattices, by D.~Chelkak--S.~Smirnov and implicitly by P.G.~Ciarlet--P.-A.~Raviart for rhombic lattices.

In particular, our result implies uniform convergence of the finite element method on Delaunay triangulations. This solves a problem of A.~Bobenko from 2011.
The methodology is based on energy estimates inspired by alternating-current network theory.
\end{abstract}

\begin{keyword}
discrete analytic function \sep boundary value problem \sep energy \sep alternating current
\MSC[2010] 39A12 \sep 31C20  \sep 65M60 \sep 60J45
\end{keyword}

\end{frontmatter}

\section{Introduction}\label{sec-intro}

Various constructions of complex analysis
on planar graphs were introduced by Isaacs, Ferrand, Duffin, Mercat \cite{Isaacs-41, Ferrand-44, Duffin-68, Mercat-01, Mercat-08}, Dynnikov--Novikov \cite{Dynnikov-Novikov-03}, Bobenko--Mercat--Suris~\cite{Bobenko-etal-05}, and
Bobenko--Pinkall--Springborn~\cite{Bobenko-etal-10}.
Recently this subject is developed extensively due to applications to statistical physics \cite{Smirnov-10}, numerical analysis \cite{Hilderbrandt-etal-06, Bobenko-etal-09}, computer graphics \cite{Alexa-Wardetzky-11, Wardetzky-etal-07}, and combinatorial geometry \cite{Prasolov-Skopenkov-11}; see \cite{Lovasz-04, Smirnov-10} for recent surveys.

This paper concerns \emph{linear} complex analysis on quadrilateral lattices ~\cite{Bobenko-etal-05}. A \emph{quadrilateral lattice} 
is a finite graph $Q\subset\mathbb{C}$ with rectilinear edges such that each bounded face is a quadrilateral (not necessarily convex). Depending on the shape of faces, one speaks about \emph{square}, \emph{rhombic}, 
or \emph{orthogonal} lattices (the latter is quadrilateral lattices such that the diagonals of each face are orthogonal); see Figure~\ref{fig-meshes}. Different types of lattices are required for different applications; see Section~\ref{sec-open-problems}.




\vspace{-0.1cm}
\begin{figure}[htb]
\begin{center}
\begin{tabular}{cccc}
\definecolor{uququq}{rgb}{0.25,0.25,0.25}
\begin{tikzpicture}[line cap=round,line join=round,>=triangle 45,x=0.17cm,y=0.17cm]
\clip(-0.61,-6.8) rectangle (12.8,7.24);
\draw (4,6)-- (4,-6);
\draw (8,6)-- (8,-6);
\draw (0,2)-- (12,2);
\draw (0,-2)-- (12,-2);
\draw (0,2)-- (0,-2);
\draw (12,2)-- (12,-2);
\draw (4,6)-- (8,6);
\draw (4,-6)-- (8,-6);
\draw (0,6)-- (4,6);
\draw (0,6)-- (0,2);
\draw (8,6)-- (12,6);
\draw (12,6)-- (12,2);
\draw (12,-2)-- (12,-6);
\draw (12,-6)-- (8,-6);
\draw (4,-6)-- (0,-6);
\draw (0,-2)-- (0,-6);
\fill [color=uququq] (4,6) circle (1.5pt);
\fill [color=uququq] (4,-6) circle (1.5pt);
\draw[color=black] (5.78,0.16) node {$Q$};
\fill [color=uququq] (8,6) circle (1.5pt);
\fill [color=uququq] (8,-6) circle (1.5pt);
\fill [color=uququq] (0,2) circle (1.5pt);
\fill [color=uququq] (12,2) circle (1.5pt);
\fill [color=uququq] (0,-2) circle (1.5pt);
\fill [color=uququq] (12,-2) circle (1.5pt);
\fill [color=uququq] (4,2) circle (1.5pt);
\fill [color=uququq] (8,2) circle (1.5pt);
\fill [color=uququq] (8,-2) circle (1.5pt);
\fill [color=uququq] (4,-2) circle (1.5pt);
\fill [color=uququq] (0,6) circle (1.5pt);
\fill [color=uququq] (12,6) circle (1.5pt);
\fill [color=uququq] (12,-6) circle (1.5pt);
\fill [color=uququq] (0,-6) circle (1.5pt);
\end{tikzpicture}  &
\definecolor{uququq}{rgb}{0.25,0.25,0.25}
\begin{tikzpicture}[line cap=round,line join=round,>=triangle 45,x=0.17cm,y=0.17cm]
\clip(-0.61,-7.4) rectangle (13.25,7.24);
\draw (5.99,3.16)-- (6,0);
\draw (3.27,-1.59)-- (6,0);
\draw (3.27,-1.59)-- (6.01,-3.16);
\draw (8.74,-1.57)-- (6.01,-3.16);
\draw (6,0)-- (8.74,-1.57);
\draw (8.73,1.59)-- (8.74,-1.57);
\draw (5.99,3.16)-- (8.73,1.59);
\draw (3.26,1.57)-- (5.99,3.16);
\draw (3.27,-1.59)-- (3.26,1.57);
\draw (3.27,-1.59)-- (3.28,-4.75);
\draw (0.53,-3.18)-- (3.28,-4.75);
\draw (0.52,-0.02)-- (0.53,-3.18);
\draw (0.52,-0.02)-- (3.27,-1.59);
\draw (0.51,3.15)-- (0.52,-0.02);
\draw (3.26,1.57)-- (0.51,3.15);
\draw (3.25,4.74)-- (0.51,3.15);
\draw (3.25,4.74)-- (5.99,3.16);
\draw (3.25,4.74)-- (5.98,6.33);
\draw (5.99,3.16)-- (8.72,4.75);
\draw (5.98,6.33)-- (8.72,4.75);
\draw (11.47,3.18)-- (8.72,4.75);
\draw (11.47,3.18)-- (8.73,1.59);
\draw (11.47,3.18)-- (11.48,0.02);
\draw (8.74,-1.57)-- (11.48,0.02);
\draw (8.74,-1.57)-- (8.75,-4.73);
\draw (11.48,0.02)-- (11.49,-3.14);
\draw (8.75,-4.73)-- (11.49,-3.14);
\draw (6.01,-3.16)-- (6.02,-6.32);
\draw (8.75,-4.73)-- (6.02,-6.32);
\draw (3.28,-4.75)-- (6.02,-6.32);
\fill [color=black] (6,0) ++(-1.5pt,0 pt) -- ++(1.5pt,1.5pt)--++(1.5pt,-1.5pt)--++(-1.5pt,-1.5pt)--++(-1.5pt,1.5pt);
\draw[color=black] (4.73,1.09) node {$Q$};
\fill [color=black] (6.02,-6.32) circle (1.5pt);
\fill [color=black] (11.49,-3.14) circle (1.5pt);
\fill [color=uququq] (11.47,3.18) circle (1.5pt);
\fill [color=uququq] (5.98,6.33) circle (1.5pt);
\fill [color=uququq] (0.51,3.15) circle (1.5pt);
\fill [color=uququq] (0.53,-3.18) circle (1.5pt);
\fill [color=uququq] (5.99,3.16) circle (1.5pt);
\fill [color=uququq] (3.27,-1.59) circle (1.5pt);
\fill [color=uququq] (8.74,-1.57) circle (1.5pt);
\fill [color=uququq] (6.01,-3.16) ++(-1.5pt,0 pt) -- ++(1.5pt,1.5pt)--++(1.5pt,-1.5pt)--++(-1.5pt,-1.5pt)--++(-1.5pt,1.5pt);
\fill [color=uququq] (5.99,3.16) circle (1.5pt);
\fill [color=uququq] (0.52,-0.02) circle (1.5pt);
\fill [color=uququq] (3.25,4.74) circle (1.5pt);
\fill [color=uququq] (8.72,4.75) circle (1.5pt);
\fill [color=uququq] (11.48,0.02) circle (1.5pt);
\fill [color=uququq] (8.75,-4.73) circle (1.5pt);
\fill [color=uququq] (3.28,-4.75) circle (1.5pt);
\fill [color=uququq] (3.26,1.57) circle (1.5pt);
\fill [color=uququq] (8.73,1.59) circle (1.5pt);
\end{tikzpicture}  &
\definecolor{tttttt}{rgb}{0.2,0.2,0.2}
\definecolor{uququq}{rgb}{0.25,0.25,0.25}
\begin{tikzpicture}[line cap=round,line join=round,>=triangle 45,x=0.19cm,y=0.19cm]
\clip(-0.61,-6.59) rectangle (12.92,7.24);
\draw(8,-0.64) -- (8.64,-0.64) -- (8.64,0) -- (8,0) -- cycle; 
\draw(2.95,3.26) -- (2.65,3.82) -- (2.09,3.51) -- (2.39,2.96) -- cycle; 
\draw(2.44,-1.67) -- (2.9,-2.11) -- (3.34,-1.64) -- (2.87,-1.21) -- cycle; 
\draw(10.94,-1.34) -- (11.48,-1.68) -- (11.82,-1.14) -- (11.28,-0.81) -- cycle; 
\draw(11.34,1.92) -- (10.85,1.52) -- (11.25,1.03) -- (11.74,1.43) -- cycle; 
\draw [dash pattern=on 1pt off 1pt,color=tttttt] (12.28,0.78)-- (8,-6);
\draw [dash pattern=on 1pt off 1pt,color=tttttt] (8,6)-- (12.28,0.78);
\draw [dash pattern=on 1pt off 1pt,color=tttttt] (-0.12,1.59)-- (8,-6);
\draw [dash pattern=on 1pt off 1pt,color=tttttt] (-0.12,1.59)-- (8,6);
\draw [dash pattern=on 1pt off 1pt,color=tttttt] (4,0)-- (0.73,-3.5);
\draw [dash pattern=on 1pt off 1pt,color=tttttt] (4,0)-- (10,0);
\draw [dash pattern=on 1pt off 1pt,color=tttttt] (8,6)-- (8,-6);
\draw [dash pattern=on 1pt off 1pt,color=tttttt] (1.56,4.5)-- (4,0);
\draw [dash pattern=on 1pt off 1pt,color=tttttt] (12.09,1.71)-- (10,0);
\draw [dash pattern=on 1pt off 1pt,color=tttttt] (10,0)-- (12.17,-1.37);
\draw (1.56,4.5)-- (8,6);
\draw (4,0)-- (8,6);
\draw (-0.12,1.59)-- (4,0);
\draw (1.56,4.5)-- (-0.12,1.59);
\draw (-0.12,1.59)-- (0.73,-3.5);
\draw (4,0)-- (8,-6);
\draw (0.73,-3.5)-- (8,-6);
\draw (8,6)-- (10,0);
\draw (10,0)-- (8,-6);
\draw (8,6)-- (12.09,1.71);
\draw (12.09,1.71)-- (12.28,0.78);
\draw (10,0)-- (12.28,0.78);
\draw (12.28,0.78)-- (12.17,-1.37);
\draw (12.17,-1.37)-- (8,-6);
\fill [color=uququq] (8,-6) circle (1.5pt);
\fill [color=uququq] (12.28,0.78) circle (1.5pt);
\fill [color=uququq] (10,0) circle (1.5pt);
\fill [color=uququq] (8,6) circle (1.5pt);
\fill [color=uququq] (-0.12,1.59) circle (1.5pt);
\fill [color=uququq] (4,0) circle (1.5pt);
\fill [color=uququq] (0.73,-3.5) circle (1.5pt);
\draw[color=tttttt] (6.8,1.27) node {$Q$};
\fill [color=uququq] (12.09,1.71) circle (1.5pt);
\fill [color=uququq] (12.17,-1.37) circle (1.5pt);
\fill [color=uququq] (1.56,4.5) circle (1.5pt);
\end{tikzpicture} &
\begin{tikzpicture}[line cap=round,line join=round,>=triangle 45,x=0.19cm,y=0.19cm]
\clip(0.2,-7.91) rectangle (12.59,6.28);
\draw (5.66,-2.36)-- (7.52,5.01);
\draw (7.52,5.01)-- (2.86,3.92);
\draw (2.86,3.92)-- (0.73,0.73);
\draw (0.73,0.73)-- (5.66,-2.36);
\draw (0.73,0.73)-- (3.16,-5.62);
\draw (5.66,-2.36)-- (7.57,-6.48);
\draw (3.16,-5.62)-- (7.57,-6.48);
\draw (9.64,-0.56)-- (7.57,-6.48);
\draw (10.76,-4.65)-- (7.57,-6.48);
\draw (12.21,-0.02)-- (10.76,-4.65);
\draw (12.21,-0.02)-- (9.64,-0.56);
\draw (12.21,-0.02)-- (11.23,2.57);
\draw (7.52,5.01)-- (9.64,-0.56);
\draw (7.52,5.01)-- (11.23,2.57);
\fill [color=black] (2.86,3.92) circle (2.0pt);
\fill [color=black] (5.66,-2.36) circle (2.0pt);
\draw[color=black] (4.43,-2.66) node {$z_1$};
\fill [color=black] (3.16,-5.62) circle (2.0pt);
\fill [color=black] (9.64,-0.56) circle (2.0pt);
\draw[color=black] (10.76,-1.52) node {$z_3$};
\fill [color=black] (11.23,2.57) circle (2.0pt);
\fill [color=black] (10.76,-4.65) circle (2.0pt);
\fill [color=black] (0.73,0.73) circle (2.0pt);
\fill [color=black] (7.52,5.01) circle (2.0pt);
\draw[color=black] (8.84,5.56) node {$z_2$};
\fill [color=black] (12.21,-0.02) circle (2.0pt);
\fill [color=black] (7.57,-6.48) circle (2.0pt);
\draw[color=black] (8.75,-7.07) node {$z_4$};
\draw[color=black] (5.3,1.99) node {$Q$};
\end{tikzpicture}
\end{tabular}
\end{center}
\vspace{-0.4cm}
\caption{Examples of lattices $Q$ (from the left to the right): square; rhombic; orthogonal;  quadrilateral.}
\label{fig-meshes}
\end{figure}
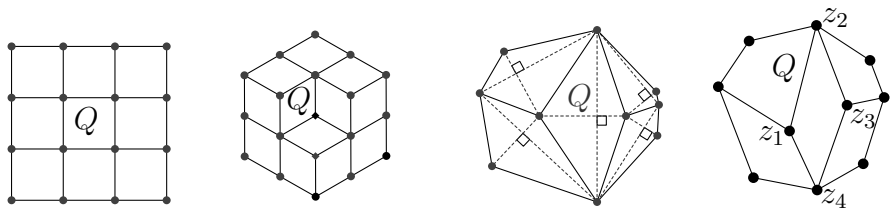

A complex-valued function $f$ on the vertices of $Q$ is called \emph{discrete analytic} \cite{Mercat-08}, if the difference quotients along the two diagonals of each face are equal, i.~e.,
\begin{equation}\label{eq-def-analytic}
\frac{f(z_1)-f(z_3)}{z_1-z_3}=
\frac{f(z_2)-f(z_4)}{z_2-z_4}
\end{equation}
for each quadrilateral face $z_1z_2z_3z_4$; 
see Figure~\ref{fig-meshes} to the right.
The motivation for this definition is that both sides of equation~\eqref{eq-def-analytic} approximate the derivative of an analytic function $f$ inside this face.
The real part of a discrete analytic function is called a \emph{discrete harmonic function}. 

Discrete complex analysis is analogous to the classical complex analysis in many aspects \cite{Lovasz-04}. One of the most natural and at the same time challenging problems is to prove convergence of discrete theory to the continuous one when the lattice becomes finer and finer \cite{Smirnov-10}. A natural formalization of such \emph{convergence} is uniform convergence of the solution of the Dirichlet boundary value problem for
a discrete harmonic function to a harmonic function in the scaling limit.

\subsection{Previous work}
Convergence in this sense was proved by R.~Courant--K.~Friedrichs--H.~Lewy \cite[\S4]{Courant-friedrichs-Lewy-28} and L.~Lusternik \cite[\S4--5]{Lusternik-26}
for square lattices,
by D.~Chelkak--S.~Smirnov \cite[Proposition~3.3]{Chelkak-Smirnov-08} and
(implicitly and in less general setup) by P.G.~Ciarlet--P.-A.~Raviart
\cite[Theorem~2]{Ciarlet-Raviart-73} for rhombic lattices.
In fact convergence for rhombic lattices is equivalent to convergence of the classical finite element method  \cite{Duffin-68}. The latter subject is well-developed; see a survey \cite{Brandts-etal-11} and a textbook \cite{Braess-07}. Nonrhombic lattices cannot be accessed by known methods. Weaker convergence results not involving boundary value problems were obtained in \cite[Theorem~3]{Mercat-01}, \cite[Theorem~2]{Hilderbrandt-etal-06}.


\subsection{Contributions}


We prove that the Dirichlet boundary value problem for 
a discrete harmonic function
on a quadrilateral lattice has a unique solution. Our main result is that in the case of orthogonal lattices this solution uniformly converges to a harmonic function in the scaling limit; see 
Convergence Theorem~\ref{th-uniform-approx} below.
This solves a problem of S.~Smirnov~\cite[Question~1]{Smirnov-10}.

In particular, our main result implies uniform convergence of the finite element method on Delaunay triangulations; see Corollary~\ref{cor-cotangent} below. This solves a problem of A.~Bobenko (private communication; see also \cite[Table in Section~2]{Wardetzky-etal-07}). Our main result also implies uniform convergence of the discrete harmonic measure; see Corollary~\ref{cor-measure} below.

\subsection{Statements} \label{ssec-state}

Let us give precise statements of main results.

The \emph{boundary} $\partial Q$ of the lattice $Q$ is the boundary of its outer face. Hereafter assume for simplicity that $\partial Q$ is a closed  curve without self-intersections. 
Assume that the intersection of any two faces of $Q$ is either empty, or a single vertex, or a single edge.
Denote by $Q^0$ the set of vertices of~$Q$. 

Let $g\colon\mathbb{C}\to\mathbb{R}$ be a smooth function. The \emph{Dirichlet (boundary value) problem on $Q$} is to find a discrete harmonic function $u_{Q,g}\colon Q^0\to\mathbb{R}$ such that $u_{Q,g}(z)=g(z)$ for each vertex $z\in\partial Q$. The function $u_{Q,g}\colon Q^0\to\mathbb{R}$ is called a \emph{solution} of the Dirichlet problem.

\begin{uniqueness-theorem}
\label{cl-dirichlet-problem}
The Dirichlet boundary value problem on any finite quadrilateral lattice has a unique solution.
\end{uniqueness-theorem}

This theorem is nontrivial because discrete harmonic functions do not satisfy the maximum principle in general; see Example~\ref{ex-nomax} below.


Let $\Omega\subset\mathbb{C}$ be a bounded simply-connected domain (no smoothness assumptions are imposed on the boundary  $\partial \Omega$). The \emph{Dirichlet (boundary value) problem on $\Omega$}
is to find a continuous function $u_{\Omega,g}\colon \Closure\Omega\to\mathbb{R}$ harmonic in $\Omega$
such that $u_{\Omega,g}(z)=g(z)$ for each point $z\in\partial \Omega$.
The function $u_{\Omega,g}\colon \mathrm{Cl}\Omega\to\mathbb{R}$ is called a \emph{solution} of the Dirichlet problem. It is well-known that the solution always exists and is unique; e.g., see \cite[Section~3.3]{Chelkak-Smirnov-08}.


A sequence of lattices $\{Q_n\}$ \emph{approximates the domain} $\Omega$, if for $n\to\infty$:
\begin{itemize}
\item the maximal distance from a point of $\partial Q_n$ to the set $\partial\Omega$ tends to zero;
\item the maximal distance from a point of $\partial\Omega$ to the set $\partial Q_n$ tends to zero;
\item the maximal edge length of $Q_n$ tends to zero.
\end{itemize}

A sequence of lattices $\{Q_n\}$ is \emph{nondegenerate uniform}, if there is a constant $\mathrm{Const}$ (not depending on $n$) such that for each member of the sequence:
\begin{itemize}
\item[(D)] the ratio of the diagonals of each face is less than $\mathrm{Const}$ and the angle between them is greater than $1/\mathrm{Const}$;
\item[(U)] the number of vertices in an arbitrary disk of radius equal to the maximal edge length is less than $\mathrm{Const}$.
\end{itemize}



%

A sequence of functions $u_{n}\colon Q^0_n\to\mathbb{R}$ \emph{uniformly converges} to a function $u\colon\mathrm{Cl}\Omega\to\mathbb{R}$, if for  $n\to\infty$ we have $\max_{z\in Q_n^0\cap\mathrm{Cl}\Omega}|u_n(z)-u(z)|\to 0$.

\begin{convergence-theorem}
\label{th-uniform-approx}
Let $\Omega\subset\mathbb{C}$ be a bounded simply-connected domain. Let $g\colon \mathbb{C}\to\mathbb{R}$ be a smooth function. Let $\{Q_n\}$ be a nondegenerate uniform sequence of finite orthogonal lattices  approximating the domain $\Omega$. Then the solution $u_{Q_n,g}\colon Q_n^0\to\mathbb{R}$ of the Dirichlet problem on $Q_n$ uniformly converges to the solution $u_{\Omega,g}\colon \mathrm{Cl}\Omega\to\mathbb{R}$ of the Dirichlet problem on $\Omega$.
\end{convergence-theorem}

\subsection{Organization of the paper}
In Section~\ref{sec-main-ideas} we introduce main ideas of the proofs and state key lemmas.
In Sections~\ref{sec-variational-principle} and~\ref{sec-scaling-limit} we prove Existence and Uniqueness Theorem~\ref{cl-dirichlet-problem} and
Convergence Theorem~\ref{th-uniform-approx}, respectively. In Section~\ref{sec-open-problems} we give applications of our results to numerical analysis, network theory, probability theory, and state some open problems.

\section{Main ideas}\label{sec-main-ideas}


\subsection{Energy minimization}\label{ssec-energy}

Our approach is based on energy estimates inspired by alternating-current network theory.
%

Recall that the \emph{(Dirichlet) energy} of a continuous piecewise-smooth function $u\colon \Omega\to\mathbb{R}$ is
$$
E_\Omega(u):=\int_\Omega |\nabla u|^2\, dxdy.
$$
This is a convex functional on the space of continuous piecewise-smooth functions with fixed boundary values, and harmonic functions are characterized as minimizers of this functional.

Let us define a discrete counterpart of the energy,
which is the main concept of the paper.
The \emph{gradient} of a function $u\colon Q^0\to \mathbb{R}$ at a face $z_1z_2z_3z_4$ of the  quadrilateral lattice $Q$ is the unique vector $\nabla_{\!Q} u(z_1z_2z_3z_4)\in\mathbb{R}^2$ such that
\begin{align*}
\nabla_{\!Q} u(z_1z_2z_3z_4) \cdot \overrightarrow{z_1z_3}&=u(z_3)-u(z_1), \\
\nabla_{\!Q} u(z_1z_2z_3z_4) \cdot \overrightarrow{z_2z_4}&=u(z_4)-u(z_2).
\end{align*}
The \emph{energy} of the function $u\colon Q^0\to \mathbb{R}$
is the number
\begin{equation}\label{eq-def-energy}
E_Q(u):=\sum_{z_1z_2z_3z_4\subset Q}|\nabla_{\!Q} u(z_1z_2z_3z_4)|^2\cdot \mathrm{Area}(z_1z_2z_3z_4),
\end{equation}
where the sum is over all the faces $z_1z_2z_3z_4$ of the lattice $Q$.
In the particular case of an orthogonal lattice the energy takes the usual form
\begin{equation*}
E_Q(u)=\sum_{z_1z_2z_3z_4\subset Q}
\left(c(z_1z_3)\left(u(z_3)-u(z_1)\right)^2+
c(z_1z_3)^{-1}\left(u(z_4)-u(z_2)\right)^2\right)/2.
\end{equation*}
Hereafter we list the vertices of each face $z_1z_2z_3z_4$ clockwise and denote $c(z_1z_3):=i\frac{z_2-z_4}{z_1-z_3}$.

We give a physical motivation for this definition in Section~\ref{ssec-phys}. A similar but nonequivalent definition was given in \cite[Formula~(12)]{Mercat-08}.
Our energy has the same properties as its continuous counterpart.

\begin{convexity-principle} \label{cl-min-energy}
The energy $E_Q(u)$ is a strictly convex functional on the affine space $\mathbb{R}^{Q^0-\partial Q}$ of functions $u\colon Q^0\to \mathbb{R}$ having fixed values at the boundary~$\partial Q$.
\end{convexity-principle}

\begin{variational-principle} \label{cl-laplacian} \label{cl-upper-bound-energy-inequality} A function $u\colon Q^0\to\mathbb{R}$ has minimal energy $E_Q(u)$ among all the functions with the same boundary values if and only if it is discrete harmonic.
\end{variational-principle}

These principles are proved in Section~\ref{sec-variational-principle}. Existence and Uniqueness Theorem~\ref{cl-dirichlet-problem} is their direct consequence. After the ``right'' discrete energy has been guessed, 
the proof of these results is standard.




\subsection{Energy estimates}\label{ssec-plan}

Let us state more delicate energy estimates required for the proof of Convergence Theorem~\ref{th-uniform-approx}.

Joining the opposite vertices in each quadrilateral face of the lattice $Q$, we get two connected graphs $B$ and~$W$ \emph{associated} to the lattice; see Figure~\ref{fig-q-lattice}. (The opposite vertices of a face are joined by a straight line segment, if the segment lies inside the face, and by the $2$-segment broken line through the midpoint of the opposite diagonal, otherwise.)
The \emph{eccentricity} of a lattice $Q$ is the infimum of the numbers $\mathrm{Const}$ such that the lattice satisfies conditions~(D) and~(U) from Section~\ref{ssec-state}.
Throughout the paper we use the following notation:
\begin{itemize}
\item $g\colon\mathbb{C}\to\mathbb{R}$ is an arbitrary smooth function; 
$|D^k g(z)|:=\max_{0\le j\le k} \left|\frac{\partial^k g}{\partial^j x\,\partial^{k-j}y}(z)\right|$;
\item $B$ and $W$ are the two graphs associated to the lattice $Q$;
\item $e$ is the eccentricity of the lattice $Q$;
\item $h$ is twice the maximal edge length of the lattice $Q$.
\end{itemize}

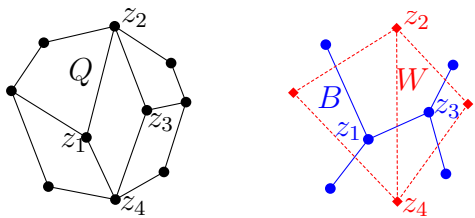
\begin{figure}[htbp]
\begin{center}
\begin{tikzpicture}[line cap=round,line join=round,>=triangle 45,x=0.2cm,y=0.2cm]
\clip(0.2,-7.55) rectangle (12.59,6.16);
\draw (5.66,-2.36)-- (7.52,5.01);
\draw (7.52,5.01)-- (2.86,3.92);
\draw (2.86,3.92)-- (0.73,0.73);
\draw (0.73,0.73)-- (5.66,-2.36);
\draw (0.73,0.73)-- (3.16,-5.62);
\draw (5.66,-2.36)-- (7.57,-6.48);
\draw (3.16,-5.62)-- (7.57,-6.48);
\draw (9.64,-0.56)-- (7.57,-6.48);
\draw (10.76,-4.65)-- (7.57,-6.48);
\draw (12.21,-0.02)-- (10.76,-4.65);
\draw (12.21,-0.02)-- (9.64,-0.56);
\draw (12.21,-0.02)-- (11.23,2.57);
\draw (7.52,5.01)-- (9.64,-0.56);
\draw (7.52,5.01)-- (11.23,2.57);
\fill [color=black] (2.86,3.92) circle (2.0pt);
\fill [color=black] (5.66,-2.36) circle (2.0pt);
\draw[color=black] (4.85,-2.72) node {$z_1$};
\fill [color=black] (3.16,-5.62) circle (2.0pt);
\fill [color=black] (9.64,-0.56) circle (2.0pt);
\draw[color=black] (10.55,-1.34) node {$z_3$};
\fill [color=black] (11.23,2.57) circle (2.0pt);
\fill [color=black] (10.76,-4.65) circle (2.0pt);
\fill [color=black] (0.73,0.73) circle (2.0pt);
\fill [color=black] (7.52,5.01) circle (2.0pt);
\draw[color=black] (8.84,5.56) node {$z_2$};
\fill [color=black] (12.21,-0.02) circle (2.0pt);
\fill [color=black] (7.57,-6.48) circle (2.0pt);
\draw[color=black] (8.75,-6.86) node {$z_4$};
\draw[color=black] (5.3,1.99) node {$Q$};
\end{tikzpicture}\qquad\definecolor{ffqqqq}{rgb}{1,0,0}
\definecolor{qqqqff}{rgb}{0,0,1}
\begin{tikzpicture}[line cap=round,line join=round,>=triangle 45,x=0.2cm,y=0.2cm]
\clip(0.26,-7.43) rectangle (12.8,6.13);
\draw [color=qqqqff] (2.86,3.92)-- (5.66,-2.36);
\draw [color=qqqqff] (5.66,-2.36)-- (3.16,-5.62);
\draw [color=qqqqff] (5.66,-2.36)-- (9.64,-0.56);
\draw [color=qqqqff] (9.64,-0.56)-- (11.23,2.57);
\draw [color=qqqqff] (9.64,-0.56)-- (10.76,-4.65);
\draw [dash pattern=on 1pt off 1pt,color=ffqqqq] (0.73,0.73)-- (7.52,5.01);
\draw [dash pattern=on 1pt off 1pt,color=ffqqqq] (7.52,5.01)-- (12.21,-0.02);
\draw [dash pattern=on 1pt off 1pt,color=ffqqqq] (12.21,-0.02)-- (7.57,-6.48);
\draw [dash pattern=on 1pt off 1pt,color=ffqqqq] (0.73,0.73)-- (7.57,-6.48);
\draw [dash pattern=on 1pt off 1pt,color=ffqqqq] (7.52,5.01)-- (7.57,-6.48);
\fill [color=qqqqff] (2.86,3.92) circle (2.0pt);
\fill [color=qqqqff] (5.66,-2.36) circle (2.0pt);
\draw[color=qqqqff] (4.34,-1.82) node {$z_1$};
\draw[color=qqqqff] (3.14,0.46) node {$B$};
\fill [color=qqqqff] (3.16,-5.62) circle (2.0pt);
\fill [color=qqqqff] (9.64,-0.56) circle (2.0pt);
\draw[color=qqqqff] (10.85,-0.35) node {$z_3$};
\fill [color=qqqqff] (11.23,2.57) circle (2.0pt);
\fill [color=qqqqff] (10.76,-4.65) circle (2.0pt);
\fill [color=ffqqqq] (0.73,0.73) ++(-2.0pt,0 pt) -- ++(2.0pt,2.0pt)--++(2.0pt,-2.0pt)--++(-2.0pt,-2.0pt)--++(-2.0pt,2.0pt);
\fill [color=ffqqqq] (7.52,5.01) ++(-2.0pt,0 pt) -- ++(2.0pt,2.0pt)--++(2.0pt,-2.0pt)--++(-2.0pt,-2.0pt)--++(-2.0pt,2.0pt);
\draw[color=ffqqqq] (8.84,5.56) node {$z_2$};
\fill [color=ffqqqq] (12.21,-0.02) ++(-2.0pt,0 pt) -- ++(2.0pt,2.0pt)--++(2.0pt,-2.0pt)--++(-2.0pt,-2.0pt)--++(-2.0pt,2.0pt);
\fill [color=ffqqqq] (7.57,-6.48) ++(-2.0pt,0 pt) -- ++(2.0pt,2.0pt)--++(2.0pt,-2.0pt)--++(-2.0pt,-2.0pt)--++(-2.0pt,2.0pt);
\draw[color=ffqqqq] (8.75,-6.86) node {$z_4$};
\draw[color=ffqqqq] (8.57,1.63) node {$W$};
\end{tikzpicture}
\caption{The graphs $B$ and $W$ associated to a quadrilateral lattice $Q$.} 
\label{fig-q-lattice}
\end{center}
\end{figure}






Our first lemma gives the basis for our estimates.

\begin{energy-convergence-lemma}\label{l-energy-convergence}
Let $\Omega$ be a bounded simply-connected domain with smooth boundary. 
Let $\{Q_n\}$ be a nondegenerate uniform sequence of quadrilateral lattices approximating the domain $\Omega$. 
Then $E_{Q_n}(g\left|_{Q_n^0}\right.)\to E_{\Omega}(g)$ as
$n\to\infty$.
\end{energy-convergence-lemma}

Our main lemma estimates the difference between the values of a discrete harmonic function at two vertices of the graph~$B$ through the 
distance between them. Estimates of the form $|u(z)-u(w)|\le \mathrm{Const}\cdot |z-w|^p$ were known for square lattices, $p=1/2$ \cite[equation~(12) in \S4.2]{Courant-friedrichs-Lewy-28}, and for rhombic ones, $p=1$ \cite[Corollary~2.9, Proposition~2.7 and Appendix~A]{Chelkak-Smirnov-08}. 
However their proofs 
do not generalize to more general lattices (because in general there are no discrete exponentials \cite{Kenyon-02} and no higher derivatives of discrete analytic functions). Following the approach of \cite[\S5.4]{Lusternik-26} and~\cite[Corollary~3.4]{Saloff-Coste-97} we get an  estimate, which is harder to state but much easier to prove.



\begin{continuity-lemma}\label{l-equicontinuity}
Let $Q$ be an orthogonal lattice. 
Let $u\colon Q^0\to \mathbb{R}$ be a discrete harmonic function.
Let $z,w\in B^0$ be two vertices with $|z-w|\ge h$.
Let $R$ be a square of side length $r>3|z-w|$ with the center at the midpoint of the segment $zw$ and the sides parallel and orthogonal to $zw$.
Then there is a constant $\mathrm{Const}_{e}$  depending only on $e$ (but not on $Q$, $u$, $z$, $w$, $R$, $r$, $h$) 
such that 
\begin{equation}\label{eq-l-equicontinuity}
\left|u(z)-u(w)\right|\le
\mathrm{Const}_{e}\cdot E_Q(u)^{1/2}\cdot \ln^{-1/2}\frac{r}{3|z-w|}+\max_{z',w'\in R\cap \partial Q\cap B^0}\left|u(z')-u(w')\right|,
\end{equation}
where the maximum is over all pairs of boundary vertices $z',w'\in  \partial Q\cap B^0$ lying inside $R$. The maximum is set to be zero, if there are no such vertices. 
For $|z-w|< h< r/3$ the same inequality holds with $|z-w|$ replaced by $h$.
\end{continuity-lemma}



Our next lemma concerns approximation of the laplacian. In the linear space of functions $u\colon Q^0\to\mathbb{R}$, set the coordinates to be the values of the function. The energy $E_Q(u)$ is quadratic in these coordinates. Define the \emph{laplacian} $\Delta_Q u\colon Q^0\to\mathbb{R}$ of a function $u\colon Q^0\to\mathbb{R}$ by the formula
\begin{equation*} 
[\Delta_Q u](z):=-\frac{\partial E_Q(u)}{\partial u(z)} \qquad \text{for each $z\in Q^0$.} \end{equation*}
In the particular case of orthogonal lattices the laplacian takes the usual form
\begin{equation*}
[\Delta_Q u](z)=\sum_{z_1z_3\subset B\,:\,z_1=z} c(z_1z_3)\left(u(z_3)-u(z_1)\right).
\end{equation*}
In what follows we omit the arguments of $\Delta_Q u$ and $\nabla_{\!Q} u$, if no confusion arises. For rhombic lattices $\Delta_Q u$ approximates the laplacian $\Delta u$ in some sense \cite[Lemma~2.2]{Chelkak-Smirnov-08}. For nonrhombic lattices such approximation does not hold. 
Similarly to~\cite{Hilderbrandt-etal-11}, we use \emph{mean} approximation of the laplacian.

\begin{laplacian-approximation-lemma}\label{cl-approximation}
Let $Q$ be a 
quadrilateral lattice and $R$ be a square of side length $r>h$ inside $\partial Q$.
Then there is a constant $\mathrm{Const}_e$ depending only on $e$ (but not on $Q$, $g$, $R$, $r$, $h$) 
such that
\begin{equation*}
\left|
\sum_{z\in R\cap B^0} \left[\Delta_Q (g\left|_{Q^0}\right.)\right](z)-
\int_R \Delta g\, dxdy
\right|\le
\mathrm{Const}_e\left(hr\max_{z\in R}|D^2 g(z)|+r^3\max_{z\in R}|D^3 g(z)|\right).
\end{equation*}
\end{laplacian-approximation-lemma}

Lemmas~\ref{l-energy-convergence}--\ref{cl-approximation}
 are proved in Section~\ref{sec-scaling-limit} using suitable modifications of the approaches of \cite{Duffin-59}, \cite{Lusternik-26},
 \cite{Chelkak-Smirnov-08}, 
 respectively.
Overall scheme of the proof of Convergence Theorem~\ref{th-uniform-approx} is analogous to \cite{Courant-friedrichs-Lewy-28, Chelkak-Smirnov-08}. 

\begin{proof}[Sketch of the proof of Convergence Theorem~\ref{th-uniform-approx} modulo the above lemmas]
(See details in Section~\ref{ssec-convergence}.)
Restrict each function $u_{Q_n,g}$ to 
$B^0_n$. 
By Variational Principle~\ref{cl-laplacian} and Energy Convergence Lemma~\ref{l-energy-convergence} we have
$E_{Q_n}(u_{Q_n,g})\le E_{Q_n}(g\left|_{Q^0_n}\right.)\to E_\Omega(g)<\infty$ for $n\to\infty$. Thus 
$E_{Q_n}(u_{Q_n,g})$ is bounded. Then for $r:=\sqrt{|z-w|}$ 
the right-hand side of~\eqref{eq-l-equicontinuity} tends to $0$ as $|z-w|\to 0$. By Equicontinuity Lemma~\ref{l-equicontinuity}
%
the sequence 
$u_{Q_n,g}$ is equicontinuous. By the Arzel\`a-Ascoli theorem a subsequence $u_{Q_k,g}$ of the sequence $u_{Q_n,g}$ uniformly converges  to a continuous function $u\colon \mathrm{Cl}\,\Omega\to\mathbb{R}$. By continuity 
$u(z)=g(z)$ for each $z\in\partial \Omega$.
Using the Weyl lemma and Laplacian Approximation Lemma~\ref{cl-approximation} for $r:=\sqrt{h}$ we show that 
$u$ is harmonic in $\Omega$. 
Hence $u$ equals the unique solution $u_{\Omega,g}$ of the Dirichlet problem. 
Thus the initial sequence $u_{Q_n,g}$ (not just a subsequence)
 uniformly 
converges to $u_{\Omega,g}$.
\end{proof}

\section{Existence and Uniqueness}\label{sec-variational-principle}

In this section we prove 
Existence and Uniqueness Theorem~\ref{cl-dirichlet-problem}
and the results stated in Section~\ref{ssec-energy}.
We also prove two results (Maximum Principle~\ref{cl-maximum-principle} and Green's Identity~\ref{cl-green} below) 
required for the next section.

\subsection{Convexity Principle}

\begin{proof}[Proof of Convexity Principle~\ref{cl-min-energy}]
Consider the linear space $\mathbb{R}^{Q^0}$ of functions $u\colon Q^0\to\mathbb{R}$. 
Clearly, the gradient $\nabla_{\!Q} u$ linearly depends on $u$ and thus the energy $E_Q(u)$ is a quadratic form in $u$. So it suffices to prove the convexity of $E_Q(u)$ in the case when
the affine space $\mathbb{R}^{Q^0-\partial Q}\subset \mathbb{R}^{Q^0}$ passes through the origin, that is, all the fixed boundary values equal zero.

Clearly, $E_Q(u)\ge 0$ for each $u\in \mathbb{R}^{Q^0-\partial Q}$. It remains to prove that $E_Q(u)=0$ only if $u=0$. Assume that $E_Q(u)=0$. Then $\nabla_{\!Q} u=0$.
Thus for each face $z_1z_2z_3z_4$ we have $u(z_1)=u(z_3)$ and $u(z_2)=u(z_4)$. 
Any face can be joined with the boundary $\partial Q$ by a sequence of faces such that two neighboring ones share a common edge.
Thus for each $z\in Q^0$ there is $w\in\partial Q$ such that $u(z)=u(w)$. 
Thus $u=0$.
\end{proof}

\subsection{Variational Principle}\label{ssec-variational}






Denote by $*\colon\mathbb{R}^2\to\mathbb{R}^2$ the counterclockwise rotation through $\pi/2$ about the origin. 
Two functions $u,v\colon Q^0\to\mathbb{R}$ are \emph{conjugate}, if $\nabla_{\!Q} v=*\nabla_{\!Q} u$. 


\begin{lemma} \label{cl-analyticity} Two functions $u,v\colon Q^0\to\mathbb{R}$ are conjugate if and only if the function $u+iv\colon Q^0\to\mathbb{C}$ is discrete analytic.
\end{lemma}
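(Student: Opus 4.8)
The plan is to verify the equivalence one face at a time, identifying $\mathbb{R}^2$ with $\mathbb{C}$ so that the rotation $*$ becomes multiplication by $i$. The key starting observation is that the Euclidean dot product of two vectors $p,w\in\mathbb{C}$ equals $\Real(\bar p\, w)$, so the two defining equations of the gradient of $u$ at a face $z_1z_2z_3z_4$ read
\[
u(z_3)-u(z_1)=\Real\bigl(\overline{\nabla_{\!Q}u}\,(z_3-z_1)\bigr),\qquad
u(z_4)-u(z_2)=\Real\bigl(\overline{\nabla_{\!Q}u}\,(z_4-z_2)\bigr),
\]
and likewise for $v$. Since the diagonals $z_3-z_1$ and $z_4-z_2$ are linearly independent over $\mathbb{R}$, the map $p\mapsto(\Real(\bar p(z_3-z_1)),\Real(\bar p(z_4-z_2)))$ is an isomorphism $\mathbb{C}\to\mathbb{R}^2$; hence $\nabla_{\!Q}u$ is the \emph{unique} complex number solving these two real scalar equations. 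I will invoke this uniqueness repeatedly.

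For the forward implication I assume $u,v$ conjugate, i.e.\ $\nabla_{\!Q}v=i\,\nabla_{\!Q}u$, equivalently $\overline{\nabla_{\!Q}v}=-i\,\overline{\nabla_{\!Q}u}$. Using the identity $\Real(-i\zeta)=\Imaginary\zeta$ I obtain $v(z_3)-v(z_1)=\Imaginary(\overline{\nabla_{\!Q}u}\,(z_3-z_1))$, and combining this with the real-part formula for $u$ gives $(u+iv)(z_3)-(u+iv)(z_1)=\overline{\nabla_{\!Q}u}\,(z_3-z_1)$. The same computation on the other diagonal yields $(u+iv)(z_2)-(u+iv)(z_4)=\overline{\nabla_{\!Q}u}\,(z_2-z_4)$. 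Dividing, both difference quotients in~\eqref{eq-def-analytic} equal the single complex number $\overline{\nabla_{\!Q}u}$, so $u+iv$ is discrete analytic.

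For the converse I suppose $u+iv$ is discrete analytic and let $c$ be the common value of the two difference quotients, so $(u+iv)(z_3)-(u+iv)(z_1)=c\,(z_3-z_1)$ and the analogous identity holds on the second diagonal. Taking real parts shows $c$ satisfies exactly the two equations defining $\overline{\nabla_{\!Q}u}$, whence $c=\overline{\nabla_{\!Q}u}$ by the uniqueness noted above. Taking imaginary parts and again using $\Imaginary\zeta=\Real(-i\zeta)$ shows that $-i\,\overline{\nabla_{\!Q}u}$ solves the two equations defining $\overline{\nabla_{\!Q}v}$, so $\nabla_{\!Q}v=i\,\nabla_{\!Q}u=*\nabla_{\!Q}u$, i.e.\ $u$ and $v$ are conjugate.

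The computation itself is routine bookkeeping with complex conjugates; the only point needing care is the uniqueness step — that the two real scalar conditions pin the complex gradient down — which rests on the nondegeneracy of the diagonals and is what lets me pass freely between the gradient and the complex difference quotient in both directions. Everything is local to a single face, so no global structure of $Q$ enters.
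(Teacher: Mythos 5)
Your proof is correct and is essentially the paper's argument: both work face by face, identify $\mathbb{R}^2$ with $\mathbb{C}$ so that $a\cdot b=\Real(\bar a\,b)$ and $*$ becomes multiplication by $i$, and both rest on the same nondegeneracy fact that the two diagonals of a face are not parallel. The only difference is mechanical: the paper substitutes $u(z_1)-u(z_3)=\nabla_{\!Q}u\cdot\overline{(z_1-z_3)}/2+\overline{\nabla_{\!Q}u}\cdot(z_1-z_3)/2$ into the analyticity equation and factors it as $\left(\nabla_{\!Q}u+i\nabla_{\!Q}v\right)\left(\frac{\overline{z_1-z_3}}{z_1-z_3}-\frac{\overline{z_2-z_4}}{z_2-z_4}\right)=0$, obtaining both implications in one equivalence chain, whereas you prove the two directions separately by identifying the common difference quotient with $\overline{\nabla_{\!Q}u}$ and invoking uniqueness of the solution of the gradient's two defining equations --- the same nondegeneracy in a different guise.
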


\begin{proof}
Identify the gradient $\nabla_{\!Q} u\in\mathbb{R}^2$ with a complex number $\nabla_{\!Q} u\in\mathbb{C}$.
The function $u+iv\colon Q^0\to\mathbb{C}$ is discrete analytic if and only if
$$
\frac{u(z_1)+iv(z_1)-u(z_3)-iv(z_3)}{z_1-z_3}=
\frac{u(z_2)+iv(z_2)-u(z_4)-iv(z_4)}{z_2-z_4}.
$$
for each face $z_1z_2z_3z_4$ of the lattice $Q$.
Substitute the expression
\begin{equation*}\label{eq-nabla}
u(z_1)-u(z_3)=
\nabla_{\!Q} u\cdot\overline{(z_1-z_3)}/2
+\overline{\nabla_{\!Q} u}\cdot(z_1-z_3)/2
\end{equation*}
and analogous ones for $v(z_1)-v(z_3)$, $u(z_2)-u(z_4)$, $v(z_2)-v(z_4)$ into the above equation. We get 
$$
\left(\nabla_{\!Q} u+i\nabla_{\!Q} v\right)\cdot\left( \frac{\overline{z_1-z_3}}{z_1-z_3}-\frac{\overline{z_2-z_4}}{z_2-z_4}\right)=0.
$$
Since the second factor on the left-hand side is nonzero, the equation is equivalent to
$ \nabla_{\!Q} v=i\nabla_{\!Q} u.$
\end{proof}

\begin{lemma}\label{cl-exist-conj2} A function $u\colon Q^0\to\mathbb{R}$ has a conjugate if and only if
for each $z\in Q^0-\partial Q$ we have
$$\sum_{z_1z_2z_3z_4\,:\,z_1=z} *\nabla_{\!Q} u(z_1z_2z_3z_4) \cdot\overrightarrow{z_4z_2}=0,$$
where the sum is over all the faces $z_1z_2z_3z_4$ of the lattice $Q$ such that $z_1=z$.
\end{lemma}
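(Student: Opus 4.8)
The plan is to read a conjugate $v$ as an ``integral'' of the rotated gradient $*\nabla_{\!Q} u$ along the diagonals of $Q$, and to recognize the stated vertex sum as the discrete curl of this field; the statement then becomes a discrete Poincar\'e lemma on the two graphs $B$ and $W$.

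First I would unwind the conjugacy relation $\nabla_{\!Q} v=*\nabla_{\!Q} u$ face by face. By the very definition of the gradient it is equivalent to the two scalar equations
\[
v(z_3)-v(z_1)=*\nabla_{\!Q} u\cdot\overrightarrow{z_1z_3},\qquad v(z_4)-v(z_2)=*\nabla_{\!Q} u\cdot\overrightarrow{z_2z_4}
\]
for every face $z_1z_2z_3z_4$. The first equation involves the values of $v$ only along the $B$-diagonals $z_1z_3$, the second only along the $W$-diagonals $z_2z_4$. Since the vertex sets $B^0$ and $W^0$ partition $Q^0$ and the two families of equations never interact, a conjugate $v$ exists if and only if the two systems are independently solvable. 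Each system prescribes the increments of a function along the edges of a connected graph; define the $1$-form $\alpha(\overrightarrow{ab}):=*\nabla_{\!Q} u\cdot\overrightarrow{ab}$ on oriented $B$-edges and, likewise, a $1$-form $\beta$ on oriented $W$-edges. A potential exists exactly when the corresponding $1$-form is \emph{exact}, i.e. has vanishing period around every cycle.

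Next I would reduce exactness to the stated local condition. Because $\partial Q$ is a simple closed curve and $Q$ is simply connected, each of $B,W$ is a connected graph embedded in a disk, so its cycle space is generated by the boundaries of its bounded faces; hence $\beta$ (say) is exact iff its period around every bounded face of $W$ vanishes. Here enters the planar duality between $B$ and $W$: the bounded faces of $W$ are in bijection with the interior vertices of $B$, the face dual to $z\in B^0$ being bounded precisely by the $W$-diagonals $z_2z_4$ of the faces $z_1z_2z_3z_4$ with $z_1=z$. Listing each such face clockwise with $z=z_1$ makes the oriented edges $\overrightarrow{z_4z_2}$ chain up into this face boundary, since two consecutive faces around $z$ share a $Q$-edge through $z$ that identifies the terminal vertex $z_2$ of one with the initial vertex of the next. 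Consequently the period of $\beta$ around this face equals $\sum_{z_1=z}\beta(\overrightarrow{z_4z_2})=\sum_{z_1=z}*\nabla_{\!Q} u\cdot\overrightarrow{z_4z_2}$, which is exactly the sum in the statement. Symmetrically, the bounded faces of $B$ correspond to interior vertices $z\in W^0$ and yield the same expression with the two diagonals interchanged.

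Combining these gives both implications at once. For the ``only if'' direction, if a conjugate $v$ exists then $\alpha$ and $\beta$ are literally the differentials of $v|_{B^0}$ and $v|_{W^0}$, so all their periods, in particular those around bounded faces, vanish; reading this off through the duality above yields the displayed identity at every interior vertex. For the ``if'' direction, the vanishing of the sum at every interior $z\in B^0$ makes $\beta$ closed on $W$, hence exact, giving $v|_{W^0}$, while the vanishing at every interior $z\in W^0$ makes $\alpha$ closed on $B$, giving $v|_{B^0}$; together they produce a conjugate $v$. I expect the main obstacle to be the planar-duality bookkeeping in the middle paragraph: checking that the $W$-diagonals around an interior $B$-vertex really close up into the boundary of a single bounded face of $W$, that every bounded face arises this way, and that the clockwise convention orients the $\overrightarrow{z_4z_2}$ consistently so the vertex sum becomes a genuine cycle period. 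The linear-algebraic identities relating $*\nabla_{\!Q} u$ to the increments of $v$, and the passage from vanishing face-periods to global exactness (the discrete Poincar\'e lemma for a graph in a disk), are routine.
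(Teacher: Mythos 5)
Your proposal is correct and follows essentially the same route as the paper: both decompose the conjugacy relation into two independent systems of increments along the $B$-diagonals and the $W$-diagonals, identify the stated vertex sum with the period of the rotated gradient around the dual bounded face of $W$ (resp.\ $B$), and invoke exactness of a closed discrete $1$-form on a connected planar graph to integrate these increments into $v$ on $W^0$ and $B^0$ separately. The paper's proof is just a less formalized version of your argument (it telescopes the sum directly for ``only if'' and defines $v$ by path sums for ``if''), so there is nothing substantive to distinguish the two.
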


\begin{proof}
Let us prove the ``only if'' part.
Assume that $v\colon Q^0\to\mathbb{R}$ is conjugate to $u$. Then 
$$\sum_{z_1z_2z_3z_4\,:\,z_1=z} *\nabla_{\!Q} u(z_1z_2z_3z_4) \cdot\overrightarrow{z_4z_2}=
\sum_{z_1z_2z_3z_4\,:\,z_1=z} \nabla_{\!Q} v(z_1z_2z_3z_4) \cdot\overrightarrow{z_4z_2}=
\sum_{z_1z_2z_3z_4\,:\,z_1=z} (v(z_2)-v(z_4))=0
$$
because the diagonals of type $z_2z_4$ form a closed cycle around a nonboundary vertex $z$.

Let us prove the ``if'' part. Assume that $\sum_{z_1z_2z_3z_4\,:\,z_1=z} *\nabla_{\!Q} u(z_1z_2z_3z_4) \cdot\overrightarrow{z_4z_2}=0$ for each $z\in Q^0-\partial Q$. For an oriented edge $z_2z_4\subset W$ denote $V(z_2z_4):=*\nabla_{\!Q} u(z_1z_2z_3z_4) \cdot\overrightarrow{z_2z_4}$, where $z_1z_2z_3z_4$ is the face of $Q$ containing $z_2z_4$. Then
for each bounded face $w_1w_2\dots w_m$ of the graph $W$ we have
$\sum_{1\le k\le m} V(w_kw_{k+1})=0$, where $w_{m+1}:=w_1$.
Then a function $v\colon W^0\to\mathbb{R}$ is well-defined by the formula $v(w_m):=\sum_{1\le k<m}V(w_kw_{k+1})$, where $w_1w_2\dots w_m$ is a path in the graph $W$ joining $w_m$ with a fixed vertex $w_1$. 
Define a function $v\colon B^0\to\mathbb{R}$ analogously. Consider the combined function $v\colon Q^0\to\mathbb{R}$. Then for each face $z_1z_2z_3z_4$ of the lattice $Q$ we have $v(z_4)-v(z_2)=V(z_2z_4)=*\nabla_{\!Q} u \cdot\overrightarrow{z_2z_4}$ and analogously
$v(z_3)-v(z_1)=*\nabla_{\!Q} u \cdot\overrightarrow{z_1z_3}$.
Thus $\nabla_{\!Q} v=*\nabla_{\!Q} u$,
and $v$ is conjugate to $u$.
\end{proof}

\begin{lemma} \label{cl-laplacian-form}
For each vertex $z\in Q^0$ we have
$$[\Delta_Q u](z)=
\sum_{z_1z_2z_3z_4\,:\,z_1=z} *\nabla_{\!Q} u(z_1z_2z_3z_4)\cdot\overrightarrow{z_4z_2},$$
where the sum is over all the faces $z_1z_2z_3z_4$ of $Q$ such that $z_1=z$ with the vertices listed clockwise.
\end{lemma}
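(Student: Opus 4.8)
The starting point is the definition $[\Delta_Q u](z)=-\partial E_Q(u)/\partial u(z)$ together with the face-by-face form \eqref{eq-def-energy} of the energy. Since $E_Q(u)=\sum_F|\nabla_{\!Q} u(F)|^2\,\mathrm{Area}(F)$ and the value $u(z)$ enters only the summands coming from faces $F$ containing $z$, the plan is to reduce everything to a local computation on one face. Concretely, I would prove that for a single face $F=z_1z_2z_3z_4$ with vertices listed clockwise
\[
\frac{\partial}{\partial u(z_1)}\Bigl(|\nabla_{\!Q} u(F)|^2\,\mathrm{Area}(F)\Bigr)=-{*}\nabla_{\!Q} u(F)\cdot\overrightarrow{z_4z_2},
\]
and then sum over all faces with $z_1=z$ and insert the outer minus sign. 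To make the condition ``$z_1=z$'' legitimate for every face through $z$, I would first note that both $\nabla_{\!Q} u(F)$ and $\mathrm{Area}(F)$ are invariant under the cyclic relabeling $z_1z_2z_3z_4\mapsto z_2z_3z_4z_1$: the two equations defining the gradient are merely interchanged (up to an overall sign on both sides), so their common solution is unchanged. Hence any face containing $z$ may be relabeled so that $z$ sits in the first slot, and each such face is then counted exactly once.

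For the local step I would solve the $2\times2$ system defining the gradient. Writing $p=\nabla_{\!Q} u(F)$, $a=u(z_3)-u(z_1)$, $b=u(z_4)-u(z_2)$, $d_1=\overrightarrow{z_1z_3}$, $d_2=\overrightarrow{z_2z_4}$, Cramer's rule gives $p=(b\,{*}d_1-a\,{*}d_2)/(d_1\times d_2)$, where $d_1\times d_2$ is the scalar cross product (nonzero, since the diagonals of a genuine quadrilateral are not parallel). Only $a$ depends on $u(z_1)$, with $\partial a/\partial u(z_1)=-1$, so $\partial p/\partial u(z_1)={*}d_2/(d_1\times d_2)$ and therefore
\[
\frac{\partial}{\partial u(z_1)}\bigl(|p|^2\,\mathrm{Area}(F)\bigr)=2\,p\cdot\frac{{*}d_2}{d_1\times d_2}\,\mathrm{Area}(F).
\]
The key simplification is the identity $d_1\times d_2=2A(F)$, where $A(F)$ is the signed area of $F$; this is immediate from the shoelace formula and holds for non-convex faces as well. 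For clockwise vertices $A(F)=-\mathrm{Area}(F)$, so $d_1\times d_2=-2\,\mathrm{Area}(F)$ and the area factor cancels, leaving $-p\cdot{*}d_2$. Using that $*$ is a rotation, so $p\cdot{*}d_2=-{*}p\cdot d_2$, together with $d_2=-\overrightarrow{z_4z_2}$, I rewrite $-p\cdot{*}d_2={*}p\cdot d_2=-{*}p\cdot\overrightarrow{z_4z_2}$, which is the displayed local formula.

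This is essentially a calibrated calculation, so the real work is bookkeeping rather than any single hard idea; the conceptual crux is the relabeling-invariance that legitimizes the ``$z_1=z$'' summation. I expect the main pitfalls to be the sign conventions: verifying that clockwise orientation forces $d_1\times d_2<0$, hence $d_1\times d_2=-2\,\mathrm{Area}(F)$, and keeping the antisymmetry $p\cdot{*}d_2=-{*}p\cdot d_2$ straight. A secondary point I would be careful about is that $\mathrm{Area}(F)=\tfrac12|d_1\times d_2|$ is invoked for arbitrary (possibly non-convex) faces; the shoelace computation handles this uniformly and removes any case distinction. As a consistency check I would confirm that the result reproduces the stated orthogonal-lattice laplacian $\sum_{z_1z_3\subset B:\,z_1=z}c(z_1z_3)(u(z_3)-u(z_1))$, which pins down every sign and identifies $c(z_1z_3)=-|d_2|^2/(d_1\times d_2)$ in that case.
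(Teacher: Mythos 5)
Your proof is correct; I verified each step. The Cramer's-rule expression $p=(b\,{*}d_1-a\,{*}d_2)/(d_1\times d_2)$ does satisfy the two defining equations (using ${*}d_1\cdot d_1=0$ and ${*}d_2\cdot d_1=-(d_1\times d_2)$), the shoelace identity $d_1\times d_2=2A(F)$ holds for arbitrary simple, possibly non-convex quadrilaterals, and the sign chain through $A(F)=-\mathrm{Area}(F)$ (clockwise labeling) and $p\cdot{*}d_2=-{*}p\cdot d_2$ lands exactly on the stated local formula, hence on the lemma after summation. The route differs from the paper's in how the local derivative is computed. The paper never inverts the $2\times 2$ system: it polarizes the quadratic form against the indicator function $v$ of the vertex $z$, writes $[\Delta_Q u](z)=\sum_F(\nabla_{\!Q}u\cdot\nabla_{\!Q}v)\,(-2\,\mathrm{Area}(F))$, replaces $-2\,\mathrm{Area}(F)$ by ${*}\overrightarrow{z_4z_2}\cdot\overrightarrow{z_1z_3}$ (the same diagonal/area identity you use), and then applies the algebraic identity $(c\cdot d)({*}a\cdot b)=({*}c\cdot b)(a\cdot d)-({*}c\cdot a)(b\cdot d)$ so that $\nabla_{\!Q}v$ only ever appears dotted with the diagonals, where it is evaluated directly from the definition of the discrete gradient as $v(z_2)-v(z_4)$ and $v(z_3)-v(z_1)$. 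Both arguments therefore hinge on the same geometric fact --- twice the signed area of a quadrilateral is the cross product of its diagonals --- and both need the cyclic-relabeling observation to collect the faces where $z$ sits in an arbitrary slot; you make this explicit, while the paper leaves it implicit in its final summation step. What your version buys is explicitness: a closed formula for $\nabla_{\!Q}u$, which also yields your consistency check $c(z_1z_3)=-|d_2|^2/(d_1\times d_2)$, and this indeed agrees with $i\frac{z_2-z_4}{z_1-z_3}$ for faces with orthogonal diagonals. What the paper's version buys is symmetry between $u$ and the test direction $v$, so that no inversion or differentiation of the linear system is needed --- at the price of the less transparent four-vector identity.
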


\begin{proof}
Let $v\colon Q^0\to\mathbb{R}$ be equal to $1$ at the vertex $z$ and $0$ at all the other vertices.
Differentiating the energy $E_Q(u)$ and applying
the identity
$(c\cdot d)(*a\cdot b)=(*c\cdot b)(a\cdot d)-(*c\cdot a)(b\cdot d)$ we get
\begin{align*}[\Delta_Q u](z)&=
\sum_{z_1z_2z_3z_4\subset Q}(\nabla_{\!Q} u \cdot\nabla_{\!Q} v) (-2\mathrm{Area}(z_1z_2z_3z_4))\\&=
\sum_{z_1z_2z_3z_4\subset Q}
(\nabla_{\!Q} u \cdot\nabla_{\!Q} v)
(*\overrightarrow{z_4z_2}\cdot\overrightarrow{z_1z_3})\\&=
\sum_{z_1z_2z_3z_4\subset Q}
\left((*\nabla_{\!Q} u \cdot \overrightarrow{z_1z_3})
(\nabla_{\!Q} v \cdot \overrightarrow{z_4z_2})-
(*\nabla_{\!Q} u \cdot \overrightarrow{z_4z_2})
(\nabla_{\!Q} v \cdot \overrightarrow{z_1z_3})
\right)\\&=
\sum_{z_1z_2z_3z_4\subset Q}
\left((*\nabla_{\!Q} u \cdot \overrightarrow{z_1z_3})
(v(z_2)-v(z_4))-
(*\nabla_{\!Q} u \cdot \overrightarrow{z_4z_2})
(v(z_3)-v(z_1))
\right)\\&=
\sum_{z_1z_2z_3z_4\,:\, z_1=z} *\nabla_{\!Q} u\cdot\overrightarrow{z_4z_2}.\\[-1.6cm] 
\end{align*}
\end{proof}

\begin{proof}[Proof of Variational Principle~\ref{cl-laplacian}]
By Lemmas~\ref{cl-analyticity}--\ref{cl-laplacian-form} a function is discrete harmonic if and only if
its laplacian vanishes at nonboundary vertices. By Convexity Principle~\ref{cl-min-energy} the latter is equivalent to having minimal energy among the functions with the same boundary values.
\end{proof}

\subsection{Existence and Uniqueness Theorem}

\begin{proof}[Proof of Existence and Uniqueness Theorem~\ref{cl-dirichlet-problem}]
By Convexity Principle~\ref{cl-min-energy} the energy $E\colon \mathbb{R}^{Q^0-\partial Q}\to\mathbb{R}$ has a unique global minimum $u\in \mathbb{R}^{Q^0-\partial Q}$. By Variational Principle~\ref{cl-laplacian} the function $u$ is the solution of the Dirichlet problem and it is unique.
\end{proof}

\begin{remark} \label{rem-Riemann}
Define a \emph{discrete Riemann surface} to be a
cell decomposition $Q$ of an orientable surface with quadrilateral faces together with an identification of each face with a quadrilateral $z_1z_2z_3z_4\subset\mathbb{C}$ by an orientation preserving homeomorphism. 
(No agreement of such identifications for different faces is assumed.)
The results of Sections~\ref{ssec-energy} and~\ref{sec-variational-principle} remain true for an arbitrary simply-connected discrete Riemann surface, not necessarily a quadrilateral lattice in the complex plane, if the faces are replaced by the corresponding quadrilaterals $z_1z_2z_3z_4\subset\mathbb{C}$ in all our constructions.
\end{remark}

\subsection{Maximum Principle}\label{ssec-maximum}

Let us discuss the case of orthogonal lattices in more detail. 
In this case $c(z_1z_3)=i\frac{z_2-z_4}{z_1-z_3}>0$, hence 
the value of a discrete harmonic function $u$ at a nonboundary vertex of $B$ equals to the weighted mean of the values at the neighbors. This immediately implies the following known result.

\begin{maximum-principle} \label{cl-maximum-principle} Let $Q$ be an orthogonal lattice and let $u\colon Q^0\to \mathbb{R}$ be a discrete harmonic function. Then
\vspace{-0.4cm}
$$
\max_{z\in Q^0}u(z)=\max_{w\in Q^0\cap \partial Q}u(w)
\quad\text{ and }\quad
\max_{z\in B^0}u(z)=\max_{w\in B^0\cap \partial Q}u(w).
$$
\end{maximum-principle}


For an orthogonal lattice Existence and Uniqueness Theorem~\ref{cl-dirichlet-problem} is an immediate corollary of Maximum Principle~\ref{cl-maximum-principle} and the finite-dimensional Fredholm alternative.
However, the principle does not hold for nonorthogonal lattices in general.

\begin{example}\label{ex-nomax} (S.~Tikhomirov)
Let $M>1$ and let $Q$ be the lattice formed by the quadrilateral with the vertices $0,\cot(\pi/8),\sqrt{2}M(\cot(\pi/8)+i),i$, and the $3$ other quadrilaterals obtained by symmetries with respect to the origin and the coordinate axes; see Figure~\ref{fig-no-max-pr}. Define a discrete analytic function $f\colon Q^0\to\mathbb{C}$ 
and a discrete harmonic function $u:=\Real f$
by the following table:
\begin{center}
\begin{tabular}{r|ccccc}
\hline
$z$ & $0$ & $\pm i$ & $\pm\cot\frac{\pi}{8}$ &
  $\pm \sqrt{2}M(\cot\frac{\pi}{8}+i)$ &
  $\pm \sqrt{2}M(\cot\frac{\pi}{8}-i)$
\\\hline
$f(z)$ & $M(1+i)$ & $1$ & $0$ & $0$ & $2Mi$
\\\hline
$u(z)$ & $M$ & $1$ & $0$ & $0$ & $0$
\\\hline
\end{tabular}
\end{center}
Then $\max_{Q^0}u/\max_{\partial Q}u=M$ can be arbitrarily large for large $M$. If $B$ is the graph formed by $4$ diagonals from the origin then $\max_{B^0}u>0$ whereas $\max_{B^0\cap\partial Q}u =0$.
\end{example}

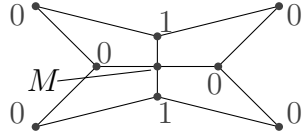
\begin{figure}[htbp]
\begin{center}
\definecolor{uququq}{rgb}{0.25,0.25,0.25}
\begin{tikzpicture}[line cap=round,line join=round,>=triangle 45,x=0.4cm,y=0.4cm]
\clip(-3.94,-1.26) rectangle (11.2,3.24);
\draw (0,3)-- (2,1);
\draw (0,3)-- (4,2);
\draw (4,2)-- (4,1);
\draw (2,1)-- (4,1);
\draw (2,1)-- (0,-1);
\draw (4,0)-- (0,-1);
\draw (4,1)-- (4,0);
\draw (4,2)-- (8,3);
\draw (6,1)-- (8,3);
\draw (4,1)-- (6,1);
\draw (4,0)-- (8,-1);
\draw (6,1)-- (8,-1);
\draw (0.26,0.56) node {$M$}; 
\draw (0.72,0.52)-- (3.78,0.88);
\fill [color=uququq] (0,3) circle (1.5pt);
\draw[color=uququq] (-0.6,2.7) node {$0$}; 
\fill [color=uququq] (8,-1) circle (1.5pt);
\draw[color=uququq] (8.5,-0.54) node {$0$}; 
\fill [color=uququq] (8,3) circle (1.5pt);
\draw[color=uququq] (8.5,2.7) node {$0$};
\fill [color=uququq] (0,-1) circle (1.5pt);
\draw[color=uququq] (-0.6,-0.54) node {$0$};
\fill [color=uququq] (4,2) circle (1.5pt);
\draw[color=uququq] (4.28,2.5) node {$1$};
\fill [color=uququq] (2,1) circle (1.5pt);
\draw[color=uququq] (2.26,1.44) node {$0$};
\fill [color=uququq] (6,1) circle (1.5pt);
\draw[color=uququq] (5.88,0.44) node {$0$};
\fill [color=uququq] (4,0) circle (1.5pt);
\draw[color=uququq] (4.26,-0.54) node {$1$};
\fill [color=uququq] (4,1) circle (1.5pt);
\end{tikzpicture}
\caption{A discrete harmonic function on a nonorthogonal quadrilateral lattice not satisfying the maximum principle. The values of the function are shown near the vertices.}
\label{fig-no-max-pr}
\end{center}
\end{figure}

\subsection{Green's Identity}\label{ssec-harmonic}


Let us state one more result specific for orthogonal lattices, which is required for the sequel.


\begin{Green-identity}
\label{cl-green} Let $Q$ be an orthogonal lattice
and $u,v\colon B^0\to\mathbb{C}$ be arbitrary functions. Then
$$
\sum_{z\in 
B^0}
\left[ u\Delta_Q v - v\Delta_Q u  \right](z)=
0.
$$
\end{Green-identity}

\begin{proof} For an orthogonal lattice the energy splits as $E_Q(u)=E_B(u)+E_W(u)$, where $E_B(u)$ and $E_W(u)$ depend only on the values of the function $u$ at the vertices of $B$ and $W$, respectively. For an arbitrary homogeneous quadratic form $E_B(u)$ we have
$\sum_{z\in B^0}\left(u(z)\frac{\partial E_B(v)}{\partial v(z)}-v(z)\frac{\partial E_B(u)}{\partial u(z)}\right)=0$,
which is equivalent to the required identity.
\end{proof}


The identity contains no special boundary terms in contrast to \cite[Formula (2.4)]{Chelkak-Smirnov-08} (because our discrete laplacian  $\Delta_Q u$ is not equal to the one of \cite{Chelkak-Smirnov-08} at the boundary).
For nonorthogonal lattices Green's identity does not remain true unless one replaces summation over $B^0$ by summation over~$Q^0$.

\section{Convergence}\label{sec-scaling-limit}

In this section we prove Convergence Theorem~\ref{th-uniform-approx} and the results stated in Section~\ref{ssec-plan}. We also prove 
Friedrichs Inequality Lemma~\ref{cl-strip-estimate} below which perhaps may be useful for further investigations.


\subsection{Geometric preliminaries}\label{ssec-geometry}

Let us start with some basic estimates involving the lattice eccentricity. 
In what follows we use the following notation:
\begin{itemize}
\item $\mathrm{Const}$ is a positive constant (like $2\pi$ or $10^{100}$) that does not depend on any parameters of a configuration under consideration (e.g., $h$, $e$, the shape of $Q$);
$\mathrm{Const}$ may denote distinct constants at distinct places of the text (e.g., in the sides of a formula like $2\cdot\mathrm{Const}\le\mathrm{Const}$);
\item $\mathrm{Const}_{x,y,z}$ is a positive constant depending only on the parameters $x,y,z$;
\item $x_n\preceq y_n$ means that $\lim\sup (x_n-y_n) \le 0$;
\item $x_n\rightrightarrows x$ means that $x_n$ uniformly converges to $x$;
\item $E_{R}(u)$ is the sum~(\ref{eq-def-energy}) over all the faces $z_1z_2z_3z_4$ of $Q$ containing an edge of a subgraph $R\subset B$.
\end{itemize}


\begin{path-energy-estimate}\label{cl-obvious} For any path $w_0w_1\dots w_{m}\subset B$ we have
$$
E_{w_0\dots w_m}(u)\ge \mathrm{Const}\frac{(u(w_m)-u(w_0))^2}{me^2}.
$$
\end{path-energy-estimate}

\begin{proof}
Estimating $|\nabla_{\!Q} u|$ through its projection, using condition~(D) from Section~\ref{ssec-state}, 
the inequalities $e\ge 1$ and $\sin(1/e)\ge\mathrm{Const}/e$, 
and the Schwarz inequality we get
\begin{align*}
E_{w_0\dots w_m}(u)&=
\sum|\nabla_{\!Q} u|^2\mathrm{Area}(z_1z_2z_3z_4)\\
&\ge
\sum \frac{(u(z_1)-u(z_{3}))^2}{|z_1z_3|^2}\cdot\frac12
|z_1z_3|\,|z_2z_4|\sin\angle(z_1z_3,z_2z_4)\\
&\ge
\sum_{k=1}^{m}\frac{(u(w_k)-u(w_{k-1}))^2}{2e}\cdot\sin\frac{1}{e}\\
&\ge
\mathrm{Const}\frac{(u(w_m)-u(w_0))^2}{me^2},
\end{align*}
where the first two sums are over all faces $z_1z_2z_3z_4$ of $Q$ containing an edge of the path $w_0\dots w_m$.
\end{proof}

\begin{projection-lemma}\label{cl-projection}
For any face $z_1z_2z_3z_4$ of the lattice $Q$ and any vector $\overrightarrow v$ we have
$$
|\overrightarrow v|\le \mathrm{Const}\cdot e\cdot\left(\frac{|\overrightarrow v\cdot\overrightarrow{z_1z_3}|}{|{z_1z_3}|}+
\frac{|\overrightarrow v\cdot\overrightarrow{z_2z_4}|}{|{z_2z_4}|}\right).
$$
\end{projection-lemma}

\begin{proof}
Since $\angle(\overrightarrow{z_1z_3},\overrightarrow{z_2z_4})=
\pm\angle(\overrightarrow{v},\overrightarrow{z_1z_3})
\pm\angle(\overrightarrow{v},\overrightarrow{z_2z_4})$ by condition~(D) from Section~\ref{ssec-state}
it follows that at least one of the angles on the right-hand side, say, the first one, does not belong to the interval $(\frac{\pi}{2}-\frac{1}{2e},\frac{\pi}{2}+\frac{1}{2e})$.
Then
\vspace{-0.6cm}
$$
|\overrightarrow{v}|\le
\csc \frac{1}{2e}\frac{|\overrightarrow v\cdot\overrightarrow{z_1z_3}|}{|{z_1z_3}|}\le
\mathrm{Const}\cdot e\cdot \frac{|\overrightarrow v\cdot\overrightarrow{z_1z_3}|}{|{z_1z_3}|}. \vspace{-0.8cm}
$$
\end{proof}

\begin{lemma-on-vertices-in-rectangle}\label{cl-points} 
A rectangle $r\times h$ with the side $r>h$ contains at most $\mathrm{Const}\cdot er/h$ vertices of the graph~$B$.
\end{lemma-on-vertices-in-rectangle}

\begin{proof} The rectangle $r\times h$ can be covered by $\mathrm{Const}\cdot[r/h]$ discs of radius $h/2$. 
Then by condition~(U) from Section~\ref{ssec-state}
the number of vertices in the rectangle is less than $\mathrm{Const}\cdot er/h$. 
\end{proof}

\begin{diameter-lemma}\label{cl-diam} The diameter of each bounded face of the graphs $B$ and $W$ is at most $h$.
\end{diameter-lemma}

\begin{proof} A bounded face of the graph $B$ contains a vertex of the graph $W$. The vertex is joined by edges of the graph $Q$ with all the vertices of the face (and by ``half-edges'' of the graph $W$ with the break points of the $2$-segment edges of $B$ in nonconvex faces of $Q$). Since the edges of $Q$ have length at most $h/2$ it follows that the diameter of the face of $B$ is at most $h$.
\end{proof}

\subsection{Convergence of energy} \label{ssec-energy-bound}

For the proof of Energy Convergence Lemma~\ref{l-energy-convergence} we need the following lemma.

\begin{gradient-approximation-lemma}\label{cl-maximal-size}
We have $|\nabla g-\nabla_{\!Q} (g\left|_{Q^0}\right.)|\le \mathrm{Const}\cdot eh\max_{z\in\mathrm{Conv}(\partial Q)}|D^2 g(z)|$.
\end{gradient-approximation-lemma}


\begin{proof}
Consider a face $z_1z_2z_3z_4$ of the lattice $Q$. By the Rolle theorem there is a point $z\in z_1z_3$ (possibly outside $z_1z_2z_3z_4$ but inside the convex hull $\mathrm{Conv}(\partial Q)$) such that
$\left(\nabla g(z)-[\nabla_{\!Q} g](z_1z_2z_3z_4)\right)\cdot \overrightarrow{z_1z_3}/|\overrightarrow{z_1z_3}|=0$.
Thus $(\nabla g-\nabla_{\!Q} g)\cdot \overrightarrow{z_1z_3}/|\overrightarrow{z_1z_3}|\le \mathrm{Const}\cdot h \max_{z\in\mathrm{Conv}(z_1z_2z_3z_4)}|D^2 g(z)|$ in 
$z_1z_2z_3z_4$.
The same inequality holds with $z_1z_3$ replaced by $z_2z_4$.
By Projection Lemma~\ref{cl-projection} the lemma follows.
\end{proof}





\begin{proof}[Proof of Energy Convergence Lemma~\ref{l-energy-convergence}]
Denote by $\widehat{Q}_n$ the domain enclosed by the curve $\partial Q_n$. Since $Q_n$ approximates $\Omega$ and $\partial\Omega$ is smooth it follows that some neighborhood $\Omega'$ of $\Omega$ contains all the lattices $Q_n$ and $\mathrm{Area}(\Omega-\widehat{Q}_n),
\mathrm{Area}(\widehat{Q}_n-\Omega)\to 0$ as $n\to \infty$. Since the domain $\Omega$ is bounded and the function $g\colon\mathbb{C}\to\mathbb{R}$ is smooth 
it follows that $\nabla g$ is bounded in $\mathrm{Conv}(\Omega')$. Thus the integrals $E_{\Omega}(g)$, $E_{\widehat{Q}_n}(g)$ exist and $E_{\Omega}(g)-E_{\widehat{Q}_n}(g)=
E_{\Omega-\widehat{Q}_n}(g)-E_{\widehat{Q}_n-\Omega}(g)\to 0$ as $n\to \infty$. By Gradient Approximation Lemma~\ref{cl-maximal-size} we get
$E_{\widehat{Q}_n}(g)-E_{Q_n}(g\left|_{Q^0_n}\right.)\to 0$ as $n\to \infty$, and the lemma follows.
\end{proof}

\begin{remark}\label{rem-rough-boundary} The same argument proves the estimate $E_{Q_n}(g\left|_{Q^0_n}\right.)\preceq E_{\mathrm{Cl}\,\Omega}(g)$, if no smoothness assumptions are imposed on the boundary $\partial\Omega$ (the ``inequality'' can be strict for $\mathrm{Area}(\partial \Omega)>0$). 
\end{remark}

\begin{remark} For a harmonic function $v\colon \Omega\to\mathbb{R}$ and a sequence of functions $v_n\colon\Omega\to\mathbb{R}$ conditions $v_n=v$ on $\partial \Omega$ and $E_\Omega (v_n)\to E_\Omega (v)$ do not necessarily imply that $v_n\to v$ pointwise. For instance, take a continuous function $v_n\colon\mathbb{D}^2\to\mathbb{R}$ such that $v_n(z)=0$ for $|z|=1$, $v_n(z)=1$ for $|z|\le 1/n$, and $v_n(z)$ is harmonic in the ring $1/n<|z|<1$. Then $E_{\mathbb{D}^2}(v_n)=1/\ln n\to 0$ as $n\to\infty$ but $v_n(0)=1\not\to 0=v(0)$.
\end{remark}



\subsection{Equicontinuity}

First we prove Equicontinuity Lemma~\ref{l-equicontinuity} for a square lattice and then for the general orthogonal one. The latter proof is essentially the same but requires more technical details.

\begin{proof}[Proof of Equicontinuity Lemma~\ref{l-equicontinuity} for square lattices] First assume that $B$ (and also $Q$)
is a part of a square lattice and the segment joining $z$ and $w$ is contained in the graph $B$; see Figure~\ref{fig-rectangles}. 
For now denote by $h$ the step of the square lattice $B$. 
Let $R_m$ be the rectangle $2mh\times (2mh+|z-w|)$ centered at the point $(z+w)/2$ with the side $2mh$ orthogonal to the segment $zw$; see Figure~\ref{fig-rectangles}. Assume further that $m\in\mathbb{Z}$ and $1\le m\le\frac{r-|z-w|}{2h}$ so that $R_m\subset R$.
Denote $\partial B:=\partial Q\cap B^0$ and
$$
\delta:=\left|u(z)-u(w)\right|-\max_{z',w'\in R\cap \partial B}\left|u(z')-u(w')\right|.
$$
The required estimate~\eqref{eq-l-equicontinuity} holds automatically for $\delta\le 0$. Assume further that $\delta>0$. 

\begin{figure}[htbp]
\begin{center}
\definecolor{ffffff}{rgb}{1,1,1}
\definecolor{zzttqq}{rgb}{0.6,0.2,0}
\definecolor{qqqqff}{rgb}{0,0,1}
\begin{tikzpicture}[line cap=round,line join=round,>=triangle 45,x=0.8cm,y=0.8cm]
\draw [color=qqqqff,dash pattern=on 1pt off 1pt, xstep=0.8cm,ystep=0.8cm] (-2.81,-0.77) grid (6.81,6.73);
\clip(-2.81,-0.77) rectangle (6.81,6.73);
\draw [line width=1.6pt,color=zzttqq] (1,3)-- (4,3);
\draw [line width=1.6pt,color=zzttqq] (0,4)-- (5,4);
\draw [line width=1.6pt,color=zzttqq] (5,4)-- (5,2);
\draw [line width=1.6pt,color=zzttqq] (5,2)-- (0,2);
\draw [line width=1.6pt,color=zzttqq] (0,2)-- (0,4);
\draw [line width=1.6pt,color=zzttqq] (-1,5)-- (6,5);
\draw [line width=1.6pt,color=zzttqq] (6,1)-- (-1,1);
\draw [line width=1.6pt,color=zzttqq] (-1,1)-- (-1,5);
\draw [line width=1.6pt,color=zzttqq] (-2,6)-- (-2,0);
\draw [line width=1.6pt,color=zzttqq] (-2,0)-- (6,0);
\draw [line width=1.6pt,color=zzttqq] (-2,6)-- (6,6);
\draw (5,6)-- (5,5);
\draw [line width=1.6pt,dash pattern=on 1pt off 1pt on 2pt off 4pt,color=qqqqff] (6,-0.77) -- (6,6.73);
\draw [color=ffffff] (6,6)-- (7,6);
\draw [color=ffffff] (6,5)-- (7,5);
\draw [color=ffffff] (6,4)-- (7,4);
\draw [color=ffffff] (6,3)-- (7,3);
\draw [color=ffffff] (6,2)-- (7,2);
\draw [color=ffffff] (6,1)-- (7,1);
\draw [color=ffffff] (6,0)-- (7,0);
\fill [color=qqqqff] (1,3) circle (1.5pt);
\draw[color=qqqqff] (1.2,3.31) node {$z$};
\fill [color=qqqqff] (4,3) circle (1.5pt);
\draw[color=qqqqff] (4.33,3.32) node {$w$};
\fill [color=qqqqff] (0,2) circle (1.5pt);
\draw[color=qqqqff] (0.39,1.56) node {$z_1$};
\draw[color=zzttqq] (2.51,1.64) node {$R_1$};
\draw[color=zzttqq] (2.52,0.63) node {$R_2$};
\fill [color=qqqqff] (6,0) circle (1.5pt);
\fill [color=qqqqff] (0,0) circle (1.5pt);
\draw[color=qqqqff] (0.37,-0.37) node {$z_3$};
\draw[color=black] (5.35,5.55) node {$h$};
\draw[color=qqqqff] (6.45,5.65) node {$\partial B$};
\fill [color=qqqqff] (6,0) circle (1.5pt);
\draw[color=qqqqff] (5.49,-0.4) node {$w_3$};
\fill [color=qqqqff] (4,4) circle (1.5pt);
\draw[color=qqqqff] (4.45,4.32) node {$w_1$};
\end{tikzpicture}
\caption{The rectangles $R_m$ and the points $z_m,w_m$ from the proof of Lemma~\ref{l-equicontinuity}.}
\label{fig-rectangles}
\end{center}
\end{figure}
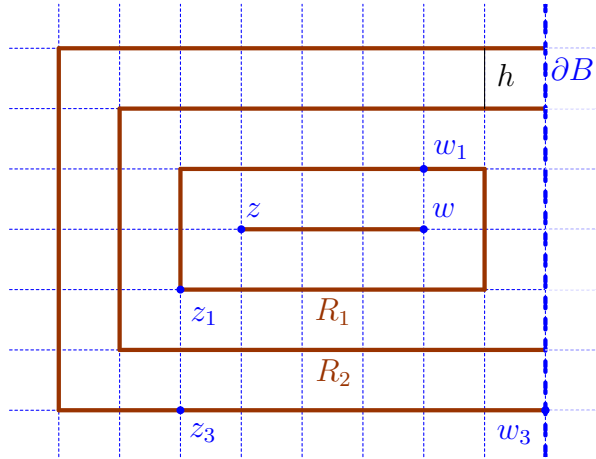

Let us prove that the graph $\partial R_m \cap B$
has two 
vertices $z_m,w_m$ joined by a path in $\partial R_m \cap B$ such that 
$\left|u(z_m)-u(w_m)\right|\ge \delta/2>0$.
Indeed, assume without loss of generality that $u(z)\ge u(w)$.  Consider the following two cases: (1) $R_m \cap \partial B=\emptyset$; and (2) $R_m \cap \partial B\ne\emptyset$. 

\emph{Case} (1). Since $R_m \cap \partial B=\emptyset$ it follows by Maximum Principle~\ref{cl-maximum-principle} that
the function $u$ attains its maximum and minimum over $R_m\cap B^0$ at some vertices $z_m,w_m\in \partial R_m\cap B^0$, respectively. Clearly, the vertices are joined by a path in the graph $\partial R_m \cap B$. Since $u(z_m)\ge u(z)\ge u(w)\ge u(w_m)$, the required inequality follows.

\emph{Case} (2). By Maximum Principle~\ref{cl-maximum-principle}
the function $u$ attains its maximum and minimum over $R_m\cap B^0$ 
at some vertices $z',w'\in (\partial R_m\cap B^0)\cup (R_m\cap \partial B)$, respectively. Without loss of generality we have the following $3$ possibilities:  
\begin{itemize}
\item[(i)] $z',w'\in \partial R_m\cap B^0$; 
\item[(ii)] $z'\in\partial R_m\cap B^0$, $w'\in R_m\cap \partial B$;
\item[(iii)] $z',w'\in R_m\cap \partial B$.
\end{itemize}
In subcase (i) since both $R_m \cap \partial B\ne\emptyset$ and $\partial R_m \cap B^0\ne\emptyset$ it follows that $\partial R_m \cap\partial B\ne\emptyset$. Join the vertices $z',w'$ with some boundary vertices $z'',w''\in \partial R_m \cap \partial B$ by two paths in the graph $\partial R_m \cap B$. Then we have $u(z')-u(z'')+u(w'')-u(w')\ge u(z)-u(w)-(u(z'')-u(w''))\ge \delta$. Hence without loss of generality $u(z')-u(z'')\ge \delta/2$, and $z_m:=z'$, $w_m:=z''$ are the required vertices. In subcase (ii) join the vertex $z_m:=z'\in\partial R_m\cap B$ with a boundary vertex $w_m\in \partial R_m \cap \partial B$ by a path in the graph $\partial R_m \cap B$. Then $u(z_m)-u(w_m)\ge u(z)-u(w)-(u(w_m)-u(w'))\ge \delta$, and the required inequality follows. In subcase (iii) we get 
$0\ge u(z)-u(w) - (u(z')-u(w'))\ge \delta$ which contradicts to the assumption $\delta >0$; thus this subcase is impossible. So we have found the required pair of points $z_m,w_m$ in each possible case.

Let us estimate the energy of the graph $\partial R_m\cap B$.
The length of the minimal path joining the points $z_m$ and $w_m$ 
is at most the number of vertices in the graph $\partial R_m\cap B$, which is at most $8m+2|z-w|/h$.
By Path Energy Lemma~\ref{cl-obvious} we get 
$$
E_{\partial R_m\cap B}(u) \ge \mathrm{Const}\frac{(u(z_m)-u(w_m))^2}{e^2(8m+2|z-w|/h)}
    \ge \mathrm{Const}\frac{\delta^2}{e^2}\cdot \frac{h}{mh+|z-w|/4}.
$$
Summing these inequalities for $m$ from $1$ to $\left[\frac{r-|z-w|}{2h}\right]$ and estimating the sum via an integral we get
\begin{multline*}
E_Q(u)\ge \sum_{m=1}^{\left[\frac{r-|z-w|}{2h}\right]}E_{\partial R_m\cap B}(u)
    \ge 
     \mathrm{Const}\frac{\delta^2}{e^2}\int\limits_{h}^{\frac{r-|z-w|}{2}}
     \frac{dt}{t+|z-w|/4}\\
    \ge
    \mathrm{Const}\frac{\delta^2}{e^2}\ln
    \frac{2r-|z-w|}{4h+|z-w|}
    \ge
    \mathrm{Const}\frac{\delta^2}{e^2}\ln
    \frac{r}{3\max\{|z-w|,h\}}.
\end{multline*}
This is equivalent to the required inequality.
\end{proof}

\begin{proof}[Proof of Equicontinuity Lemma~\ref{l-equicontinuity} in the general case] 
Consider an auxiliary square lattice
with edges parallel and orthogonal to $zw$
and with the step $h$ equal to twice the maximal edge length of the lattice ~$Q$. 
Define the rectangles $R_m$ literally as above. Let $F$ be the outer face of the lattice~$Q$.

The boundaries of the rectangles $R_m$ and $R_{m-1}$ are separated by a simple closed broken line lying in the set $B\cup F$.
Indeed, otherwise $\partial R_m$ and $\partial R_{m-1}$ would be joined by a path in the complement to the set $B\cup F$ and thus the graph $B$ would have a bounded face of diameter $>h$, which contradicts to Diameter Lemma~\ref{cl-diam}. Denote by $\tilde R_m$ the polygon bounded by our broken line.


The number of edges in the graph $\partial\tilde R_m\cap B$ is not greater than the number of vertices of the graph $B$ lying in the strip between $\partial R_m$ and $\partial R_{m-1}$. Thus by Rectangle Capacity Lemma~\ref{cl-points} the graph $\partial\tilde R_m\cap B$ contains less than $\mathrm{Const}\cdot e(4m+|z-w|/h)$ edges.
Now the same energy estimates as for the square lattice (with the rectangle $R_m$ replaced by the polygon $\tilde R_m$)
prove the required inequality.
\end{proof}

\begin{remark}\label{rem-relaxed}
Equicontinuity Lemma~\ref{l-equicontinuity} and its proof remain true if condition~(D) in the definition of the eccentricity $e$ is relaxed to the following one:
\begin{itemize}
\item[(D')] for each pair of intersecting edges $z_1z_3\subset B$ and $z_2z_4\subset W$ we have $|z_1z_3|/|z_2z_4|<\mathrm{Const}$ and  $1/\mathrm{Const}<\angle (z_1z_3,z_2z_4)<\pi - 1/\mathrm{Const}$ (no lower bound for the ratio $|z_1z_3|/|z_2z_4|$ is assumed). 
\end{itemize}
Indeed, condition (D) was used only in the proof of Path Energy Lemma~\ref{cl-obvious}, which remains true with relaxed condition~(D'). 
\end{remark}

\begin{remark}\label{rem-close-points}
Equicontinuity Lemma~\ref{l-equicontinuity} and its proof remain true 
for 
a discrete Riemann surface $Q\subset\mathbb{R}^2$ (in the sense of Remark~\ref{rem-Riemann}) with all the quadrilaterals $z_1z_2z_3z_4$ having orthogonal diagonals.
\end{remark}

\begin{remark} 
Our proof of Equicontinuity Lemma~\ref{l-equicontinuity} cannot be generalized to nonorthogonal lattices because
it essentially uses
Maximum Principle~\ref{cl-maximum-principle} not holding for them. 
\end{remark}

\subsection{Approximation of laplacian}\label{ssec-precompactness}

First we prove Laplacian Approximation Lemma~\ref{cl-approximation} in several particular cases and then combine them together. Throughout this subsection $z$ denotes the coordinate in the complex plane $\mathbb{C}$, so that, e.g., $\Real z$ denotes the function $z\mapsto \Real z$.

\begin{proof}[Proof of Lemma~\ref{cl-approximation} for $g(z)=1$]
By Lemma~\ref{cl-laplacian-form} the laplacian of a constant function vanishes, and the lemma follows.
\end{proof}

\begin{proof}[Proof of Lemma~\ref{cl-approximation} for $g(z)=\Real z$]
Clearly, the function
$f(z)=z$ is discrete analytic. Thus $g(z)=\Real z$ is discrete harmonic. Then by Variational Principle~\ref{cl-laplacian} it follows that $[\Delta_Q\Real z](w)=0$ for each vertex $w\in Q^0-\partial Q$. On the other hand, $\Delta \Real z=0$ as well, and the lemma follows.
\end{proof}

\begin{proof}[Proof of Lemma~\ref{cl-approximation} for $g(z)=\Imaginary z$]
This is proved by the previous argument with $iz$ instead of~$z$.
\end{proof}

For the next case of the lemma we need the following lemma.

\begin{lemma}\label{cl-last} The number of faces $z_1z_2z_3z_4$ such that $z_1\in B^0\cap R$, $z_3\not\in R$ is at most $\mathrm{Const}\cdot er/h$.
\end{lemma}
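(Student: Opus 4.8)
The plan is to count the relevant faces through the $B$-diagonals they contribute. Every face $z_1z_2z_3z_4$ counted in the statement supplies exactly one edge $z_1z_3\subset B$, and this correspondence is injective, since each edge of $B$ is a diagonal of a unique face of $Q$. Moreover, as $z_1\in R$, $z_3\notin R$, and the square $R$ is convex, the straight segment $z_1z_3$ meets $\partial R$ at some point $c$. Thus it suffices to bound the number of edges $z_1z_3\subset B$ that cross $\partial R$ from the inside.

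First I would localize these edges near $\partial R$. Each such edge is a diagonal of a face of $Q$, whose four sides have length at most $h/2$, so $|z_1z_3|\le h$ (this also follows from Diameter Lemma~\ref{cl-diam}). Since $c$ lies on the segment $z_1z_3$, we get $|z_1-c|,|z_3-c|\le|z_1z_3|\le h$, hence $\mathrm{dist}(z_1,\partial R),\mathrm{dist}(z_3,\partial R)\le h$. Therefore both endpoints of every relevant edge lie in the frame $F:=\{p:\mathrm{dist}(p,\partial R)\le h\}$, a neighborhood of width $2h$ of the boundary of the square of side $r$.

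Next I would count. The frame $F$ is covered by at most $\mathrm{Const}$ rectangles $r'\times h$ with $r'\le\mathrm{Const}\cdot r$, so by Rectangle Capacity Lemma~\ref{cl-points} it contains at most $N\le\mathrm{Const}\cdot er/h$ vertices of $B$. The graph $B$ is \emph{simple} and \emph{planar}: its edges lie in the pairwise disjoint interiors of the faces of $Q$ (the $2$-segment broken edges in nonconvex faces also stay inside their face), so distinct edges do not cross, and no two faces can share a pair of opposite vertices, because any two faces meet in at most an edge and opposite vertices are nonadjacent. Hence the number of edges of $B$ joining two of the $N$ vertices contained in $F$ is at most $3N-6$. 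Combining, the number of relevant edges, and thus of relevant faces, is at most $3N\le\mathrm{Const}\cdot er/h$.

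The one point requiring care is obtaining a bound \emph{linear} in the eccentricity $e$ rather than quadratic. The naive route, counting the vertices $z_1$ in the frame and multiplying by the maximal $B$-degree, loses a second factor of $e$: condition~(D) bounds the diagonals but not the corner angles of faces, so a single vertex may have $\sim e$ neighbors in $B$. Replacing this degree bound by planarity of $B$ (edges $\le 3\cdot$vertices) is what keeps the estimate linear in $e$; this is the main, and essentially the only, obstacle.
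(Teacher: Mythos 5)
Your proof is correct and takes essentially the same route as the paper's: localize the vertices of the relevant faces in the $h$-neighborhood of $\partial R$ (via the Diameter Lemma or the edge-length bound), count them with Rectangle Capacity Lemma~\ref{cl-points}, and convert the vertex count into a face count using planarity. Your passage through the $B$-diagonals and the bound $3N-6$ on edges of a simple planar graph is just an explicit form of the paper's appeal to the Euler formula, including the point you flag about keeping the estimate linear in $e$.
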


\begin{proof}
By Diameter Lemma~\ref{cl-diam} it follows that the vertices of such faces are contained in the $h$-neighborhood of $\partial R$. By Rectangle Capacity Lemma~\ref{cl-points} the number of these vertices is at most $\mathrm{Const}\cdot er/h$. Then by the Euler formula for planar graphs it follows that the number of faces is also at most $\mathrm{Const}\cdot er/h$.
\end{proof}

\begin{proof}[Proof of Lemma~\ref{cl-approximation} for $g(z)=|z|^2$]
For a face $z_1z_2z_3z_4$ of the lattice $Q$ denote by $z'$ the intersection point of the bisectors of the diagonals $z_1z_3$ and $z_2z_4$. Clearly, then $\nabla_{\!Q}|z-z'|^2(z_1z_2z_3z_4)=0$.
Using the two previous cases of the lemma and Lemmas~\ref{cl-analyticity}, \ref{cl-laplacian-form} we get
\begin{align*}
[\Delta_Q|z|^2](w)
&=[\Delta_Q|z-w|^2](w)
+[\Delta_Q(2\Real \bar wz-|w|^2)](w)\\
&=\sum_{z_1z_2z_3z_4\,:\,z_1=w}
(*\nabla_{\!Q}|z-w|^2)\cdot \overrightarrow{z_4z_2}\\
&=\sum_{z_1z_2z_3z_4\,:\,z_1=w}\left(
2*\!\nabla_{\!Q}\Real(\overline{(z'-w)}z)+ *\nabla_{\!Q}|z-z'|^2\right)\cdot \overrightarrow{z_4z_2}\\
&=\sum_{z_1z_2z_3z_4\,:\,z_1=w}
2\nabla_{\!Q}\Imaginary(\overline{(z'-w)}z)\cdot \overrightarrow{z_4z_2}\\
&=\sum_{z_1z_2z_3z_4\,:\,z_1=w}
2\Imaginary\left(\overline{(z'-w)}(z_2-z_4)\right)\\
&=\sum_{z_1z_2z_3z_4\,:\,z_1=w}4\mathrm{Area}(wz_2z'z_4).
\end{align*}
Here and in the next paragraph the area of a closed broken line is understood in oriented sense.

Let us estimate the difference between the sum of such areas and the area of the square $R$. Assume for simplicity that all the faces of $Q$ are convex (otherwise the diagonals $z_2z_4$ in the argument below should be replaced by edges $z_2z_4\subset W$ which could be $2$-segment broken lines). First, denote by $R_Q$ the union of the oriented diagonals $\overrightarrow{z_2z_4}$ of all the faces $z_1z_2z_3z_4$ of $Q$ such that $z_1\in R\cap B^0$ and $z_3\not\in R$. 
It is the oriented boundary of the union of all the triangles of the form $z_1z_2z_4$ with $z_1\in R\cap B^0$. Since $R_Q$ is  contained in the $h/2$-neighborhood of the curve $\partial R$ it follows that $|\mathrm{Area}(R_Q)-\mathrm{Area}(R)|<4hr$.
Second, by sine theorem for the triangle with the vertices $z'$,$\frac{z_1+z_3}{2}$, $\frac{z_2+z_4}{2}$ and condition~(D) from Section~\ref{ssec-state} it follows that
$$
\left|z'-\frac{z_2+z_4}{2}\right|\le 
\csc\angle z'\cdot\left|\frac{z_1+z_3}{2}-\frac{z_2+z_4}{2}\right|=
\csc\angle (z_1z_3,z_2z_4)\cdot\left|\frac{z_1-z_2}{2}+\frac{z_3-z_4}{2}\right|\le
\mathrm{Const}\cdot eh,
$$
thus $|\mathrm{Area}(z'z_2z_4)|\le  \mathrm{Const}\cdot eh^2$.  
(If the triangle in question is degenerate then the same estimate follows by a limiting argument.)
Summing up the above expressions for the laplacian over all the vertices $w\in R\cap B^0$, applying the obtained area estimates and Lemma~\ref{cl-last} we get
\begin{equation*}
\left
|
\sum_{w\in R\cap B^0}[\Delta_Q |z|^2](w)-
\int_R \Delta |z|^2 dxdy
\right
|
=
\left
|
4\sum_{z_1z_3\pitchfork\partial R}\mathrm{Area}(z'z_2z_4)+
4\mathrm{Area}(R_Q)-4\mathrm{Area}(R)
\right
|
\le
\mathrm{Const}\cdot e^2hr.
\end{equation*}
The second sum is over all the faces $z_1z_2z_3z_4$ of the graph $Q$ such that $z_1\in R\cap B^0$ and $z_3\not\in R$.
\end{proof}



\begin{proof}[Proof of Lemma~\ref{cl-approximation} for $g(z)=\Real z^2$] 
For a face $z_1z_2z_3z_4$ of the lattice $Q$ denote by
$z''$ the point such that 
$\mathrm{Re}((z_1-z_3)(z''-\frac{z_1+z_3}{2}))=
\mathrm{Re}((z_2-z_4)(z''-\frac{z_2+z_4}{2}))=0
$. 
Then $[\nabla_{\!Q}\mathrm{Re}(z-z'')^2](z_1z_2z_3z_4)=0$.
Analogously to the previous case of the lemma we get
$$
[\Delta_Q\Real z^2](w)
=\sum_{z_1z_2z_3z_4\,:\,z_1=w}
2\Imaginary\left((z''-w)(z_2-z_4)\right).
$$
Summing up this expressions over all $w\in R\cap B^0$ and canceling repeating terms we get
$$
\sum_{w\in R\cap B^0}[\Delta_Q \Real z^2](w)=
\sum_{z_1z_3\pitchfork\partial R}2\Imaginary\left(z''(z_2-z_4)\right)=
\sum_{z_1z_3\pitchfork\partial R} \Imaginary\left((2z''-z_2-z_4)(z_2-z_4)\right).
$$
Here the first sum is over all the vertices $w\in R\cap B^0$ and
the other sums are over the faces $z_1z_2z_3z_4$ of $Q$ such that $z_1\in B^0\cap R$ and $z_3\not\in R$. The last equality holds because the diagonals $\overrightarrow{z_2z_4}$ of these faces form
the oriented boundary of a finite union of triangles.

Analogously to the previous case of the lemma we get $|z''-\frac{z_2+z_4}{2}|\le \mathrm{Const}\cdot eh$. Thus by Lemma~\ref{cl-last}
$$
\left
|
\sum_{w\in R\cap B^0}\left[\Delta_Q \Real z^2\right](w)
- \int_R \Delta \Real z^2\, dxdy
\right
|
\le \sum_{z_1z_3\pitchfork\partial R} |2z''-z_2-z_4|\cdot|z_2-z_4|\le 
\mathrm{Const}\cdot e^2hr.\\[-0.7cm]
$$
\end{proof}

\begin{proof}[Proof of Lemma~\ref{cl-approximation} for $g(z)=\Imaginary z^2$] This is analogous to the previous case.
\end{proof}

\begin{proof}[Proof of Lemma~\ref{cl-approximation}
in the case when $D^k g=0$ at the center of $R$ for $k=0,1,2$]
Using the estimate $|\Delta g(z)|\le \mathrm{Const}\cdot r\max_{z\in R}|D^3 g(z)|$ we get
$$
\left|\int_R \Delta g \,dxdy\right|\le \mathrm{Const}\cdot r^3\max_{z\in R}|D^3 g(z)|.
$$
Now applying
Lemma~\ref{cl-laplacian-form}, canceling repeating terms, applying Lemma~\ref{cl-last}, Gradient Approximation Lemma~\ref{cl-maximal-size}, the estimates $|\nabla g(z)|\le \mathrm{Const}\cdot r^2\max_{z\in R}|D^3 g(z)|$, and $|D^2 g(z)|\le \mathrm{Const}\cdot r\max_{z\in R}|D^3 g(z)|$
we get
\begin{align*}
\left|
\sum_{w\in R\cap B^0}[\Delta_Q g](w)
\right|
&=
\left|
\sum_{w\in R\cap B^0}\quad
\sum_{z_1z_2z_3z_4\,:\,z_1=w} *\nabla_{\!Q} g\cdot\overrightarrow{z_4z_2}
\right|\\
&=
\left|
\sum_{z_1z_3\pitchfork\partial R}*\nabla_{\!Q} g\cdot\overrightarrow{z_4z_2}
\right|\\
&\le
\mathrm{Const}\cdot er/h\cdot (r^2+ehr)\max_{z\in R}|D^3 g(z)|\cdot h\\
&\le
\mathrm{Const}\cdot e^2r^3\max_{z\in R}|D^3 g(z)|.\\[-1.6cm]
\end{align*}
\end{proof}

\begin{proof}[Proof of Lemma~\ref{cl-approximation} in the general case]
Assume without loss of generality that the center of the square $R$ is the origin. By the Taylor formula
$$
g(z)=
a_0+a_1\Real z+a_2\Imaginary z +a_3\Real z^2+a_4 |z|^2+a_5\Imaginary z^2+
\tilde g(z),
$$
for some $a_0,\dots,a_5\in\mathbb{R}$ and
a function $\tilde g(z)$
such that $D^k\tilde g(0)=0$ for $k=0,1,2$.
For each of the summands the lemma has already been proved. Thus the general case follows.
\end{proof}


\begin{remark}\label{rem-laplace-approx} A similar argument proves the estimate 
$$\left|
\left[\Delta_Q (g\left|_{Q^0}\right.)\right](z)
\right|\le
\mathrm{Const}_e\left(h^2\max_{w:|w-z|\le h}|D^2 g(w)|+h^3\max_{w:|w-z|\le h}|D^3 g(w)|\right).
$$ 
\end{remark}

\subsection{Uniform limit}\label{ssec-uniform}

The following lemma is the last result we need before the proof of Convergence Theorem~\ref{th-uniform-approx}. 





\begin{limit-harmonicity-lemma} \label{cl-harmonic-limit}
Let $\{Q_n\}$ be a nondegenerate uniform sequence of orthogonal lattices approximating a domain $\Omega$.
Let $u_n\colon Q^0_n\to\mathbb{R}$ be a sequence of discrete harmonic functions.
Suppose that the restrictions $u_n\left|_{B^0_n}\right.$
uniformly converge to a continuous function $u\colon \Omega\to\mathbb{R}$;  
 then the function $u$ is harmonic.
\end{limit-harmonicity-lemma}

\begin{proof}[Proof of Lemma~\ref{cl-harmonic-limit}]
Take an arbitrary smooth function $v\colon \Omega\to\mathbb{R}$ vanishing outside a compact subset $K\subset\Omega$ with smooth boundary.
By the Weyl lemma it suffices to prove that
$\int_{\Omega}u\Delta v \, dxdy=0$. 

Let us estimate the difference between $\int_{\Omega}u\Delta v\, dxdy$ and its discrete counterpart. For each $n$ take an auxiliary infinite square lattice with edge length $r:=\sqrt{h}$. For a face $R$ of the $n$-th auxiliary lattice denote $\tilde u_n(R):=\max_{z\in R\cap K}u(z)$.
Then $\tilde u_n\rightrightarrows u$ on the compact set $K$ because $u$ is continuous. 
Applying the convergence $u_n,\tilde u_n\rightrightarrows u$, the boundness of $\Delta v$, Remark~\ref{rem-laplace-approx}, condition~(U) from Section~\ref{ssec-state},
and then Laplacian Approximation Lemma~\ref{cl-approximation} we get
\begin{multline*}
\left|\int_\Omega u\Delta v \, dxdy - \sum_{z\in B_n^0} [u_n\Delta_{Q_n} v](z)\right|
\preceq
\sum_{R\,:\,R\cap K\ne\emptyset} |\tilde u_n(R)|\cdot
\left|
\int_R \Delta v\, dxdy -
\sum_{z\in R\cap B^0_n}[\Delta_{Q_n} v](z)
\right|\preceq\\
\mathrm{Area}(K)/r^2\cdot \max_K|u|\cdot \mathrm{Const}_e\cdot (rh+r^3)\max_K|D^2v,D^3v|\le \mathrm{Const}_{e,K,u,v}\cdot\sqrt{h}\to 0
\quad\text{ as }\quad n\to\infty.
\end{multline*}

It remains to estimate the discrete counterpart of $\int_{\Omega}u\Delta v \, dxdy$. Take $n$ large enough so that $K$ is inside $\partial Q_n$. Applying Green's Identity~\ref{cl-green} 
and the assumptions that $u_n$ is discrete harmonic
and $v$ vanishes outside $K$, we get
$$
\sum_{z\in B_n^0} [u_n\Delta_{Q_n} v](z)=\sum_{z\in B_n^0} [v \Delta_{Q_n} u_n](z)=0.
$$
Thus $\int_{\Omega}u\Delta v \, dxdy=0$, which proves the lemma.
\end{proof}

Lemma~\ref{cl-harmonic-limit} and its proof remain true, if condition (D) from Section~\ref{ssec-state} is relaxed to condition (D') from Remark~\ref{rem-relaxed}.

\subsection{Convergence Theorem}\label{ssec-convergence}

\begin{proof}[Proof of Convergence Theorem~\ref{th-uniform-approx}]
Take an arbitrary subsequence $Q_{n_k}$ of the given sequence of lattices $Q_n$. For brevity denote $Q_k:=Q_{n_k}$, $B_k:=B_{n_k}$. 

Let us estimate $|u_{Q_k,g}|$.
Since the sequence $Q_k$ approximates the bounded domain $\Omega$ it follows that there is a disk $\Omega'$ containing all lattices $Q_k$.
By Maximum Principle~\ref{cl-maximum-principle} we have
$$\max_{z\in B_k^0}|u_{Q_k,g}(z)|=
\max_{w\in B_k^0\cap \partial Q_k}|u_{Q_k,g}(w)|\le \max_{w\in \Closure\Omega'}|g(w)|<\infty$$
because $u_{Q_k,g}=g$ at $\partial Q_k$ and $g\colon\mathbb{C}\to \mathbb{R}$ is continuous. So the sequence $u_{Q_k,g}$ is uniformly bounded.

Let us estimate the right-hand side of inequality~\eqref{eq-l-equicontinuity} from Equicontinuity Lemma~\ref{l-equicontinuity} for $u:=u_{Q_k,g}$, $r:=3\mathrm{Diam}(\Omega')^{1/2}|z-w|^{1/2}$, and $|z-w|\ge h$.
By Variational Principle~\ref{cl-laplacian}, Energy Convergence Lemma~\ref{l-energy-convergence} and Remark~\ref{rem-rough-boundary} we have
$E_{Q_k}(u_{Q_k,g})\le E_{Q_k}(g\left|_{Q^0_k}\right.)\preceq E_{\mathrm{Cl}\Omega}(g)<\infty$ for $k\to\infty$. Thus the sequence $E_{Q_k}(u_{Q_k,g})$ is bounded. 
The second summand in~\eqref{eq-l-equicontinuity} is bounded by 
$2r\cdot\max_{z\in\Omega'}|D^1g(z)|$.
Thus by Equicontinuity Lemma~\ref{l-equicontinuity} there is a constant $\mathrm{Const}_{\{Q_n\},\Omega,g}$ 
such that
$$|u_{Q_k,g}(z)-u_{Q_k,g}(w)|\le\mathrm{Const}_{\{Q_n\},\Omega,g}\left(\ln^{-1/2}\frac{\mathrm{Diam}(\Omega')}{|z-w|}+|z-w|^{1/2}\right)\to 0\text{ as }|z-w|\to 0.$$
For $|z-w|< h$ the same estimate holds with $|z-w|$ replaced by $h$.
Thus still $|u_{Q_k,g}(z)-u_{Q_k,g}(w)|\to 0$ as $|z-w|\to 0$
because
 there are only finitely many lattices $Q_k$ with maximal edge length greater than a given number. 


We have proved that $\left\{u_{Q_k,g}\left|_{B_k^0}\right.\right\}$ is \emph{equicontinuous},
i.~e., there is a positive function $\delta(\epsilon)$ not depending on $k$ such that for each $z,w\in B^0_k$ with $|z-w|<\delta(\epsilon)$ we have $|u_{Q_k,g}(z)-u_{Q_k,g}(w)|<\epsilon$. 
By the Arzel\`a-Ascoli theorem it follows that 
a subsequence of the sequence $u_{Q_k,g}\left|_{B_k^0}\right.$
 uniformly converges to a function $u\colon \mathrm{Cl}\Omega \to \mathbb{R}$ 
continuous in the \emph{closure} of the domain $\Omega$.



Each boundary point $z\in\partial\Omega$ is a limit of a sequence $z_k\in\partial Q_k\cap B_k^0$, hence 
$u(z)=\lim u_{Q_k,g}(z_k)=\lim g(z_k)=g(z)$, where
the functions $u_{Q_k,g}$ are taken from our converging subsequence. By Lemma~\ref{cl-harmonic-limit} the function $u$ is harmonic in $\Omega$. Thus the function $u$ is the solution $u_{\Omega,g}$ of the Dirichlet problem on $\Omega$.
Since the solution $u_{\Omega,g}$ is unique it follows that the initial sequence $u_{Q_n,g}\colon B^0_n\to\mathbb{R}$ uniformly converges to $u_{\Omega,g}$. Analogously, $u_{Q_n,g}\colon W^0_n\to\mathbb{R}$ uniformly converges to $u_{\Omega,g}$. This completes the proof of main results.
\end{proof}


\begin{remark}\label{rem-relaxed-convergence}
Convergence Theorem~\ref{th-uniform-approx} and its proof 
remain true, if condition~(D) from Section~\ref{ssec-state} is relaxed to condition~(D') from Remark~\ref{rem-relaxed}, at the cost of providing the uniform convergence only at the vertices of the graphs~$B_n$ but not $W_n$.
\end{remark}

\begin{remark} Convergence Theorem~\ref{th-uniform-approx} and its proof 
remain true, if instead of orthogonality of the lattices we assume that the oriented angle between the diagonals $z_1z_3\subset B$ and $z_2z_4\subset W$ is the same for all faces $z_1z_2z_3z_4$ of each lattice. Indeed, the orthogonality was used only in the proof of Maximum Principle~\ref{cl-maximum-principle} and Green's Ideentity~\ref{cl-green}, and it is easy to check that these assertions remain true for the more general class of lattices in question (provided that $v(\partial Q)=0$ in Green's Identity~\ref{cl-green}). 
\end{remark}


\subsection{The Friedrichs inequality} \label{ssec-boundary-condition}


Let us conclude this section by one more energy estimate not used in the proof of main results. 
For a subset $K\subset \mathbb{C}$ and a function $u\colon B^0\to \mathbb{R}$ denote $L^{\!2}_K(u):=\sum_{z\in K\cap B^0} u^2(z)$. 


\begin{friedrichs-lemma}\label{cl-strip-estimate}
Let $\Omega$ be a bounded simply-connected domain with smooth boundary,
$Q$ be a quadrilateral lattice, 
$K$ be a compact set inside $\partial Q$, and $u\colon B^0\to \mathbb{R}$ be an arbitrary function.
Denote $r:=\max_{z\in K}\mathrm{Dist}(z,\partial \Omega)$.
Assume that $r>h$ and
$r>\max_{z\in \partial Q}\mathrm{Dist}(z,\partial \Omega)$. Then there is a constant $\mathrm{Const}_{\Omega,e}$ depending only on $\Omega$ and $e$ (but not on $Q$, $K$, $r$, $h$, $u$) such that
$$
h^2L^{\!2}_{\Omega-K}(u)\le
\mathrm{Const}_{\Omega,e} \left(hrL^{\!2}_{\partial Q}(u)+r^2 E_Q(u)\right).
$$
\end{friedrichs-lemma}



For the proof of Friedrichs Inequality Lemma~\ref{cl-strip-estimate} we need some notation and auxiliary lemmas. Let $abcd$ be a rectangle with vertices $a,b,c,d\in\mathbb{C}$ listed clockwise such that $ab$ is outside $\partial Q$ and $|b-c|>h$; see Figure~\ref{fig-strip}. Denote $r:=|b-c|$. For a point $z\in abcd$ denote by $R_z$ the rectangle $(r+2h)\times 2h$ centered at $(z'+w')/2$ with the side $2h$ parallel to $ab$, where
$z'\in cd$ and $w'\in ab$ are the points such that $z\in z'w'$ and $z'w'\parallel bc$.

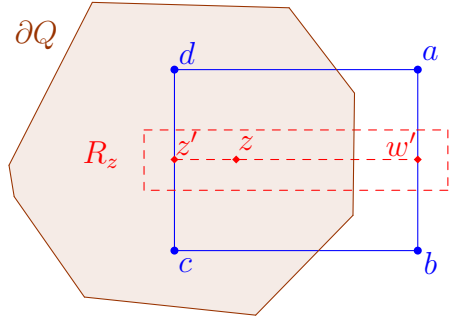
\begin{figure}[htb]
\begin{center}
\definecolor{zzttqq}{rgb}{0.6,0.2,0}
\definecolor{ffqqqq}{rgb}{1,0,0}
\definecolor{qqqqff}{rgb}{0,0,1}
\begin{tikzpicture}[line cap=round,line join=round,>=triangle 45,x=0.4cm,y=0.4cm]
\clip(0.23,-6.8) rectangle (15.26,5.5);
\fill[color=zzttqq,fill=zzttqq,fill opacity=0.1] (11.92,1.22) -- (11.87,-2.83) -- (8.67,-6.14) -- (3.04,-5.54) -- (0.73,-2.21) -- (0.56,-1.17) -- (3.3,4.23) -- (9.77,4.05) -- cycle;
\draw [color=qqqqff] (14,2)-- (14,-4);
\draw [color=qqqqff] (14,-4)-- (6,-4);
\draw [color=qqqqff] (6,-4)-- (6,2);
\draw [color=qqqqff] (6,2)-- (14,2);
\draw [dash pattern=on 3pt off 3pt,color=ffqqqq] (6,-0.98)-- (14,-0.98);
\draw [dash pattern=on 3pt off 3pt,color=ffqqqq] (5,0)-- (14.99,0);
\draw [dash pattern=on 3pt off 3pt,color=ffqqqq] (14.99,0)-- (14.99,-2);
\draw [dash pattern=on 3pt off 3pt,color=ffqqqq] (14.99,-2)-- (5,-2);
\draw [dash pattern=on 3pt off 3pt,color=ffqqqq] (5,-2)-- (5,0);
\draw [color=zzttqq] (11.92,1.22)-- (11.87,-2.83);
\draw [color=zzttqq] (11.87,-2.83)-- (8.67,-6.14);
\draw [color=zzttqq] (8.67,-6.14)-- (3.04,-5.54);
\draw [color=zzttqq] (3.04,-5.54)-- (0.73,-2.21);
\draw [color=zzttqq] (0.73,-2.21)-- (0.56,-1.17);
\draw [color=zzttqq] (0.56,-1.17)-- (3.3,4.23);
\draw [color=zzttqq] (3.3,4.23)-- (9.77,4.05);
\draw [color=zzttqq] (9.77,4.05)-- (11.92,1.22);
\fill [color=qqqqff] (14,2) circle (1.5pt);
\draw[color=qqqqff] (14.42,2.56) node {$a$};
\fill [color=qqqqff] (14,-4) circle (1.5pt);
\draw[color=qqqqff] (14.42,-4.4) node {$b$};
\fill [color=qqqqff] (6,-4) circle (1.5pt);
\draw[color=qqqqff] (6.35,-4.49) node {$c$};
\fill [color=qqqqff] (6,2) circle (1.5pt);
\draw[color=qqqqff] (6.41,2.56) node {$d$};
\fill [color=ffqqqq] (6,-0.98) ++(-1.5pt,0 pt) -- ++(1.5pt,1.5pt)--++(1.5pt,-1.5pt)--++(-1.5pt,-1.5pt)--++(-1.5pt,1.5pt);
\draw[color=ffqqqq] (6.41,-0.44) node {$z'$};
\fill [color=ffqqqq] (14,-0.98) ++(-1.5pt,0 pt) -- ++(1.5pt,1.5pt)--++(1.5pt,-1.5pt)--++(-1.5pt,-1.5pt)--++(-1.5pt,1.5pt);
\draw[color=ffqqqq] (13.46,-0.47) node {$w'$};
\fill [color=ffqqqq] (8.03,-0.98) ++(-1.5pt,0 pt) -- ++(1.5pt,1.5pt)--++(1.5pt,-1.5pt)--++(-1.5pt,-1.5pt)--++(-1.5pt,1.5pt);
\draw[color=ffqqqq] (8.36,-0.44) node {$z$};
\draw[color=ffqqqq] (3.59,-0.83) node {$R_z$};
\draw[color=zzttqq] (1.46,3.16) node {$\partial Q$};
\end{tikzpicture}
\end{center}
\caption{The rectangle $abcd$ and the neighborhood $R_z$.}
\label{fig-strip}
\end{figure}

\begin{lemma}\label{cl-reach-boundary}
A vertex $z\in B^0\cap abcd$ can be joined with $\partial Q$ by a path in the graph $R_z\cap B$.
\end{lemma}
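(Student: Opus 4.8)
The plan is to exploit that the central segment of $R_z$ runs from the interior of the region bounded by $\partial Q$ out across $\partial Q$, and to convert a crossing of this segment with the tiling of that region by faces of $B$ into a genuine path along edges of $B$. Write $S$ for the segment $z'w'$ and let $S'\subset S$ be its portion from $z$ to the first point $p$ at which $S$ meets $\partial Q$. Such a $p$ exists: $z\in B^0$ is a vertex of $Q$, hence lies in the closed region bounded by $\partial Q$, whereas the endpoint $w'$ lies on the side $ab$, which is outside $\partial Q$ by hypothesis. If $z\in\partial Q$ the path is trivial, so I assume $z$ is an interior vertex.

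First I would record the key containment. By Diameter Lemma~\ref{cl-diam} every bounded face of $B$ has diameter at most $h$, so any such face meeting $S'$ is contained in the closed $h$-neighborhood of the segment $S$. Since $R_z$ is the rectangle $(r+2h)\times 2h$ obtained by fattening $S$ by $h$ in every direction, that $h$-neighborhood lies inside $R_z$; hence every bounded face of $B$ crossed by $S'$ is contained in $R_z$, and so is every $B$-edge on the boundary of such a face.

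Next I would construct the path. After a harmless perturbation putting $S'$ in general position with respect to $B$, the segment starts at $z$ and, for an initial stretch, passes through a sequence of bounded faces $\phi_0,\phi_1,\dots,\phi_k$ of $B$, consecutive ones sharing the $B$-edge across which $S'$ passes, until it first leaves the bounded faces of $B$ across a $B$-edge $e$. Since $z$ is interior it is surrounded by bounded faces of $B$ and lies on $\partial\phi_0$, so $\partial\phi_0\cup\dots\cup\partial\phi_k$ is a connected subgraph of $R_z\cap B$ containing $z$. Because a bounded face of $B$ is bounded by diagonals of $Q$ lying in the interior of the region enclosed by $\partial Q$, the segment cannot meet $\partial Q$ while inside such a face; hence the edge $e$ necessarily bounds the \emph{outer} face of $B$.

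The main obstacle, and the step deserving the most care, is the boundary bookkeeping: identifying which vertices of $B$ lie on the outer face. Here one checks that an interior black vertex is completely encircled by bounded faces of $B$ (the incident diagonals fan around it with a bounded face between each consecutive pair), whereas a black vertex on $\partial Q$ leaves a gap adjacent to the outer face. Thus the vertices of $B$ incident to the outer face of $B$ are exactly the vertices of $\partial Q\cap B^0$, and in particular the endpoints of the edge $e$ above lie on $\partial Q$. Since $e\subset\partial\phi_k\subset R_z$, appending $e$ to the walk $\partial\phi_0\cup\dots\cup\partial\phi_k$ yields a path in $R_z\cap B$ joining $z$ to a vertex of $\partial Q\cap B^0$, as required. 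For nonconvex faces the diagonals are the $2$-segment broken edges of $B$, but these still have diameter at most $h$, so the same argument applies verbatim.
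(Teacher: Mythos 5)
Your construction runs correctly up to the exit edge $e$, but the ``boundary bookkeeping'' step on which everything hinges is false. The faces of $B$ are indexed by the white vertices: the face of $B$ containing a white vertex $z_2$ is bounded if and only if $z_2\notin\partial Q$, while every \emph{boundary} white vertex, together with the whole fan of quadrilateral-pieces around it, lies in the outer face of $B$ (a small disk around a boundary white vertex misses $B$ and contains points outside $\partial Q$). Consequently an edge $z_1z_3$ of $B$ borders the outer face exactly when its quadrilateral $z_1z_2z_3z_4$ has a white vertex on $\partial Q$ --- which says nothing about the black endpoints $z_1,z_3$. In particular, an \emph{interior} black vertex one of whose $Q$-neighbors is a boundary white vertex is incident to the outer face: already in the square lattice $[0,4]^2$ the interior vertex $(1,1)$ is incident to the outer face of $B$ via its boundary neighbor $(0,1)$. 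So your claim that the vertices of $B$ incident to the outer face are exactly $\partial Q\cap B^0$ is wrong, and with it the conclusion that the endpoints of $e$ lie on $\partial Q$. The failure is not repaired by the convexity remark: if the face $z_1z_2z_3z_4$ carrying $e=z_1z_3$ meets $\partial Q$ only at its white vertex $z_2$ (which the paper's hypotheses allow --- two faces may share a single vertex), then both $z_1$ and $z_3$ are interior and your path stops at interior vertices, short of $\partial Q$.

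The natural repair --- continue from an endpoint of $e$ along the boundary walk of the outer face, around the fan of faces at $z_2$, until you reach a black boundary neighbor of $z_2$ --- does reach $\partial Q\cap B^0$, but it costs distance: the fan lies within $h$ of $z_2$, while $z_2$ itself may already be at distance $h$ from the segment $z'w'$, so the completed path is only guaranteed to stay in roughly the $2h$-neighborhood of $z'w'$, not in $R_z$ (the $h$-neighborhood). That proves a weakened lemma with an enlarged rectangle, which would still suffice for the Friedrichs inequality after adjusting constants, but it is not the statement as given. Note the paper avoids this bookkeeping entirely by arguing by contradiction: if no path in $R_z\cap B$ joins $z$ to $\partial Q$, then $z$ is cut off by an arc $P\subset R_z-B$ whose endpoints lie on $\partial R_z$ and which crosses $zw'$, forcing a face of $B$ to have diameter exceeding $h$ and contradicting Diameter Lemma~\ref{cl-diam}; you may want to compare your face-by-face walk with that separation argument.
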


\begin{proof}
Assume that there is no path as required. Then the point $z$ is separated from $\partial Q$ by a path $P$ in the complement $R_z-B$. Since $ab$ is outside $\partial Q$ it follows that $z$ is separated from the point $w'$ by the path $P$ as well. The path $P$ is not closed because the graph $B$ is connected. Thus the endpoints of the path $P$ belong to $\partial R_z$. The path $P$ intersects the segment $zw'$ because $P$ separates $z$ from $w'$. Thus the face of the graph $B$ containing $P$ has  diameter greater than $h$, a contradiction to Diameter Lemma~\ref{cl-diam}.
\end{proof}




\begin{lemma}\label{cl-sym-rectangles} We have $w\in R_z$ if and only if $z\in R_w$.
\end{lemma}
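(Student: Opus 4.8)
The plan is to choose Cartesian coordinates adapted to the rectangle $abcd$ and to observe that $R_z$ and $R_w$ differ only by a vertical shift, so that the containment condition becomes manifestly symmetric. First I would fix coordinates so that the side $ab$ is parallel to the $y$-axis and the side $bc$ is parallel to the $x$-axis; then $abcd$ is an axis-parallel rectangle whose horizontal extent has width $r=|b-c|$. For a point $z\in abcd$ the segment $z'w'$ is horizontal, being parallel to $bc$, so $z$, $z'$, $w'$ and their midpoint $m_z:=(z'+w')/2$ all share the same $y$-coordinate, which I denote $y_z$. Moreover $z'$ always lies on $cd$ and $w'$ always lies on $ab$ \emph{independently of the choice of} $z$, so the $x$-coordinate of $m_z$ equals a fixed number $x_0$, namely the average of the $x$-coordinates of the two sides $cd$ and $ab$. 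Hence $m_z=(x_0,y_z)$.

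Next I would read off $R_z$ explicitly. Its long side $(r+2h)$ is parallel to $bc$ (horizontal) while its short side $(2h)$ is parallel to $ab$ (vertical), and it is centered at $m_z$; therefore
\[
R_z=\left[x_0-\tfrac{r+2h}{2},\,x_0+\tfrac{r+2h}{2}\right]\times\left[y_z-h,\,y_z+h\right].
\]
Writing $z=(x_z,y_z)$ and $w=(x_w,y_w)$, the condition $w\in R_z$ splits into a horizontal part $|x_w-x_0|\le (r+2h)/2$ and a vertical part $|y_w-y_z|\le h$; symmetrically, $z\in R_w$ splits into $|x_z-x_0|\le (r+2h)/2$ and $|y_z-y_w|\le h$. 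The vertical parts coincide, since $|y_w-y_z|\le h$ is literally the same condition as $|y_z-y_w|\le h$, so everything reduces to disposing of the horizontal parts.

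Here is the only point requiring an argument. Both rectangles $R_z$ and $R_w$ are defined, so by the standing hypothesis $z,w\in abcd$. The horizontal extent of $abcd$ is the interval of width $r$ centered at $x_0$, whence any point of $abcd$ satisfies $|x-x_0|\le r/2<(r+2h)/2$. In particular both horizontal conditions $|x_w-x_0|\le (r+2h)/2$ and $|x_z-x_0|\le (r+2h)/2$ hold automatically, and we conclude $w\in R_z\iff|y_w-y_z|\le h\iff z\in R_w$, which is the claim.

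I expect no genuine obstacle: once coordinates are fixed this is elementary plane geometry. The one subtlety to flag, and the step I would write out most carefully, is that the long and short directions of $R_z$ play asymmetric roles, so the equivalence is not automatic; it relies precisely on the extra room of width $2h$ that $R_z$ has beyond $abcd$ in the $bc$-direction, together with the hypothesis that both $z$ and $w$ lie in $abcd$.
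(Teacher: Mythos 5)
Your proof is correct, and it coincides with what the paper intends: the paper's own proof of this lemma is literally the single word ``Straightforward,'' so your coordinate computation simply writes out the omitted elementary argument. Your closing remark is also the right one to flag --- the equivalence genuinely uses that both $z$ and $w$ lie in $abcd$ (so that the horizontal conditions hold automatically), since for points with $|x-x_0|>(r+2h)/2$ the containment relation would not be symmetric.
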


\begin{proof}
Straightforward.
\end{proof}

\begin{lemma}\label{cl-rectangle-estimate}
$h^2 L^{\!2}_{abcd}(u)\le \mathrm{Const}\cdot \left(
e r h L^{\!2}_{\partial Q}(u)+e^4 r^2 E_Q(u)\right)$.
\end{lemma}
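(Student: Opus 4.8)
The plan is to bound $u(z)^2$ at each vertex $z\in B^0\cap abcd$ by comparing $u(z)$ with its value at a nearby boundary vertex and controlling the discrepancy by the energy of a short path. Concretely, for each $z\in B^0\cap abcd$ I would invoke Lemma~\ref{cl-reach-boundary} to obtain a path $P_z\subset R_z\cap B$ joining $z$ to a boundary vertex $z_\partial\in R_z\cap\partial Q\cap B^0$. Writing $u(z)^2\le 2u(z_\partial)^2+2\bigl(u(z)-u(z_\partial)\bigr)^2$ splits the estimate into a boundary part and an energy part. The number $m_z$ of edges of $P_z$ is at most the number of vertices of $R_z\cap B$; since $R_z$ is an $(r+2h)\times 2h$ rectangle and $r>h$, Rectangle Capacity Lemma~\ref{cl-points} gives $m_z\le\mathrm{Const}\cdot er/h$. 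Then Path Energy Lemma~\ref{cl-obvious}, rearranged, yields $\bigl(u(z)-u(z_\partial)\bigr)^2\le\mathrm{Const}\cdot m_z e^2 E_{P_z}(u)\le\mathrm{Const}\cdot\frac{e^3r}{h}E_{R_z\cap B}(u)$, using $P_z\subset R_z\cap B$.

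Next I would sum these bounds over all $z\in B^0\cap abcd$, the crux being to control how often each boundary vertex and each face is counted. For the boundary part, $u(z_\partial)^2\le\sum_{w\in R_z\cap\partial Q\cap B^0}u(w)^2$, so after interchanging the order of summation each $w\in\partial Q\cap B^0$ is counted with multiplicity $\#\{z\in B^0\cap abcd:w\in R_z\}$. Here Lemma~\ref{cl-sym-rectangles} is essential: $w\in R_z$ is equivalent to $z\in R_w$, so this multiplicity is at most $\#(R_w\cap B^0)\le\mathrm{Const}\cdot er/h$ by Rectangle Capacity Lemma~\ref{cl-points}. This gives $\sum_z u(z_\partial)^2\le\mathrm{Const}\cdot\frac{er}{h}L^{\!2}_{\partial Q}(u)$. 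For the energy part, a face with $B$-edge $z_1z_3$ contributes to $E_{R_z\cap B}(u)$ only when $z_1\in R_z$, i.e.\ (again by Lemma~\ref{cl-sym-rectangles}) when $z\in R_{z_1}$; hence its energy is counted at most $\#(R_{z_1}\cap B^0)\le\mathrm{Const}\cdot er/h$ times, whence $\sum_z E_{R_z\cap B}(u)\le\mathrm{Const}\cdot\frac{er}{h}E_Q(u)$.

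Assembling the two parts, $L^{\!2}_{abcd}(u)\le\mathrm{Const}\cdot\frac{er}{h}L^{\!2}_{\partial Q}(u)+\mathrm{Const}\cdot\frac{e^3r}{h}\cdot\frac{er}{h}E_Q(u)$, and multiplying through by $h^2$ produces exactly $h^2L^{\!2}_{abcd}(u)\le\mathrm{Const}\bigl(erhL^{\!2}_{\partial Q}(u)+e^4r^2E_Q(u)\bigr)$, as required. I expect the main obstacle to be the bookkeeping of multiplicities in the double sums: the naive estimate of $\sum_z E_{R_z\cap B}(u)$ would overcount badly, and it is precisely the symmetry $w\in R_z\Leftrightarrow z\in R_w$ of Lemma~\ref{cl-sym-rectangles}, combined with the uniform vertex count of Lemma~\ref{cl-points}, that keeps every multiplicity bounded by $\mathrm{Const}\cdot er/h$. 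A minor point to verify along the way is that the endpoint $z_\partial$ supplied by Lemma~\ref{cl-reach-boundary} indeed lies in $R_z\cap\partial Q\cap B^0$, which holds because $P_z$ is a path in $R_z\cap B$ terminating on $\partial Q$.
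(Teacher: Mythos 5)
Your proposal is correct and follows essentially the same route as the paper's own proof: split $u(z)^2\le 2u(w)^2+2\left(u(z)-u(w)\right)^2$ along a path to the boundary furnished by Lemma~\ref{cl-reach-boundary}, bound the path length via Rectangle Capacity Lemma~\ref{cl-points}, apply Path Energy Lemma~\ref{cl-obvious}, and control the multiplicities in the double sums by the symmetry $w\in R_z\Leftrightarrow z\in R_w$ of Lemma~\ref{cl-sym-rectangles}. The only cosmetic difference is that the paper explicitly takes a path with the \emph{minimal} number of vertices, which is what justifies your implicit step that the number of edges $m_z$ is bounded by the number of vertices of $R_z\cap B$ (the path must be simple).
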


\begin{proof} Take a vertex $z\in abcd\cap B^0$.
By Lemma~\ref{cl-reach-boundary} it follows that the vertex $z$ is joined with a vertex $w\in\partial Q$ by a path 
in the graph $R_{z}\cap B$. Take such path with minimal number $m$ of vertices. Then $m$ is not greater than the total number of vertices of the graph $B$ in the rectangle $R_{z}$. Hence $m\le \mathrm{Const}\cdot e r/h$ by Rectangle Capacity Lemma~\ref{cl-points}.
Thus
by Path Energy Lemma~\ref{cl-obvious} we get
$$
u(z)^2\le 2u(w)^2+2(u(z)-u(w))^2\le
2L^{\!2}_{\partial Q\cap R_z}(u)+\mathrm{Const}\cdot e^3rE_{B\cap R_z}(u)/h.
$$
Sum these inequalities over all $z\in abcd\cap B^0$.
A vertex $w\in \partial Q$ can contribute to $L^{\!2}_{\partial Q\cap R_z}(u)$ only if $w\in R_{z}$. Thus by Lemma~\ref{cl-sym-rectangles} and Rectangle Capacity Lemma~\ref{cl-points} it 
contributes 
at most $\mathrm{Const}\cdot er/h$ times.
Similarly, an edge $z''w''$ of the graph $B$ can contribute to $E_{B\cap R_z}(u)$ only if
$z''\in R_{z}$. Thus by Lemma~\ref{cl-sym-rectangles} and Rectangle Capacity Lemma~\ref{cl-points} 
it contributes at most $\mathrm{Const}\cdot er/h$ times.
Thus our summation leads to the required inequality.
\end{proof}






\begin{proof}[Proof of Friedrichs Inequality Lemma~\ref{cl-strip-estimate}] Since $\partial\Omega$ is a smooth curve it follows that its $r$-neighbor\-hood  can be covered by finitely many rectangles $abcd$ such that $ab$ is outside the $r$-neighborhood of $\Omega$ and $|b-c|=3r$. Moreover, the number of rectangles is bounded by a number depending only on the domain $\Omega$ (but not on $K$). These rectangles cover the strip $\Omega-K$ as well. Since $r>\max_{z\in\partial Q}\mathrm{Dist}(z,\partial \Omega)$ and $r>h$ it follows that for each rectangle the side $ab$ is outside $\partial Q$ and $|b-c|>h$. Summing the inequalities of Lemma~\ref{cl-rectangle-estimate} for each of the rectangles, 
we get the required inequality.
\end{proof}

\begin{remark}
Our proof of 
Lemma~\ref{cl-strip-estimate}
does not generalize to domains with nonsmooth boundaries,
e.g., to a domain bounded by a cardioid.
\end{remark}

\begin{corollary} \label{l-boundary-values} 
Let $\Omega$ be a bounded simply-connected domain with smooth boundary. Let $g\colon \mathbb{C}\to\mathbb{R}$ be a smooth function. Let $\{Q_n\}$ be a nondegenerate uniform sequence of finite \emph{quadrilateral} lattices approximating the domain $\Omega$.
Suppose that  the sequence $u_{Q_n,g}\left|_{B_n^0}\right.$ converges to a harmonic function $u\colon \Omega\to \mathbb{R}$ uniformly on each compact subset of $\Omega$. Then $u=u_{\Omega,g}$.
\end{corollary}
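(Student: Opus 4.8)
The plan is to combine the uniform energy bound with Friedrichs Inequality Lemma~\ref{cl-strip-estimate} to control the discrepancy $u_{Q_n,g}-g$ in a thin layer along $\partial\Omega$, to pass to the limit there, and to conclude that the harmonic limit $u$ takes the boundary values $g$; then $u=u_{\Omega,g}$ by uniqueness for the continuous Dirichlet problem. Here the lattices are only quadrilateral, so Maximum Principle~\ref{cl-maximum-principle} and Green's Identity~\ref{cl-green} are unavailable and the scheme of Convergence Theorem~\ref{th-uniform-approx} does not apply; Friedrichs Inequality Lemma~\ref{cl-strip-estimate} is precisely the tool that survives for arbitrary quadrilateral lattices.

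First I would bound the energies uniformly. By Variational Principle~\ref{cl-laplacian} we have $E_{Q_n}(u_{Q_n,g})\le E_{Q_n}(g|_{Q_n^0})$, and by Energy Convergence Lemma~\ref{l-energy-convergence} the right-hand side tends to $E_\Omega(g)<\infty$, so both quantities are bounded. Since $E_{Q_n}$ is a nonnegative quadratic form, $E_{Q_n}^{1/2}$ is a seminorm, whence $v_n:=u_{Q_n,g}-g|_{Q_n^0}$ satisfies $\sup_n E_{Q_n}(v_n)<\infty$. The key point is that $v_n$ vanishes on $\partial Q_n$, because $u_{Q_n,g}=g$ there, so $L^2_{\partial Q_n}(v_n)=0$.

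Next I would fix a small $\rho>0$ and set $K_\rho:=\{z\in\Omega:\mathrm{Dist}(z,\partial\Omega)\ge\rho\}$ and $S_\rho:=\Omega-K_\rho$. For $n$ large, $K_\rho$ lies inside $\partial Q_n$, and both $\rho>h_n$ and $\rho>\max_{z\in\partial Q_n}\mathrm{Dist}(z,\partial\Omega)$ hold because $\{Q_n\}$ approximates $\Omega$. Applying Friedrichs Inequality Lemma~\ref{cl-strip-estimate} to $v_n$, the boundary term drops out and
\[
h_n^2 L^2_{S_\rho}(v_n)\le \mathrm{Const}_{\Omega,e}\,\rho^2 E_{Q_n}(v_n)\le \mathrm{Const}\cdot\rho^2,
\]
with the final constant independent of $\rho$ and $n$. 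To turn this into a continuous estimate I would use that the vertices of $B_n$ are $h_n$-dense (each face of $Q_n$ carries a vertex of $B$, and faces have diameter at most $h_n$), while condition~(U) bounds their density from above; hence on any compact annulus $A_{\varepsilon,\rho}:=\{\varepsilon\le\mathrm{Dist}(\cdot,\partial\Omega)\le\rho\}$ contained in $\Omega$ the quantity $h_n^2 L^2_{A_{\varepsilon,\rho}}(v_n)$ is comparable, up to a factor depending only on $e$, to $\int_{A_{\varepsilon,\rho}}(u-g)^2\,dxdy$ as $n\to\infty$ (using $u_{Q_n,g}\rightrightarrows u$ on $A_{\varepsilon,\rho}$). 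Passing to the limit and then letting $\varepsilon\to 0$ by monotone convergence gives
\[
\int_{S_\rho}(u-g)^2\,dxdy\le \mathrm{Const}\cdot\rho^2
\]
for all small $\rho$.

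Finally I would read off the boundary behaviour. The last bound shows $u-g\in L^2$ near $\partial\Omega$ and, via the mean value property on balls of radius equal to the distance to $\partial\Omega$, that $u$ is bounded in $\Omega$; thus $w:=u-u_{\Omega,g}$ is a bounded harmonic function. Dividing by $\rho$ and using $\mathrm{Area}(S_\rho)=O(\rho)$, the bound says that the mean of $(u-g)^2$ over the layer $S_\rho$ tends to $0$, i.e. $u$ attains the boundary values $g$ in the $L^2$ sense. Since $u_{\Omega,g}$ is continuous up to $\partial\Omega$ with the same boundary values, $w$ is a bounded harmonic function whose $L^2$ boundary trace vanishes, and the Poisson representation of bounded harmonic functions on a smooth domain forces $w\equiv 0$, that is $u=u_{\Omega,g}$. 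I expect the main obstacle to be exactly this transfer of boundary data: the hypothesis supplies convergence only on compact subsets, so the boundary values of $u$ are not directly visible, and one must extract them through Friedrichs Inequality Lemma~\ref{cl-strip-estimate}; the attendant comparison between the discrete sums $h_n^2 L^2$ and the integrals (needing the lower density coming from short edges as well as the upper density~(U)) is the other point requiring care.
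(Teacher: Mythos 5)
Your proposal follows essentially the same route as the paper's proof: bound the energies via Variational Principle~\ref{cl-laplacian} and Energy Convergence Lemma~\ref{l-energy-convergence}, apply Friedrichs Inequality Lemma~\ref{cl-strip-estimate} to $u_{Q_n,g}-g$ (whose boundary term vanishes since $u_{Q_n,g}=g$ on $\partial Q_n$), convert the discrete strip estimate into $\int_{S(r)}(u-g)^2\,dxdy\le\mathrm{Const}\cdot r^2$ using the $h$-density of vertices coming from Diameter Lemma~\ref{cl-diam}, and conclude from the resulting mean-square boundary condition by uniqueness. The only difference is cosmetic: at the last step the paper simply cites \cite[\S4.1]{Courant-friedrichs-Lewy-28} for the fact that $\frac{1}{r}\int_{S(r)}(u-g)^2\,dxdy\to 0$ forces $u$ to attain the boundary values $g$, whereas you re-derive this via boundedness of $u$ and the Poisson representation.
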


\begin{proof}[Proof of Corollary~\ref{l-boundary-values}]
Denote by $S(r)\subset\Omega$ the $r$-neighborhood of  $\partial\Omega$. First let us prove the estimate $h^2 L^{\!2}_{S(r)}(u_{Q_n,g}-g)\le \mathrm{Const}_{\{Q_m\},\Omega,g}r^2$ for $r>h$ and some constant $\mathrm{Const}_{\{Q_m\},\Omega,g}$. 
Apply Friedrichs Inequality Lemma~\ref{cl-strip-estimate} for the function $u_{Q_n,g}-g$. The energy $E_{Q_n}(u_{Q_n,g}-g)\le 2E_{Q_n}(u_{Q_n,g})+2E_{Q_n}(g\left|_{Q^0_n}\right.)$ is bounded by Variational Principle~\ref{cl-laplacian} and Energy Convergence Lemma~\ref{l-energy-convergence}. We have $L^{\!2}_{\partial Q_n}(u_{Q_n,g}-g)=0$ because $u_{Q_n,g}=g$ at~$\partial Q_n$, and the required estimate follows.

Now by Diameter Lemma~\ref{cl-diam} 
it follows that for each $\rho<r$
$$
\int_{S(r)-S(\rho)}(u-g)^2dxdy\preceq
h^2 L^{\!2}_{S(r)-S(\rho)}(u_{Q_n,g}-g)\le
h^2 L^{\!2}_{S(r)}(u_{Q_n,g}-g)\le
\mathrm{Const}_{\{Q_m\},\Omega,g}r^2.
$$
Approaching $\rho\to 0$ we get $\frac{1}{r}\int_{S(r)}(u-g)^2 dxdy\to 0$ as $r\to 0$. By \cite[\S4.1]{Courant-friedrichs-Lewy-28} this condition implies the boundary condition. Thus $u=u_{\Omega,g}$.
\end{proof}

\section{Applications and open problems}\label{sec-open-problems}

\subsection{Application to numerical analysis}\label{ssec-alg}
Convergence Theorem~\ref{th-uniform-approx} provides a new approximation algorithm for the numerical solution of the Dirichlet boundary value problem. It also gives a new convergence result for the classical finite element method, which we are going to state now.

Let $B$ be a triangulation of a polygon $\hat B$. The \emph{finite element method} approximates the solution of the Dirichlet problem on $\Omega$ by a continuous function $u_{B,g}\colon\hat B\to \mathbb{R}$ which is linear on each face of $B$, equal to the given function $g$ on the boundary $B^0\cap \partial\hat B$, and has minimal energy $E_{\hat B}(u)=\int_{\hat B}|\nabla u|^2dxdy$ (among such functions). The restriction $u_{B,g}\colon B^0\to \mathbb{R}$ is 
the \emph{solution of the Dirichlet problem on $B$}.

Equivalently \cite[\S4]{Duffin-59}, 
it can be defined as follows. For a nonboundary edge $z_1z_3\subset B$ denote by $\alpha$ and $\beta$ the angles opposite to the edge $z_1z_3$ in the two triangles of $B$ sharing the edge; see Figure~\ref{fig-cot}. Denote
\vspace{-0.3cm}
\begin{equation}\label{eq-cot}
c(z_1z_3):=(\cot \alpha + \cot \beta)/2.
\end{equation}
Then  $u_{B,g}\colon B^0\to \mathbb{R}$ is the unique function equal to $g$ on $B^0\cap\partial\hat{B}$ such that for each $z_1\in B^0-\partial\hat{B}$ we have $\sum_{z_3}c(z_1z_3)(u_{B,g}(z_1)-u_{B,g}(z_3))=0$,
where the sum is over all neighbors $z_3$ of $z_1$.

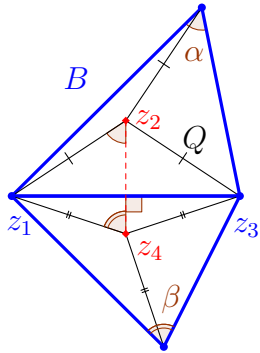
\begin{figure}[htbp]
\begin{center}
\definecolor{zzttqq}{rgb}{0.6,0.2,0}
\definecolor{ffqqqq}{rgb}{1,0,0}
\definecolor{qqqqff}{rgb}{0,0,1}
\begin{tikzpicture}[line cap=round,line join=round,>=triangle 45,x=0.5cm,y=0.5cm]
\clip(-3.2,-3.24) rectangle (3.5,6.1);
\draw[color=zzttqq,fill=zzttqq,fill opacity=0.1] (0,0.58) -- (0.42,0.58) -- (0.42,1) -- (0,1) -- cycle; 
\draw [shift={(0,0)},color=zzttqq,fill=zzttqq,fill opacity=0.1] (0,0) -- (90:0.6) arc (90:161.57:0.6) -- cycle;
\draw [shift={(1,-3)},color=zzttqq,fill=zzttqq,fill opacity=0.1] (0,0) -- (63.43:0.6) arc (63.43:135:0.6) -- cycle;
\draw [shift={(2,6)},color=zzttqq,fill=zzttqq,fill opacity=0.1] (0,0) -- (-135:0.6) arc (-135:-78.69:0.6) -- cycle;
\draw [shift={(0,3)},color=zzttqq,fill=zzttqq,fill opacity=0.1] (0,0) -- (-146.31:0.6) arc (-146.31:-90:0.6) -- cycle;
\draw [line width=1.2pt,color=qqqqff] (-3,1)-- (3,1);
\draw [line width=1.2pt,color=qqqqff] (3,1)-- (2,6);
\draw [line width=1.2pt,color=qqqqff] (2,6)-- (-3,1);
\draw [line width=1.2pt,color=qqqqff] (3,1)-- (1,-3);
\draw [line width=1.2pt,color=qqqqff] (1,-3)-- (-3,1);
\draw [line width=1.2pt,color=qqqqff] (-3,1)-- (3,1);
\draw (0,3)-- (2,6);
\draw (0.85,4.6) -- (1.15,4.4);
\draw (0,3)-- (3,1);
\draw (1.6,2.15) -- (1.4,1.85);
\draw (0,3)-- (-3,1);
\draw (-1.4,1.85) -- (-1.6,2.15);
\draw (3,1)-- (0,0);
\draw (1.56,0.43) -- (1.5,0.6);
\draw (1.5,0.4) -- (1.44,0.57);
\draw (0,0)-- (-3,1);
\draw (-1.5,0.4) -- (-1.44,0.57);
\draw (-1.56,0.43) -- (-1.5,0.6);
\draw (0,0)-- (1,-3);
\draw (0.57,-1.44) -- (0.4,-1.5);
\draw (0.6,-1.5) -- (0.43,-1.56);
\draw [dash pattern=on 2pt off 2pt,color=ffqqqq] (0,3)-- (0,0);
\draw [shift={(0,0)},color=zzttqq] (90:0.6) arc (90:161.57:0.6);
\draw [shift={(0,0)},color=zzttqq] (90:0.5) arc (90:161.57:0.5);
\draw [shift={(1,-3)},color=zzttqq] (63.43:0.6) arc (63.43:135:0.6);
\draw [shift={(1,-3)},color=zzttqq] (63.43:0.5) arc (63.43:135:0.5);
\fill [color=qqqqff] (-3,1) circle (1.5pt);
\draw[color=qqqqff] (-2.78,0.22) node {$z_1$};
\fill [color=qqqqff] (3,1) circle (1.5pt);
\draw[color=qqqqff] (3.18,0.16) node {$z_3$};
\fill [color=qqqqff] (2,6) circle (1.5pt);
\draw[color=qqqqff] (-1.3,4.12) node {$B$};
\fill [color=qqqqff] (1,-3) circle (1.5pt);
\fill [color=ffqqqq] (0,3) ++(-1.5pt,0 pt) -- ++(1.5pt,1.5pt)--++(1.5pt,-1.5pt)--++(-1.5pt,-1.5pt)--++(-1.5pt,1.5pt);
\draw[color=ffqqqq] (0.6,3.02) node {$z_2$};
\draw[color=black] (1.82,2.48) node {$Q$};
\fill [color=ffqqqq] (0,0) ++(-1.5pt,0 pt) -- ++(1.5pt,1.5pt)--++(1.5pt,-1.5pt)--++(-1.5pt,-1.5pt)--++(-1.5pt,1.5pt);
\draw[color=ffqqqq] (0.64,-0.44) node {$z_4$};
\draw[color=zzttqq] (1.2,-1.72) node {$\beta$};
\draw[color=zzttqq] (1.8,4.76) node {$\alpha$};
\end{tikzpicture}
\end{center}
\caption{A ``kite'' lattice $Q$ associated to a Delaunay triangulation $B$.}
\label{fig-cot}
\end{figure}

Usually one proves convergence of the finite element method under certain assumptions on individual triangles \cite{Brandts-etal-11}. For instance, it was proved in \cite[Theorem~3.3.7]{Ciarlet-78} 
that $u_{B_n,g}$ converges uniformly to $u_{\hat B,g}$, if there is a constant $\mathrm{Const}$ such that
\begin{enumerate}
\item[(A)] the minimal angle of each triangle is greater than $1/\mathrm{Const}$;
\item[(R)] the ratio of any two edges of each triangulation is less than $\mathrm{Const}$.
\end{enumerate}
According to~\cite{Brandts-etal-11} no uniform convergence results without assumptions (A) and (R) were available.

Following \cite{Bobenko-Springborn-07} we suggest a new approach measuring ``triangulation quality'' via configuration of neighboring triangles rather than the shape of individual ones. A triangulation $B$ is called \emph{Delaunay}, if $\alpha+\beta\le \pi$ for each pair of adjacent triangular faces (and thus $c(z_1z_3)\ge 0$ above).
A Delaunay triangulation exists for any prescribed set of vertices $B^0$ not contained in one line
~\cite{Bobenko-Springborn-07}. 
A triangulation $B$ has \emph{regular boundary}, if 
each triangular face has no more than one common side with the boundary, and the angle opposite to such common side is less than $\pi/2$. A sequence of triangulations is \emph{nondegenerate uniform}, if there is a constant $\mathrm{Const}$ such that for each member of the sequence
\begin{itemize}
\item[(D)] for each nonboundary edge the sum of opposite angles in the two triangles containing the edge is less than $\pi-1/\mathrm{Const}$ (in particular, the triangulation is Delaunay);
\item[(U)] the number of vertices in an arbitrary disk of radius equal to the maximal edge length is less than $\mathrm{Const}$.
\end{itemize}
Assumption (U) is weaker than (R); neither (D) nor (A) is weaker than the other one.
We prove convergence of the finite element method for triangulations satisfying (D) and (U). 

\begin{corollary}
\label{cor-cotangent}
\textup{}
Let $\Omega\subset\mathbb{C}$ be a bounded simply-connected domain. Let $g\colon \mathbb{C}\to\mathbb{R}$ be a smooth function.
Let $\{B_n\}$ be a nondegenerate uniform sequence of triangulations with regular boundaries approximating the domain $\Omega$. Then the solution $u_{B_n,g}\colon B_n^0\to\mathbb{R}$ of the Dirichlet problem on $B_n$ uniformly converges to the solution $u_{\Omega,g}\colon \mathrm{Cl}\Omega\to\mathbb{R}$
of the Dirichlet problem on $\Omega$.
\end{corollary}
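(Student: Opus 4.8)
The plan is to reduce the statement to Convergence Theorem~\ref{th-uniform-approx} by associating to each Delaunay triangulation $B_n$ an orthogonal quadrilateral lattice $Q_n$ whose discrete Dirichlet solution agrees with the finite element solution $u_{B_n,g}$ at the triangulation vertices. Concretely, I take $Q_n$ to be the \emph{kite lattice} of Figure~\ref{fig-cot}: its black vertices $B_n^0$ are the vertices of the triangulation, its white vertices are the circumcenters of the triangles, and for each interior edge $z_1z_3$ of $B_n$ shared by two triangles the corresponding face $z_1z_2z_3z_4$ of $Q_n$ has white vertices $z_2,z_4$ equal to the two circumcenters. Since each circumcenter lies on the perpendicular bisector of $z_1z_3$, the diagonals of every face are orthogonal, so $Q_n$ is an orthogonal lattice. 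A short computation with the inscribed-angle theorem gives $|z_2-z_4|=\tfrac12|z_1-z_3|(\cot\alpha+\cot\beta)$ with the correct orientation, so that $c(z_1z_3)=i\frac{z_2-z_4}{z_1-z_3}$ equals exactly the cotangent weight~\eqref{eq-cot}. Hence $\Delta_{Q_n}$ coincides on interior black vertices with the cotangent laplacian, and by Variational Principle~\ref{cl-laplacian} together with the uniqueness in Existence and Uniqueness Theorem~\ref{cl-dirichlet-problem}, the restriction $u_{Q_n,g}\left|_{B_n^0}\right.$ is precisely the finite element solution $u_{B_n,g}$ (the orthogonal energy splits as $E_B+E_W$, so the black subproblem decouples and is exactly the finite element problem).

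It then remains to verify that $\{Q_n\}$ is a nondegenerate uniform sequence of orthogonal lattices approximating $\Omega$, at least in the relaxed sense of Remarks~\ref{rem-relaxed} and~\ref{rem-relaxed-convergence}. Orthogonality already supplies the angle part of condition~(D'). For the ratio part I must bound $|z_1z_3|/|z_2z_4|=1/c(z_1z_3)$ from above, i.e.\ bound $c(z_1z_3)=(\cot\alpha+\cot\beta)/2$ away from zero; writing $\cot\alpha+\cot\beta=\sin(\alpha+\beta)/(\sin\alpha\sin\beta)$ and minimizing over the splitting of the fixed sum $\alpha+\beta$ shows the minimum is $\cot\tfrac{\alpha+\beta}{2}$, which is bounded below since condition~(D) gives $\alpha+\beta<\pi-1/\mathrm{Const}$. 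Note that $c$ need \emph{not} be bounded above, since a short edge opposite a tiny angle produces a large weight; this is precisely why the relaxed condition~(D') and the relaxed convergence of Remark~\ref{rem-relaxed-convergence} (uniform convergence only on $B_n^0$) are the right tools, and why they deliver exactly the conclusion requested.

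The genuinely delicate point, and the step I expect to be the main obstacle, is to control the \emph{maximal edge length} of $Q_n$: its edges join triangulation vertices to circumcenters and therefore have length equal to the circumradii, which a priori blow up for thin triangles. Here I would exploit that the \emph{largest} angle of every triangle is bounded away from $\pi$: for a longest edge that is interior this follows from condition~(D), while for a longest edge on the boundary it follows from the regular-boundary hypothesis (the opposite angle is $<\pi/2$). Consequently each circumradius $R=a_{\max}/(2\sin\alpha_{\max})$ is at most $\mathrm{Const}$ times the longest edge of its triangle, so the maximal edge length of $Q_n$ is comparable to that of $B_n$ and tends to $0$; combined with the hypothesis that $B_n$ approximates $\Omega$, this yields that $Q_n$ approximates $\Omega$. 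Condition~(U) for $Q_n$ is then inherited from condition~(U) for $B_n$: the black vertices are controlled directly, while the white vertices, being in bijection with the triangles and lying within a bounded multiple of the edge length of their vertices, are controlled through the Euler formula and Rectangle Capacity Lemma~\ref{cl-points}. With all hypotheses of the relaxed convergence verified, Remark~\ref{rem-relaxed-convergence} gives that $u_{Q_n,g}\left|_{B_n^0}\right.=u_{B_n,g}$ converges uniformly to $u_{\Omega,g}$, which is the desired conclusion.
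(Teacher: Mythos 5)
Your overall route is the same as the paper's: pass to the kite lattice $Q_n$ of Figure~\ref{fig-cot} whose white vertices are the circumcenters, identify $u_{Q_n,g}\left|_{B_n^0}\right.$ with the finite element solution via the cotangent weights, verify the relaxed condition~(D') of Remark~\ref{rem-relaxed}, and invoke Convergence Theorem~\ref{th-uniform-approx} together with Remark~\ref{rem-relaxed-convergence}. Your verifications of the ratio bound $c(z_1z_3)\ge\cot\frac{\alpha+\beta}{2}$, of the circumradius bound (condition~(D) for an interior longest edge, the regular-boundary hypothesis for a boundary one), and of condition~(U) are correct and are in fact carried out in more detail than in the paper, which merely asserts that $\{Q_n\}$ inherits nondegenerate uniformity from $\{B_n\}$.

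There is, however, a genuine gap at the very first step: you take for granted that the kite construction produces a quadrilateral lattice at all. By the paper's definition a quadrilateral lattice is a graph \emph{embedded} in $\mathbb{C}$ with rectilinear edges whose bounded faces are quadrilaterals, so you must show that the drawn segments (circumcenter to vertex, over all triangles) have no common interior points --- equivalently, that the kites $z_1z_2z_3z_4$ and the boundary triangles $z_1z_2z_3$ do not overlap. This is not automatic: circumcenters of obtuse Delaunay triangles lie outside their triangles and the kites may be nonconvex, so a priori segments coming from different triangles could cross; and without embeddedness none of the planar machinery behind Theorem~\ref{th-uniform-approx} (areas of faces, Maximum Principle~\ref{cl-maximum-principle}, Laplacian Approximation Lemma~\ref{cl-approximation}) is available. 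The paper singles this out as ``the nontrivial part'' of Lemma~\ref{l-transformation} and proves it by an area argument: the Delaunay property, condition~(D), and the regular-boundary hypothesis guarantee that every kite and every boundary triangle has positive oriented area; these areas sum exactly to $\mathrm{Area}(\widehat{B}_n)$, while their union contains $\widehat{B}_n$ (because the boundary of the union lies in $\partial\widehat{B}_n$); hence no two of them can overlap. Your proposal needs this argument (or an equivalent one) before the reduction to Theorem~\ref{th-uniform-approx} is legitimate; everything after that point stands.
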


For the proof we need the following lemma.

\begin{lemma}\label{l-transformation}
Let $B$ be a Delaunay triangulation with regular boundary satisfying condition (D) from Section~\ref{ssec-alg}. In each triangle of $B$, draw $3$ segments joining the circumcenter with the vertices. Then the drawn segments do not have common interior points and thus form an 
orthogonal quadrilateral lattice $Q$. For each face $z_1z_2z_3z_4$ of the lattice with the vertices listed clockwise we have 
$
i\frac{z_2-z_4}{z_1-z_3}=(\cot \alpha + \cot \beta)/2,
$
where $\alpha$ and $\beta$ are the angles opposite to the edge $z_1z_3$ in the two triangles of $B$ sharing the edge.
\end{lemma}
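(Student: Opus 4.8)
The plan is to establish the three assertions of the lemma in turn: orthogonality of the diagonals, disjointness of the interiors of the drawn segments (so that $Q$ is a genuine quadrilateral lattice), and finally the cotangent formula.

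First I would note that orthogonality is essentially automatic. Fix an interior edge $z_1z_3$ of $B$, shared by two triangles whose third vertices subtend the angles $\alpha,\beta$ at $z_1z_3$, and let $z_2,z_4$ denote the respective circumcenters. The circumcenter of a triangle is equidistant from its three vertices, hence lies on the perpendicular bisector of each side; thus both $z_2$ and $z_4$ lie on the perpendicular bisector of $z_1z_3$, the segment $z_2z_4$ is contained in this bisector, and so $z_2z_4\perp z_1z_3$. The quadrilateral $z_1z_2z_3z_4$ is therefore a kite symmetric about the line $z_2z_4$; in particular it is a simple (possibly non-convex) quadrilateral with orthogonal diagonals, and the assignment $z_1z_3\mapsto z_1z_2z_3z_4$ attaches one such kite to every interior edge of $B$.

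The most delicate step is showing that the drawn segments have pairwise disjoint interiors, so that these kites tile the polygon $\hat B$ without overlaps. Here I would invoke the duality between the Delaunay triangulation and the Voronoi diagram of the vertex set $B^0$. Since $B$ is Delaunay, the circumcircle of each triangle contains no other vertex, so the circumcenter of each triangle is a \emph{Voronoi vertex}: it is at distance $R$ from the three triangle vertices and at distance $\ge R$ from every other vertex. Each Voronoi cell $\mathrm{Vor}(v)$ is convex, contains its site $v$ in its interior (for interior vertices), and has the relevant circumcenters on its boundary; hence every drawn segment from a circumcenter $O$ to a vertex $v$ satisfies $(O,v]\subset\mathrm{Int}\,\mathrm{Vor}(v)$, with only the endpoint $O$ lying on $\partial\mathrm{Vor}(v)$. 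Two segments ending at distinct vertices $v\ne v'$ then lie, away from their endpoints, in the interiors of the disjoint cells $\mathrm{Vor}(v),\mathrm{Vor}(v')$ and cannot cross, while two segments to a common vertex $v$ issue from distinct boundary vertices of the convex cell $\mathrm{Vor}(v)$ and meet only at $v$. The boundary edges, where a circumcenter may fall outside $\hat B$, are handled by the \emph{regular boundary} hypothesis: the angle opposite a boundary side being less than $\pi/2$ keeps the corresponding circumcenter on the interior side, so the boundary kites close up correctly. I expect this non-crossing/tiling verification, and especially the bookkeeping near $\partial\hat B$, to be the main obstacle; the interior case is clean via convexity of Voronoi cells, but the boundary case requires care. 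Once non-crossing is established, the bounded faces are exactly these kites, one per interior edge of $B$, and $Q$ is a bona fide orthogonal quadrilateral lattice.

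Finally I would derive the cotangent identity by a direct inscribed-angle computation. Put $2\ell:=|z_1-z_3|$ and let $M$ be the midpoint of $z_1z_3$. By the law of sines the circumradius of the $\alpha$-triangle is $R=\ell/\sin\alpha$, so the signed distance from $z_2$ to $M$ along the bisector, measured toward the corresponding apex, equals $\sqrt{R^2-\ell^2}=\ell\cot\alpha$; likewise the signed distance from $z_4$ to $M$ equals $\ell\cot\beta$ on the opposite side. Hence $|z_2-z_4|=\ell(\cot\alpha+\cot\beta)$, and since $z_2-z_4\perp z_1-z_3$ the quotient $\frac{z_2-z_4}{z_1-z_3}$ is purely imaginary of modulus $\frac{\cot\alpha+\cot\beta}{2}$. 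Multiplying by $i$ yields a real number, and the clockwise listing of $z_1z_2z_3z_4$ together with the Delaunay inequality $\alpha+\beta\le\pi$ (which gives $\cot\alpha+\cot\beta\ge0$) fixes the sign, so that $i\frac{z_2-z_4}{z_1-z_3}=\frac{\cot\alpha+\cot\beta}{2}$. The obtuse case $\cot\alpha<0$, in which $z_2$ lies on the same side of $z_1z_3$ as $z_4$ and the kite becomes non-convex, is covered by the very same signed computation.
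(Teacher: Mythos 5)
Your orthogonality observation and the signed law-of-sines computation of $i\frac{z_2-z_4}{z_1-z_3}=(\cot\alpha+\cot\beta)/2$ are fine, and they match what the paper dismisses as a ``straightforward computation''. The genuine gap is in your non-crossing argument. You pass from ``$B$ is Delaunay'' to ``the circumcircle of each triangle contains no other vertex'', and from there to ``each circumcenter is a Voronoi vertex'' and ``$(O,v]\subset\mathrm{Int}\,\mathrm{Vor}(v)$''. But the paper's definition of Delaunay (Section~\ref{ssec-alg}) is purely \emph{local}: $\alpha+\beta\le\pi$ for each pair of \emph{adjacent} triangles, and conditions (D) and regular boundary are local as well. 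The implication ``locally Delaunay $\Rightarrow$ empty circumcircles'' (Delaunay's lemma) holds for triangulations of a convex region, but here $\hat B$ is a triangulated polygon approximating an arbitrary simply connected domain, hence typically non-convex, and for non-convex polygons the implication is false: a vertex hidden behind a reflex part of $\partial\hat B$ can lie strictly inside a circumcircle without any local condition noticing. Concretely, remove the thin slit $[4.9,5.1]\times[5,10]$ from the square $[0,10]^2$, take on the left side of the slit the triangle with vertices $(4.9,10)$, $(4.9,5)$, $(2,8)$ (one boundary side, acute opposite angle, circumcenter $\approx(4.48,7.5)$, circumradius $\approx 2.53$), and place a vertex at $(5.1,7.5)$ on the right side of the slit; this vertex is at distance $\approx 0.62$ from the circumcenter, i.e.\ strictly inside the circumcircle, yet it is separated from the triangle by the slit, and one can complete this to a triangulation satisfying all hypotheses of the lemma, since no pair of adjacent triangles ever sees both objects. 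In such a configuration the circumcenter is strictly closer to the hidden vertex than to the vertices of its own triangle, so it is not a Voronoi vertex at all, it does not even lie in $\mathrm{Vor}(v)$ for the vertices $v$ of its triangle, and your key inclusion $(O,v]\subset\mathrm{Int}\,\mathrm{Vor}(v)$ fails. The lemma's conclusion is still true in this example, but your mechanism for proving it is not; repairing it would force you to use geodesic (constrained) Voronoi cells, which are not convex, so the convexity argument collapses too.

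For contrast, the paper's proof is purely local, which is exactly why it survives non-convexity: each kite has oriented area $|z_1-z_3|^2(\cot\alpha+\cot\beta)/4$ and each boundary triangle has oriented area $|z_1-z_3|^2\cot\alpha/4$; these are strictly positive by condition (D), the Delaunay condition, and the regular boundary condition; summing them over the edges of $B$ telescopes to exactly $\mathrm{Area}(\hat B)$; and since the union of these pieces contains $\hat B$ (the boundary of the union lies in $\partial\hat B$), pieces of positive total area equal to the area of a set they cover cannot overlap. If you want to keep a Voronoi-flavored proof, you would first have to prove a local-to-global Delaunay lemma adapted to this constrained setting, which is precisely what fails in general.
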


\begin{proof} (Cf. \cite[Proof of Lemma~5.4(2)$\Rightarrow$(1)]{Prasolov-Skopenkov-11}.) 
For each oriented 
edge ${z_1z_3}$ of the triangulation $B$ denote by $z_2$ and $z_4$ the circumcenters of the triangular faces bordering upon ${z_1z_3}$ from the left and the right, respectively; see Figure~\ref{fig-cot}. (If ${z_1z_3}$ is a boundary edge then we orient it counterclockwise along the boundary, thus only the point $z_2$ is well-defined).
The required formula is proved by a straightforward computation;
the nontrivial part of the lemma is to prove that the constructed graph $Q$ does not have self-intersections.

For that we are going to compare the sum of the areas of the quadrilaterals $z_1z_2z_3z_4$ (and also the triangles $z_1z_2z_3$ bordering upon boundary edges $z_1z_3$) with the area of their union.
Clearly, 
$\mathrm{Area}(z_1z_2z_3z_4)=|z_1-z_3|^2(\cot \alpha + \cot \beta)/4$ and $\mathrm{Area}(z_1z_2z_3)=|z_1-z_3|^2\cot \alpha/4$
for each interior and boundary edge $z_1z_3$, respectively. 
Since $B$ is Delaunay, has regular boundary, and satisfies condition (D) from Section~\ref{ssec-alg} it follows that these oriented areas are all positive. Expressing the area of each triangular face of $B$ as an algebraic sum of three terms of the form $\mathrm{Area}(z_1z_2z_3)$ we get $\sum \mathrm{Area}(z_1z_2z_3z_4)+\sum \mathrm{Area}(z_1z_2z_3)=\mathrm{Area}(\widehat{B})$, where the sums are over all the interior and boundary edges $z_1z_3$, respectively. Finally, 
$\bigcup z_1z_2z_3z_4\cup\bigcup z_1z_2z_3\supset \widehat{B}$ because $\partial(\bigcup z_1z_2z_3z_4\cup\bigcup z_1z_2z_3)\subset \partial \widehat{B}$. This implies that the quadrilaterals of the form $z_1z_2z_3z_4$ do not overlap, hence $Q$ does not have self-intersections.
\end{proof}

\begin{proof}[Proof of Corollary~\ref{cor-cotangent}] 
Let $Q_n$ be the orthogonal quadrilateral lattice given by Lemma~\ref{l-transformation} for the triangulation $B_n$.
Then $u_{B_n,g}\colon B_n^0\to\mathbb{R}$ is the restriction of a discrete harmonic function $Q_n^0\to \mathbb{R}$.
By Existence and Uniqueness Theorem~\ref{cl-dirichlet-problem} we have $u_{B_n,g}(z)=u_{Q_n,g}(z)$ for each $z\in B_n^0$.
Since $\{B_n\}$ is a nondegenerate uniform sequence 
approximating the domain $\Omega$ it follows that
$\{Q_n\}$ is too, but with condition (D) replaced by relaxed condition (D') from Remark~\ref{rem-relaxed}.
By Convergence Theorem~\ref{th-uniform-approx} and Remark~\ref{rem-relaxed-convergence} the corollary follows.
\end{proof}



Vice versa, the finite element method can be applied to establish convergence of discrete harmonic functions on quadrilateral lattices. Using the standard finite element described above one can approach only rhombic lattices (and also ``kite'' ones at the cost of establishing convergence only at the vertices of the graphs $B_n$ but not $W_n$).

The following \emph{nonconforming} finite element might be useful in the case of general quadrilateral lattices.
Given a function $u\colon Q^0\to\mathbb{R}$ define
its \emph{interpolation} $I_Q u\colon z_1z_2z_3z_4\to\mathbb{R}$
to be the linear function
on a face $z_1z_2z_3z_4$ of~$Q$
such that $[I_Q u](z_1)=u(z_1)$, $[I_Q u](z_3)=u(z_3)$, and $[I_Q u](z_2)-[I_Q u](z_4)=u(z_2)-u(z_4)$.
Combining such linear functions together we get a (discontinuous) function $I_Q u\colon \widehat{Q}\to\mathbb{R}$ on the union $\widehat{Q}$ of all the quadrilateral faces of~$Q$. 
%
Clearly, then
$E_Q(u)=E_{\widehat{Q}}(I_Q u)$.
An interesting observation is that
in the case when all the bounded faces of $Q$ are convex 
we have $E_Q(u)=\int_{\widehat{Q}}\nabla I_{B\cup Q} u\cdot \nabla I_{W\cup Q} u\, dxdy$, 
where $I_{B\cup Q} u,I_{W\cup Q} u\colon \widehat{Q}\to\mathbb{R}$
are piecewise linear extensions of the function $u\colon Q^0\to \mathbb{R}$ to the faces of the graphs $B\cup Q$ and $W\cup Q$, respectively.

\begin{problem} Give an effective approximation algorithm for finding the solution of the Dirichlet problem on a (nonorthogonal) quadrilateral lattice.
\end{problem}


\begin{problem} Estimate the rate of convergence in Convergence Theorem~\ref{th-uniform-approx}.
\end{problem}

\subsection{Physical interpretation}\label{ssec-phys}


Classical physical interpretation of complex analysis on orthogonal lattices uses direct-current networks \cite{Duffin-68} (for elementary introduction to networks see \cite{Dorichenko-Prasolov-Skopenkov, Skopenkov-Smykalov-Ustinov, Prasolov-Skopenkov-11}). Let us give a new physical interpretation for arbitrary quadrilateral lattices involving \emph{alternating}-current networks. The interpretation gives some motivation for our definitions and it is also interesting in itself.

Define the \emph{admittance} of an edge $z_1z_3\subset B$ by the formula
\begin{equation}\label{eq-def-conductance}
c(z_1z_3):=i\frac{z_2-z_4}{z_1-z_3},
\end{equation}
where $z_1z_2z_3z_4$ is the face containing $z_1z_3$ with the vertices listed clockwise. Clearly, this number has positive real part (and in case of an orthogonal lattice it is simply a positive number).

A graph $B$ with edge admittances having positive real parts can be considered as an \emph{alternating-current network}; see 
\cite[\S2.4]{Prasolov-Skopenkov-11} and \cite{Foster}.

Given a discrete analytic function $f\colon Q^0\to\mathbb{C}$,
define the \emph{voltage drop} $V(z_1z_3)$ and the \emph{current} $I(z_1z_3)$ on an oriented edge $z_1z_3\subset B$ by the formula
\begin{equation*}
V(z_1z_3):=f(z_1)-f(z_3),
\qquad I(z_1z_3):=if(z_2)-if(z_4),
\end{equation*}
where $z_1z_2z_3z_4$ is the face of the lattice $Q$ with the vertices listed clockwise.
\emph{Boundary voltage drops} are the differences $f(z_1)-f(z_3)$ for all pairs of consecutive boundary vertices $z_1, z_3\in B^0\cap \partial Q$. \emph{Boundary currents} (or \emph{incoming currents}) are the values $if(z_4)-if(z_2)$ for all the pairs of consecutive boundary vertices $z_2, z_4\in W^0\cap\partial Q$.
The \emph{voltage drop at a moment $t$} is the number $\Real \left(V(z_1z_3)\exp(it)\right)$; the \emph{current at the moment $t$} is defined analogously.

A reformulation of Existence and Uniqueness Theorem~\ref{cl-dirichlet-problem} is the following result.

\begin{corollary}\label{cor-physical}
Boundary voltage drops at the initial moment and boundary currents after one quarter of the period
uniquely determine all the voltage drops and currents in an alternating-current network at all the moments of time.
\end{corollary}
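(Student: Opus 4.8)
The plan is to set up a dictionary between the physical data and the real part $u:=\Real f$ of the discrete analytic function, and then invoke Existence and Uniqueness Theorem~\ref{cl-dirichlet-problem} directly. First I would unwind the definitions of the voltage drop at the initial moment and the current after a quarter period using $\exp(i\cdot 0)=1$ and $\exp(i\pi/2)=i$. For consecutive boundary vertices $z_1,z_3\in B^0\cap\partial Q$ the boundary voltage drop at $t=0$ is $\Real V(z_1z_3)=u(z_1)-u(z_3)$, while for consecutive boundary vertices $z_2,z_4\in W^0\cap\partial Q$ the boundary current after a quarter period is $\Real\bigl(i\,(if(z_4)-if(z_2))\bigr)=u(z_2)-u(z_4)$. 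Thus the prescribed data is exactly the collection of differences of $u$ between consecutive same-colour boundary vertices; equivalently it prescribes $u$ on $B^0\cap\partial Q$ up to one additive constant $c_B$ and on $W^0\cap\partial Q$ up to one additive constant $c_W$.

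Next I would reduce the assertion to a uniqueness statement about $f$. By discrete analyticity~\eqref{eq-def-analytic} and the definition~\eqref{eq-def-conductance} of the admittance one has the discrete Ohm law $I(z_1z_3)=c(z_1z_3)\,V(z_1z_3)$, so every current is determined by the corresponding voltage drop; hence it suffices to show that the boundary data determines all the voltage drops, i.e. determines $f$ up to replacing it by $f+\alpha$ on $B^0$ and $f+\gamma$ on $W^0$ for arbitrary constants $\alpha,\gamma\in\mathbb{C}$ (such a change alters neither $V$ nor $I$, and by~\eqref{eq-def-analytic} it preserves discrete analyticity). So let $f$ and $f'$ be two discrete analytic functions producing the same boundary data and put $h:=f'-f$; this $h$ is again discrete analytic, and by the previous paragraph $u_h:=\Real h$ is discrete harmonic and takes a constant value $c_B$ on $B^0\cap\partial Q$ and a constant value $c_W$ on $W^0\cap\partial Q$.

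The heart of the argument is then to show $\nabla_{\!Q}u_h\equiv 0$. The function equal to $c_B$ at every vertex of $B$ and to $c_W$ at every vertex of $W$ has vanishing $Q$-gradient, hence is discrete harmonic, and it has exactly the boundary values of $u_h$; by the uniqueness part of Existence and Uniqueness Theorem~\ref{cl-dirichlet-problem} it therefore coincides with $u_h$. Consequently $\nabla_{\!Q}u_h\equiv 0$, and since $\Imaginary h$ is conjugate to $u_h$ we get $\nabla_{\!Q}\Imaginary h=*\nabla_{\!Q}u_h\equiv 0$ by Lemma~\ref{cl-analyticity}. Repeating the connectivity argument from the proof of Convexity Principle~\ref{cl-min-energy} (a vanishing gradient forces a function to be constant along each of the graphs $B$ and $W$) shows that $h$ equals one constant on $B^0$ and another on $W^0$. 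Hence $f'$ and $f$ yield identical voltage drops and currents, which is the desired uniqueness; the existence half is obtained in the same way by prescribing the boundary values of $u$ and applying the existence part of the theorem.

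The step I expect to be the main obstacle is recognising that the prescribed boundary data pins down $u=\Real f$ only up to two independent additive constants, one per colour class, and that this two-parameter indeterminacy is precisely the kernel of the map $f\mapsto(V,I)$, so that it is harmless. The clean resolution is the observation above that a function which is constant on $B$ and (separately) constant on $W$ is discrete harmonic with vanishing gradient; feeding this into the uniqueness theorem converts the naive ``two missing constants'' into an honest proof. Note that nothing here uses orthogonality, so the corollary holds for arbitrary quadrilateral lattices, i.e. for any admittances~\eqref{eq-def-conductance} with positive real part.
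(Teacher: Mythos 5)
Your proof is correct and follows essentially the paper's own route: the paper gives no separate argument but presents Corollary~\ref{cor-physical} as a direct reformulation of Existence and Uniqueness Theorem~\ref{cl-dirichlet-problem}, which is exactly the dictionary you spell out (boundary data $\leftrightarrow$ Dirichlet boundary values of $u=\Real f$ up to one additive constant per colour class, then the theorem plus the conjugate-function and connectivity bookkeeping to kill the two-constant ambiguity). Your observation that the per-colour-constant indeterminacy lies precisely in the kernel of $f\mapsto(V,I)$ is the right way to make the ``reformulation'' rigorous, and your remark that orthogonality is never used matches the paper's setting.
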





The physical meaning of Convergence Theorem~\ref{th-uniform-approx} 
is that the voltage in a distributed direct-current network can be approximated by voltages in lumped direct-current 
networks.

This physical interpretation gives also one more motivation for the definition of energy from Section~\ref{ssec-energy}. The \emph{energy} of the network (\emph{dissipated per period}) is
\begin{equation}\label{eq-def-alternating}
E_Q(\mathrm{Re}f):=\Real \sum_{z_1z_3}\ V(z_1z_3)\bar I(z_1z_3)/2,
\end{equation}
where the sum is over all the edges $z_1z_3\subset B$.
One can see that the energy can indeed be expressed 
through $\Real f$ only, and formula~\eqref{eq-def-alternating} is equivalent to formula~\eqref{eq-def-energy}  in Section~\ref{ssec-energy} in the particular case of a discrete harmonic function $u=\Real f$.
A restatement of \cite[Claim~5.1]{Prasolov-Skopenkov-11} is  the following result.

\begin{energy-conservation-principle} \label{cl-energy-conservation}
For any discrete analytic function $f\colon Q^0\to \mathbb{C}$ we have
$$
E_Q(\mathrm{Re}f)=
\mathrm{Im}\sum_{z_3\in\partial Q\cap B^0}f(z_3)(\bar f(z_2)-\bar f(z_4))/2,
$$
where the sum is over all the boundary vertices $z_3\in\partial Q\cup B^0$ with boundary neighbors $z_2$ and $z_4$ in counterclockwise order.
\end{energy-conservation-principle}

\subsection{Probabilistic interpretation}\label{ssec-prob}

Probabilistic interpretation of discrete harmonic functions has been discussed already in \cite{Courant-friedrichs-Lewy-28}; see \cite{Skopenkov-Smykalov-Ustinov} for an elementary introduction.
The results of the present paper allow to generalize many estimates from \cite{Chelkak-Smirnov-08} to nonrhombic lattices. As an example let us prove convergence of the discrete harmonic measure to its continuous counterpart and sketch one particular problem.

Let $Q$ be an orthogonal lattice and let $B$ be one of the connected graphs obtained by joining the opposite vertices in each quadrilateral face of~$Q$. A \emph{random walk} on the vertices of~$B$ is defined as follows. At each moment of time the walker moves from his current position to one of the neighboring vertices with the probability proportional to the weights of the corresponding edges given by formula~(\ref{eq-def-conductance}). 

Let $E\subset\partial Q$ be an arc such that $\partial E\cap Q^0=\emptyset$.
The probability that a random walk starting at a vertex $z\in B^0$ (or $z\in W^0$) first hits the boundary $\partial Q$ at some vertex of the arc $E$ is called the \emph{discrete harmonic measure} $\omega(z,E,Q)$ of the arc $E$. Equivalently, $\omega(z,E,Q)=u_{Q,\chi_{E}}(z)$ is the solution of the Dirichlet boundary value problem on $Q$ for the characteristic function $\chi_{E}\colon\mathbb{C}\to \{0;1\}$ of the arc $E$. 
Its continuous counterpart is the \emph{harmonic measure} $\omega(z,E,\Omega)$ of an arc $E\subset\partial \Omega$, i.e., 
a continuous function $\mathrm{Cl}\Omega-\partial E\to\mathbb{R}$ harmonic inside $\Omega$ and equal to $\chi_{E}$ at the boundary $\partial\Omega-\partial E$. 

We say that a sequence of arcs $E_n$ \emph{approximates} an arc $E$, if the maximal distance from a point of $E_n$ to the arc $E$ and the maximal distance from a point of $E$ to the arc $E_n$ tend to zero as $n\to\infty$. A sequence of functions $u_{n}\colon Q^0_n\to\mathbb{R}$ \emph{converges} to a function $u\colon\Omega\to\mathbb{R}$ \emph{uniformly on each compact subset}, if for each compact set $K\subset \Omega$ we have $\max_{z\in K\cap Q_n^0}|u_n(z)-u(z)|\to 0$ as $n\to\infty$.

\begin{corollary}\label{cor-measure}
Let $\Omega\subset\mathbb{C}$ be a domain bounded by a continuous closed curve $\partial\Omega$ without self-intersections. 
Let $\{Q_n\}$ be a nondegenerate uniform sequence of 
orthogonal lattices  approximating the domain $\Omega$. 
Let $E_n\subset \partial Q_n$ be a sequence of arcs approximating an arc $E\subset \partial \Omega$ 
such that $\partial Q_n-E_n$ approximate $\partial \Omega-E$ as well.
Then the discrete harmonic measure $\omega(\cdot,E_n,Q_n)\colon Q_n^0\to\mathbb{R}$  converges to the harmonic measure $\omega(\cdot,E,\Omega)\colon \Omega\to\mathbb{R}$ uniformly on each compact subset of $\Omega$.
\end{corollary}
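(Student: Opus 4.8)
The plan is to deduce the corollary from Convergence Theorem~\ref{th-uniform-approx}, which already treats \emph{smooth} boundary data on an arbitrary (possibly nonsmooth) Jordan domain. The two new difficulties are that the datum $\chi_E$ jumps at the endpoints $\partial E=\{p_1,p_2\}$ of the arc, and that $\partial\Omega$ need not be smooth. Both are handled by trapping $\chi_E$ between two smooth functions that agree with $\chi_E$ away from $\partial E$, applying the convergence theorem to each of them, and estimating the remaining gap by the harmonic measure of a shrinking neighborhood of $\partial E$. Throughout I use that the discrete solution operator $g\mapsto u_{Q_n,g}$ is \emph{monotone}: if $g_1\le g_2$ at every boundary vertex, then $u_{Q_n,g_2}-u_{Q_n,g_1}$ is discrete harmonic with nonnegative boundary values, so by the minimum principle (Maximum Principle~\ref{cl-maximum-principle} applied to its negative) it is nonnegative on all of $Q_n^0$, i.e.\ $u_{Q_n,g_1}\le u_{Q_n,g_2}$.

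First, for each small $\delta>0$ I would choose smooth functions $g^-_\delta,g^+_\delta\colon\mathbb{C}\to[0,1]$ with $g^+_\delta\equiv1$ on the $\delta$-neighborhood of $E$ and $g^+_\delta\equiv0$ outside the $2\delta$-neighborhood of $E$, and symmetrically $g^-_\delta\equiv0$ on the $\delta$-neighborhood of $\partial\Omega-E$ and $g^-_\delta\equiv1$ outside its $2\delta$-neighborhood; such functions arise by mollifying the distance functions to $E$ and to $\partial\Omega-E$. On $\partial\Omega$ this yields $g^-_\delta\le\chi_E\le g^+_\delta$, and, since $\overline{E}\cap\overline{\partial\Omega-E}=\{p_1,p_2\}$, the difference $g^+_\delta-g^-_\delta$ is supported on $\partial\Omega$ in a boundary neighborhood $\mathcal N_\delta$ of $\partial E$ that shrinks to $\{p_1,p_2\}$ as $\delta\to0$. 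Because $E_n$ approximates $E$ and $\partial Q_n-E_n$ approximates $\partial\Omega-E$, for $n$ large every vertex of $E_n$ lies in the $\delta$-neighborhood of $E$ and every vertex of $\partial Q_n-E_n$ lies in the $\delta$-neighborhood of $\partial\Omega-E$; hence $g^-_\delta\le\chi_{E_n}\le g^+_\delta$ at all boundary vertices of $Q_n$. By monotonicity,
\begin{equation*}
u_{Q_n,g^-_\delta}\le\omega(\cdot,E_n,Q_n)\le u_{Q_n,g^+_\delta}\qquad\text{on }Q_n^0 .
\end{equation*}

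Next, the functions $g^\pm_\delta$ are smooth, so Convergence Theorem~\ref{th-uniform-approx} applies and gives $u_{Q_n,g^\pm_\delta}\rightrightarrows u_{\Omega,g^\pm_\delta}$ on $\mathrm{Cl}\,\Omega$. Representing the continuous solutions by harmonic measure, $u_{\Omega,g}(z)=\int_{\partial\Omega}g\,d\mu_z$, the inequalities $g^-_\delta\le\chi_E\le g^+_\delta$ give $u_{\Omega,g^-_\delta}\le\omega(\cdot,E,\Omega)\le u_{\Omega,g^+_\delta}$, since $\omega(z,E,\Omega)=\int_{\partial\Omega}\chi_E\,d\mu_z$. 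Combining the three sandwiches, on any compact $K\subset\Omega$ and for $n$ large,
\begin{equation*}
\bigl|\omega(\cdot,E_n,Q_n)-\omega(\cdot,E,\Omega)\bigr|\le\bigl(u_{\Omega,g^+_\delta}-u_{\Omega,g^-_\delta}\bigr)+\epsilon_n\qquad\text{on }K,
\end{equation*}
where $\epsilon_n:=\max_{\pm}\max_{\mathrm{Cl}\,\Omega}|u_{Q_n,g^\pm_\delta}-u_{\Omega,g^\pm_\delta}|\to0$ by the convergence theorem.

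The hard part will be to kill the remaining term, i.e.\ to show $\max_K\bigl(u_{\Omega,g^+_\delta}-u_{\Omega,g^-_\delta}\bigr)\to0$ as $\delta\to0$. Since $0\le g^+_\delta-g^-_\delta\le1$ with support in $\mathcal N_\delta$, we get $u_{\Omega,g^+_\delta}(z)-u_{\Omega,g^-_\delta}(z)=\int_{\partial\Omega}(g^+_\delta-g^-_\delta)\,d\mu_z\le\mu_z(\mathcal N_\delta)=\omega(z,\mathcal N_\delta,\Omega)$, the harmonic measure of a shrinking neighborhood of the two endpoints. Here the continuous theory and the Jordan property of $\partial\Omega$ enter: every boundary point of a Jordan domain is regular and carries zero harmonic measure, so as $\delta\to0$ the quantities $\omega(z,\mathcal N_\delta,\Omega)$ decrease to $\omega(z,\{p_1,p_2\},\Omega)=0$; being a decreasing family of continuous functions with continuous limit $0$, they converge to $0$ uniformly on the compact $K$ by Dini's theorem. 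Given $\eta>0$, I would first fix $\delta$ so small that $\max_K\bigl(u_{\Omega,g^+_\delta}-u_{\Omega,g^-_\delta}\bigr)<\eta/2$, and then take $n$ large enough that $\epsilon_n<\eta/2$; the displayed estimate then gives $\max_K|\omega(\cdot,E_n,Q_n)-\omega(\cdot,E,\Omega)|<\eta$, which is precisely the asserted uniform convergence on compact subsets of $\Omega$.
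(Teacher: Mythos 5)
Your proof is correct and follows essentially the same route as the paper's: sandwich the indicator function between smooth minorants/majorants, use Maximum Principle~\ref{cl-maximum-principle} for monotonicity of the discrete solution operator, apply Convergence Theorem~\ref{th-uniform-approx} to the smooth data, and absorb the remaining error into the harmonic measure of a shrinking neighborhood of $\partial E$, which vanishes uniformly on compacts by Dini's theorem. The only cosmetic difference is that the paper works with a single smooth minorant $g_\epsilon\le\chi_{E_n}$ and obtains the matching upper bound by applying the same one-sided estimate to the complementary arc via $\omega(z,\partial Q_n-E_n,Q_n)=1-\omega(z,E_n,Q_n)$, whereas you construct the second smooth function $g^+_\delta$ explicitly.
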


\begin{proof}[Proof of Corollary~\ref{cor-measure}]
For a set $X\subset \mathbb{C}$ denote by $N_\epsilon(X)$ its $\epsilon$-neighborhood in $\mathbb{C}$. 
For each 
$\epsilon>0$ take a smooth function 
$g_\epsilon\colon\mathbb{C}\to [0,1]$ equal to $1$ at the set $E-N_{2\epsilon}(\partial\Omega-E)$ and vanishing outside
the $\epsilon$-neighborhood of the set.

Take a compact set $K\subset\Omega$ and a number $\delta>0$.
It suffices to prove the following sequence of estimates for some $\epsilon$,
$n_0$ (depending on $K,\delta,\Omega,E,\{Q_m\},\{E_m\}$) and each $n>n_0$, $z\in K\cap Q^0_n$:
$$
\omega(z,E_n,Q_n)
\ge u_{Q_n,g_\epsilon}(z)
\ge u_{\Omega,g_\epsilon}(z)-\delta
\ge \omega(z,E,\Omega)-2\delta.
$$
Together with analogous estimate for
$\omega(z,\partial Q_n-E_n,Q_n)=1-\omega(z,E_n,Q_n)$
this implies the corollary.

Here the first estimate holds for large enough $n$ such that the Hausdorff distance between $\partial Q_n-E_n$ and $\partial \Omega-E$ is less than~$\epsilon$ by the inequality $\chi_{E_n}\left|_{\partial Q_n^0}\right.\ge g_\epsilon$ and Maximum Principle~\ref{cl-maximum-principle}. The second estimate holds for $n$ starting from some $n_0$ (depending on $\epsilon$ and $\delta$) by Convergence Theorem~\ref{th-uniform-approx}. By the Lindel\" of maximum principle, the Dini theorem, and  $\sigma$-additivity of the harmonic measure we have
$$
\omega(z,E,\Omega)-u_{\Omega,g_\epsilon}(z)\le \omega(z, N_{2\epsilon}(\partial\Omega - E)\cap E,\Omega)\stackrel{K}{\rightrightarrows} \omega(z, \bigcap_{m}N_{2^{-m}}(\partial\Omega - E)\cap E,\Omega)=0 
\quad\text{as} \quad\epsilon\to 0.
$$
This implies the last estimate above for some $\epsilon$ (depending on
$K$, $\delta$, $\Omega$, $E$).
\end{proof}

\begin{problem} 
Prove that the trajectories of loop-erased random walks on the orthogonal lattice converge to $\mathrm{SLE}_2$ curves in the scaling limit (see \cite{Chelkak-Smirnov-08, Lawler-Schramm-Werner-04} for the definitions).
\end{problem}

\subsection{Generalizations}



\begin{problem} \label{prob-general} Generalize Theorem~\ref{th-uniform-approx} to:
\begin{enumerate}[(1)]
    \item nonorthogonal quadrilateral lattices;
    \item nonuniform sequences, i.e., not satisfying condition (U) from Section~\ref{ssec-state} (for \emph{adaptive meshes});
    \item \label{prob-general-discontinuous} singular boundary values (for convergence of 
    Green's function, the Cauchy and the Poisson kernels, Abelian integrals);
    \item other types of boundary conditions;
    \item other Riemann surfaces; 
    \item higher dimensions;
    \item other elliptic PDE.
\end{enumerate}
\end{problem}

\begin{problem} Prove that under the assumptions of Convergence Theorem~\ref{th-uniform-approx} the gradient $\nabla_{\!Q_n} u_{Q_n,g}$ converges to $\nabla u_{\Omega,g}$ uniformly on each compact subset of $\Omega$.
\end{problem}













\begin{problem} Construct a sequence of quadrilateral lattices approximating a planar domain such that the solutions of the Dirichlet problem on the lattices do \emph{not} converge uniformly to the solution of the Dirichlet problem in the domain.
\end{problem}

\begin{problem} For which nonorthogonal lattices $Q$ Maximum Principle~\ref{cl-maximum-principle} remains true?
\end{problem}

\subsection*{Acknowledgements}

The author is grateful to A.~Bobenko, D.~Chelkak, C.~Mercat, S. Novikov, A.~Pakharev, S.~Tikhomirov, and A.~Ustinov for useful discussions. This work has been presented at the seminars of A. Bobenko, S. Novikov, Ya. Sinai, and S. Smirnov.



\bibliographystyle{elsarticle-num}

\begin{thebibliography}{99}

\bibitem{Alexa-Wardetzky-11} M. Alexa, M. Wardetzky, Discrete Laplacians on general polygonal meshes,
 ACM Trans. Graph. 30:4 (2011), 102:1--102:10.


\bibitem{Bobenko-etal-05} A.~I.~Bobenko, C.~Mercat, and Y.~B.~Suris, Linear and nonlinear theories of discrete analytic functions. Integrable structure and isomonodromic Green's function, J. Reine Angew. Math. \textbf{583} (2005), 117--161.

\bibitem{Bobenko-etal-09} A.~I.~Bobenko, C.~Mercat, and M.~Schmies, Period matrices of polyhedral surfaces. In: Computational Approach to Riemann Surfaces, A.~I.~Bobenko, C.~Klein (Eds), Lect. Notes Math. 2013,  Springer, Berlin, 2011, 213--226;  \url{http://arxiv.org/abs/0909.1305}.

\bibitem{Bobenko-etal-10} A.~I.~Bobenko, U.~Pinkall, and  B.~A.~Springborn, Discrete conformal maps and ideal hyperbolic polyhedra, preprint (2010); \url{http://arxiv.org/pdf/1005.2698v1}.

\bibitem{Bobenko-Springborn-07} A.~I.~Bobenko and B.~A.~Springborn, A discrete Laplace--Beltrami operator for simplicial surfaces, \textit{Discrete Comput. Geom.} \textbf{38} (2007), 740--756.




\bibitem{Braess-07} D.~Braess, Finite elements. Theory, fast solvers, and applications in elasticity theory, transl. by L.~L.~Schumaker, Cambridge Univ. Press, 2007.

\bibitem{Brandts-etal-11} J.~Brandts, A.~Hannukainen,
S.~Korotov, M.~K\v ri\v zek, \textrm{On angle conditions in the finite element method}, SeMA J \textbf{56} (2011), 81--95.





\bibitem{Chelkak-Smirnov-08} D.~Chelkak and S.~Smirnov,
\textrm{Discrete complex analysis on isoradial graphs},
\textit{Adv. Math.} 228 (2011), 1590-1630, \url{http://arxiv.org/abs/0810.2188v2}.

\bibitem{Ciarlet-78} P.~G.~Ciarlet, The finite element method for elliptic problems, North-Holland, Amsterdam, 1978. 

\bibitem{Ciarlet-Raviart-73} P.~G.~Ciarlet and P.-A.~Raviart, Maximum principle and uniform convergence for the finite element method, \textit{Computer Methods Appl. Mech. Engin.} \textbf{2} (1973), 17--31.




\bibitem{Courant-friedrichs-Lewy-28}
R.~Courant, K.~Friedrichs, H.~Lewy, \"Uber die partiellen Differenzengleichungen der mathematischen Physik, Math. Ann., \textbf{100}, (1928), 32--74. English transl.: IBM Journal (1967), 215--234.
Russian transl.: Russ.~Math.~Surveys \textbf{8} (1941), 125--160. \url{http://www.stanford.edu/class/cme324/classics/courant-friedrichs-lewy.pdf}






\bibitem{Duffin-59} R.~J.~Duffin, \textrm{Distributed and lumped networks}, \textit{J. Math. Mech.} \textbf{8:5} (1959), 793--826.

\bibitem{Duffin-68} R.~J.~Duffin, \textrm{Potential Theory on a Rhombic Lattice}, \textit{J. Combin. Theory} \textbf{5} (1968), 258--272.




\bibitem{Dynnikov-Novikov-03} I.~A.~Dynnikov and S.~P.~ Novikov, Geometry of the triangle equation on two-manifolds, \textit{Moscow Math. J.}, \textbf{3} (2003), 419--438.


\bibitem{Ferrand-44} J. Ferrand, Fonctions pr\'eharmoniques et fonctions pr\'eholomorphes, Bull. Sci. Math. \textbf{68} (1944), 152--180.




\bibitem{Foster} R.~M.~Foster, \textrm{Academic and Theoretical Aspects of Circuit Theory}, Proc. IRE \textbf{50:5} (1962), 866--871.








\bibitem{Hilderbrandt-etal-06} K.~Hildebrandt, K.~Polthier, M.~Wardetzky, \textrm{On the convergence of metric and geometric properties of polyhedral surfaces}, Geom. Dedicata \textbf{123} (2006), 89--112.

\bibitem{Hilderbrandt-etal-11} K.~Hildebrandt, K.~Polthier, On approximation of the Laplace--Beltrami operator and the Willmore energy of surfaces, Computer Graphics Forum \textbf{30} (2011), 1513--1520. 

\bibitem{Isaacs-41} R.
~Isaacs, A finite difference function theory, Univ. Nac. Tucum\'an. Revista A. \textbf{2} (1941), 177--201.




\bibitem{Kenyon-02} R.~Kenyon, \textrm{The Laplacian and Dirac operators on critical planar graphs},
    Invent.~Math.~\textbf{150:2} (2002), 409--439.



\bibitem{Lawler-Schramm-Werner-04} G.~F.~Lawler,
    O.~Schramm, W.~Werner, Conformal invariance of planar loop-erased random walks and uniform spanning trees, \textit{Ann. Probab.} \textbf{32:1B} (2004), 939--995.



\bibitem{Lovasz-04} L.~Lovasz, Discrete analytic functions: an exposition, In: Surv. Differ. Geom., IX, 241--273, Int. Press, Somerville, MA, 2004.

\bibitem{Lusternik-26}
L.~Lusternik,
 \"Uber einige Anwendungen der direkten Methoden in Variationsrechnung,
Sb Math+ 33:2 (1926), 173--202.

\bibitem{Mercat-01} C.~Mercat, Discrete Riemann surfaces and the Ising model, \textit{Comm. Math. Phys.} \textbf{218:1} (2001), 177--216.

\bibitem{Mercat-08} C.~Mercat, Discrete complex structure on surfel surfaces. In: Discrete Geometry for Computer Imagery, Coeurjolly et al. (Eds.), Lect. Notes Computer Sc. \textbf{4992} (2008), 153--164.




\bibitem{Prasolov-Skopenkov-11} M.~Prasolov and M.~Skopenkov, \textrm{Tilings by rectangles and alternating current}, \textit{J. Combin. Theory A} \textbf{118:3} (2011), 920--937, 
    \url{http://arxiv.org/abs/1002.1356}.






\bibitem{Saloff-Coste-97} L. Saloff-Coste, Some inequalities for superharmonic functions on graphs, Potential Analysis 6 (1997) 163--181.

\bibitem{Dorichenko-Prasolov-Skopenkov} M.~Skopenkov, M.~Prasolov, S.~Dorichenko, \textrm{Dissections of a metal rectangle}, Kvant \textbf{3} (2011), 10--16 (in Russian), \url{http://arxiv.org/abs/1011.3180}.

\bibitem{Skopenkov-Smykalov-Ustinov} M.~Skopenkov, V.~ Smykalov, A.~Ustinov, Random walks and electric networks, Matematicheskoe Prosveschenie 3rd ser. 16 (2012), 25--47, \url{http://www.mccme.ru/free-books/matprosh.html}.

\bibitem{Smirnov-10} S.~Smirnov, \textrm{Discrete Complex Analysis and Probability}, Proc. Intern. Cong. Math.
Hyderabad, India, 2010, \url{http://arxiv.org/abs/1009.6077}.







\bibitem{Wardetzky-etal-07} M.~Wardetzky, S.~Mathur,    F.~K\"aberer, E.~Grinspun, Discrete Laplace operators: no free lunch, Eurographics Symp. Geom. Processing, A.~Belyaev, M.~Garland (eds.), 2007.


\end{thebibliography}

\end{document}